\newlist{enumth}{enumerate}{1}
\setlist[enumth]{label=\emph{(\arabic*)}, ref=(\arabic*)}
\renewcommand{\mathcal}{\mathscr}
\def\setminus{\mathchoice
	{\mathbin{\vrule height .62ex width 1.61ex depth -.38ex}}
	{\mathbin{\vrule height .62ex width 1.61ex depth -.38ex}}
	{\mathbin{\vrule height .50ex width 0.85ex depth -.28ex}}
	{\mathbin{\vrule height .20ex width 0.570ex depth -.24ex}}
}
\DeclareMathSymbol{A}{\mathalpha}{operators}{`A}%
\DeclareMathSymbol{B}{\mathalpha}{operators}{`B}%
\DeclareMathSymbol{C}{\mathalpha}{operators}{`C}%
\DeclareMathSymbol{D}{\mathalpha}{operators}{`D}%
\DeclareMathSymbol{E}{\mathalpha}{operators}{`E}%
\DeclareMathSymbol{F}{\mathalpha}{operators}{`F}%
\DeclareMathSymbol{G}{\mathalpha}{operators}{`G}%
\DeclareMathSymbol{H}{\mathalpha}{operators}{`H}%
\DeclareMathSymbol{I}{\mathalpha}{operators}{`I}%
\DeclareMathSymbol{J}{\mathalpha}{operators}{`J}%
\DeclareMathSymbol{K}{\mathalpha}{operators}{`K}%
\DeclareMathSymbol{L}{\mathalpha}{operators}{`L}%
\DeclareMathSymbol{M}{\mathalpha}{operators}{`M}%
\DeclareMathSymbol{N}{\mathalpha}{operators}{`N}%
\DeclareMathSymbol{O}{\mathalpha}{operators}{`O}%
\DeclareMathSymbol{P}{\mathalpha}{operators}{`P}%
\DeclareMathSymbol{Q}{\mathalpha}{operators}{`Q}%
\DeclareMathSymbol{R}{\mathalpha}{operators}{`R}%
\DeclareMathSymbol{S}{\mathalpha}{operators}{`S}%
\DeclareMathSymbol{T}{\mathalpha}{operators}{`T}%
\DeclareMathSymbol{U}{\mathalpha}{operators}{`U}%
\DeclareMathSymbol{V}{\mathalpha}{operators}{`V}%
\DeclareMathSymbol{W}{\mathalpha}{operators}{`W}%
\DeclareMathSymbol{X}{\mathalpha}{operators}{`X}%
\DeclareMathSymbol{Y}{\mathalpha}{operators}{`Y}%
\DeclareMathSymbol{Z}{\mathalpha}{operators}{`Z}%
\renewcommand{\leq}{\leqslant}
\renewcommand{\geq}{\geqslant}
\newcommand{\uple}[1]{\text{\boldmath${#1}$}}
\newcommand{\Cc}{\mathbf{C}}
\newcommand{\Oo}{\mathbf{O}}
\newcommand{\Zz}{\mathbf{Z}}
\newcommand{\Rr}{\mathbf{R}}
\newcommand{\Gg}{\mathbf{G}}
\newcommand{\Ss}{\mathbf{S}}
\newcommand{\Qq}{\mathbf{Q}}
\newcommand{\Ff}{\mathbf{F}}
\newcommand{\mmu}{\boldsymbol{\mu}}
\newcommand{\expect}{\mathbf{E}}
\newcommand{\mods}[1]{\,(\mathrm{mod}\,{#1})}
\DeclareMathOperator{\Kl}{Kl}
\renewcommand{\hat}{\widehat}
\DeclareMathOperator{\diam}{diam}
\DeclareMathOperator{\wass}{\mathcal{W}}
\DeclareMathOperator{\Tr}{tr}
\DeclareMathOperator{\End}{End}
\newcommand{\eps}{\varepsilon}
\renewcommand{\rho}{\varrho}
\DeclareMathOperator{\SL}{SL}
\DeclareMathOperator{\Sp}{Sp}
\DeclareMathOperator{\SU}{SU}
\DeclareMathOperator{\Un}{U}
\DeclareMathOperator{\USp}{USp}
\DeclareMathSymbol{\gena}{\mathord}{letters}{"3C}
\DeclareMathSymbol{\genb}{\mathord}{letters}{"3E}
\theoremstyle{plain}
\newtheorem{theorem}{Theorem}[section]
\newtheorem{lemma}[theorem]{Lemma}
\newtheorem{corollary}[theorem]{Corollary}
\newtheorem{proposition}[theorem]{Proposition}
\theoremstyle{remark}
\newtheorem*{rem}{Remark}
\theoremstyle{definition}
\newtheorem{example}[theorem]{Example}
\newtheorem{remark}[theorem]{Remark}
\newcommand{\mcS}{\mathcal{S}}
\newcommand{\mcN}{\mathcal{N}}
\renewcommand{\geq}{\geqslant}
\renewcommand{\leq}{\leqslant}
\begin{document}

\title{Wasserstein metrics and quantitative equidistribution of
  exponential sums over finite fields}

\author{E. Kowalski}
\address{ETH Z\"urich -- D-MATH\\
  R\"amistrasse 101\\
  8092 Z\"urich\\
  Switzerland} 
\email{kowalski@math.ethz.ch}

\author{T. Untrau}
\address{Univ Rennes, CNRS, IRMAR - UMR 6625, F-35000 Rennes, France} 
\email{theo.untrau@ens-rennes.fr}

\date{\today} 

\subjclass[2010]{11K38,11L03,11T23, 49Q22}


\begin{abstract}
  The Wasserstein distance between probability measures on compact
  spaces provides a natural \enquote{invariant} quantitative measure of
  equidistribution, which is somewhat similar to the classical
  discrepancy appearing in Erd\H os--Turán type inequalities in the
  case of tori, but is a more intrinsic quantity.  We recall the basic
  properties of Wasserstein distances and present applications to
  quantitative forms of equidistribution of exponential sums in two
  examples, one related to our previous work on the equidistribution of
  ultra-short exponential sums, and the second a quantitative form of
  the equidistribution theorems of Deligne and Katz.
\end{abstract}

\maketitle
\setcounter{tocdepth}{1}

\tableofcontents

\section{Quantitative equidistribution and Wasserstein distances}

\subsection{Introduction}

Starting from the work of Weyl~\cite{weyl} about a century ago,
equidistribution has been a major theme of modern number theory.
Besides the qualitative aspect, there is considerable interest in having
quantitative versions of equidistribution theorems. In the most
classical case which concerns the equidistribution modulo~$1$ of a
sequence $(x_n)_{n\geq 1}$ of real numbers in $[0,1]$, equidistribution
is the property that
\begin{equation}\label{eq-equi}
  \lim_{N\to+\infty}\frac{1}{N} |\{n\leq N\,\mid\, a<x_n<b\}|=b-a
\end{equation}
for any real numbers~$a$ and~$b$, with $0\leq a<b\leq 1$. The Weyl
Criterion states that this is equivalent to the fact that the Weyl sums
\[
  W_h(N)=\frac{1}{N}\sum_{n\leq N}e(hx_n),\quad\quad h\in\Zz,\quad\quad
  e(z)=e^{2i\pi z}
\]
converge to~$0$ as $N\to +\infty$ for any non-zero integer~$h$.

The simplest quantitative forms of this equidistribution property are
simply quantitative estimates for the decay to~$0$ of the Weyl sums,
which should ideally be uniform in terms of $h$ and~$N$. Such
estimates also provide a quantitative version of~(\ref{eq-equi}), by
means of the classical \emph{Erd\H os--Turán inequality}: if we denote
\[
  E_N(a,b)=\frac{1}{N} |\{n\leq N\,\mid\, a<x_n<b\}|,
\]
then we have
\[
  \sup_{0\leq a<b\leq 1}|E_N(a,b)-(b-a)|\ll
  \frac{1}{T}+\sum_{0<|h|\leq T} \frac{|W_h(N)|}{|h|}
\]
for any parameter~$T\geq 1$, where the implied constant is absolute
(see~\cite[Th.\,III]{erdos-turan} for the original paper).

The left-hand side of this inequality is called the \emph{discrepancy}
of the sequence $(x_n)_{1\leq n\leq N}$, and is a natural measure of the
distance between the probability measure
\[
  \frac{1}{N}\sum_{n\leq N}\delta_{x_n}
\]
and the Lebesgue measure.

On the other hand, many problems give rise to multi-dimensional
equidistribution results, where the sequence $(x_n)$ takes values (for
instance) in $[0,1]^k$ for some integer~$k\geq 1$. Equidistribution
(with respect to the Lebesgue measure) means that
\[
  \lim_{N\to+\infty}\frac{1}{N} |\{n\leq N\,\mid\, a_i<x_{n,i}<b_i\text{
    for all } i\}|= \prod_i(b_i-a_i)
\]
whenever $0\leq a_i<b_i\leq 1$ for all~$i$, and is again equivalent to
the convergence towards $0$ of the Weyl sums
\[
  W_{\uple{h}}(N)=\frac{1}{N}\sum_{n\leq N}e(\uple{h}\cdot x_n)
\]
for $\uple{h}\in\Zz^k \setminus \{0\}$, where $x\cdot y$ denotes the
usual scalar product on~$\Rr^k$. The analogue of the Erd\H os--Turán
inequality is due to Koksma and states that the \enquote{box
  discrepancy}
\[
  \Delta_k=\sup_{\substack{0\leq a_i\leq b_i\leq 1\\1\leq i\leq k}}
  \Bigl| \frac{1}{N}|\{n\leq N\,\mid\, x_{n}\in
  \prod_{i=1}^k[a_i,b_i]\}| -\prod_{i=1}^k(b_i-a_i) \Bigr|
\]
satisfies
\begin{equation}\label{eq-erdos-turan-general}
  \Delta_k
  \ll \frac{1}{T}+\sum_{1\leq \|\uple{h}\|_{\infty}\leq T}
  \frac{1}{M(\uple{h})}|W_{\uple{h}}(N)|
\end{equation}
for any parameter~$T\geq 1$, where $M(\uple{h})=\prod\max(1,|h_i|)$ and
the implied constant depends only on~$k$ (see e.g.
\cite[Th. 1.21]{dt}).

However, this inequality is often unsatisfactory when~$d\geq 2$, due to
its lack of \enquote{invariance}. By this, we mean that if we apply to
the sequence $(x_n)$ a continuous map $f\colon [0,1]^k\to [0,1]^{k'}$,
or $f\colon [0,1]^k\to\Cc$, then the equidistribution of~$(x_n)$ with
respect to some measure~$\mu$ implies that~$(f(x_n))$ is equidistributed
with respect to the image measure $f_*(\mu)$, and one naturally wants to
have a quantitative version of this secondary convergence statement. But
this cannot be deduced from~(\ref{eq-erdos-turan-general}) without some
analysis of the way the map~$f$ transforms boxes with sides parallel to
the coordinate axes. Note that this is a problem even if~$f$ is (say) a
linear transformation in~$\SL_k(\Zz)$, and~$\mu$ is the Lebesgue
measure.  In particular, if the space $[0,1]^k$ arises abstractly
without specific choices of coordinates (which may well happen), then
the Erd\H os--Turán--Koksma inequality imposes the choice of a
coordinate system, which might be artificial and awkward for other
purposes. Moreover, if we consider the functional definition of
equidistribution, namely the fact that
\[
  \lim_{N\to +\infty} \frac{1}{N}\sum_{n\leq
    N}\varphi(x_n)=\int_{[0,1]^k}\varphi(x)dx
\]
for any continuous function $\varphi\colon [0,1]^k\to \Cc$, it is
difficult to pass from~(\ref{eq-erdos-turan-general}) to quantitative
bounds for the difference
\[
  \Bigl|\frac{1}{N}\sum_{n\leq
    N}\varphi(x_n)-\int_{[0,1]^k}\varphi(x)dx\Bigr|,
\]
which may be equally relevant to applications.

\subsection{Wasserstein distances}

From the probabilistic point of view, the goal of measuring the distance
between probability measures is very classical, and has appeared in many
forms since the beginning of modern probability theory. In recent years,
there has been increasing interest in using \emph{Wasserstein
  metrics}\footnote{\ Also called Monge--Kantorovich distances, or
  Kantorovich--Rubinstein metrics, see~\cite[p.\,206]{villani}.} for
this purpose. Indeed, these distances have considerable impact in
probability theory, statistics, the theory of partial differential
equations and numerical analysis.  However, they only appeared
relatively recently in works related to equidistribution questions,
including in works of Bobkov and Ledoux~\cite{bl_gaussian},
Steinerberger~\cite{steinerberger}, Brown and
Steinerberger~\cite{brown-steinerberger}, Graham~\cite{graham} and
Borda~\cite{borda_bernoulli,borda_berry_esseen}. Although not yet
well-established in the analytic number theory community, we believe
that the Wasserstein metrics provide a particularly well-suited approach
to quantitative equidistribution.

To explain this, we recall the definition of the Wasserstein distances
on a metric space $(M,d)$. Let~$p\geq 1$ be a real number.\footnote{\
  Later, $p$ will be used to also denote prime numbers, but there
  should be no ambiguity since only $\wass_1$ will appear then.}  For
Borel probability measures $\mu$ and~$\nu$ on $M$, let $\Pi(\mu,\nu)$
be the set of probability measures on $M\times M$ with marginals $\mu$
and~$\nu$ (which is not empty since $\mu\otimes\nu$ belongs to it).
The $p$-Wasserstein distance between~$\mu$ and~$\nu$ is then defined
by
\[
  \wass_p(\mu,\nu)=\inf_{\pi\in \Pi(\mu,\nu)}\Bigl(
  \int_{M \times M}d(x,y)^pd\pi(x,y)\Bigr)^{1/p}.
\]

\begin{remark}
  (1) This quantity depends on the choice of the metric~$d$. When
  needed, we will write $\wass_p^{(d)}(\mu,\nu)$ to indicate which
  metric is used.

  (2) In probabilistic terms, we have
  \[
    \wass_p(\mu,\nu)^p=\inf_{\substack{X\sim
        \mu\\Y\sim\nu}}\expect(d(X,Y)^p),
  \]
  where the infimum is taken over families of random variables $(X,Y)$
  with values in~$M$ such that the law of~$X$ is~$\mu$ and that of~$Y$
  is~$\nu$.
\end{remark}

The Wasserstein distances are particularly important in the theory of
optimal transport: indeed, they measure the cost of ``moving'' $\mu$
to $\nu$ (see, e.g. the book~\cite{villani} of Villani for an
introduction to optimal transport). Crucially from our point of view,
the definition of $\wass_p$ is intrinsic and does not suffer from the
same invariance issues as the box discrepancy. The key points are the
following:
\begin{enumerate}
\item the Wasserstein distances are metrics on the set of probability
  measures on~$M$, and if~$(M,d)$ is compact, then the topology that
  they define on this set is the topology of convergence in law;
\item the Wasserstein metrics satisfy simple inequalities under
  various operations, such as pushforward by a Lipschitz map;
\item the Wasserstein metric $\wass_1$ admits a very clean functional
  interpretation, known as Kantorovich--Rubinstein duality;
\item the Wasserstein metric $\wass_1$ also satisfies inequalities in
  terms of Weyl sums (in contexts much more general than that of
  $M=(\Rr/\Zz)^d$ above) which are comparable
  to~(\ref{eq-erdos-turan-general}).
\end{enumerate}

We summarize these basic properties.

\begin{theorem}[Wasserstein distance properties]\label{th-wass}
  Let $(M,d)$ be a complete separable metric space.

  \begin{enumth}
  \item If~$M$ is compact then, for any fixed real number $p\geq 1$, the
    Wasserstein metric is a metric on the space of probability measures
    on~$M$, and the topology it defines is the topology of convergence
    in law, i.e., $\wass_p(\mu_n,\mu)\to 0$ if and only if, for all
    (bounded) continuous functions $f\colon M\to\Cc$, we have
    \[
      \lim_{n\to+\infty}\int_M fd\mu_n=\int_Mfd\mu.
    \]
    
  \item For probability measures $\mu$ and~$\nu$ on $M$, and for any
    real numbers $q\geq p\geq 1$, we have
    \[
      \wass_p(\mu,\nu)\leq \wass_{q}(\mu,\nu)
    \]
    and if~$M$ has finite diameter, then
    \begin{equation}\label{eq-pq-bound}
      \wass_q(\mu,\nu)\leq \diam(M)^{1-p/q}\wass_p(\mu,\nu)^{p/q}.
    \end{equation}
        
  \item Let~$(N,\delta)$ be a metric space. Let~$c\geq 0$ be a real
    number and let~$f\colon M\to N$ be a $c$-Lipschitz map. For any
    probability measures~$\mu$ and~$\nu$ on~$M$, we have
    \[
      \wass_p(f_*\mu,f_*\nu)\leq c\wass_p(\mu,\nu).
    \]

  \item If $N\subset M$ is a closed subset with inclusion
    $i\colon N\to M$, then for probability measures~$\mu$ and~$\nu$
    on~$N$, we have
    \[
      \wass_p(i_*\mu,i_*\nu)=\wass_p(\mu,\nu),
    \]
    where the right-hand side is a Wasserstein distance on~$N$ with
    the induced metric from~$M$.

  \item Let~$(N,\delta)$ be a complete separable metric
    space. Let~$\Delta$ be a metric on~$M\times N$ such that there
    exists~$c>0$ such that
    \[
      \Delta((x,y),(x',y'))\leq c(d(x,x')+\delta(y,y'))
    \]
    for $(x,y)$ and~$(x',y')$ in $M\times N$. For any probability
    measures $\mu$ and~$\nu$ on~$M$, $\mu'$ and $\nu'$ on~$N$, we have
    \[
      \wass_p^{(\Delta)}(\mu\otimes\mu',\nu\otimes\nu')
      \leq c2^{1/q}(\wass_p(\mu,\nu)+\wass_p(\mu',\nu'))
    \]
    for $p\geq 1$, where~$q\geq 1$ is the dual exponent with
    $1/p+1/q=1$, and we use the convention $2^{1/\infty}=1$.
    
  \item For probability measures $\mu$ and~$\nu$ on $M$, we have
    \[
      \wass_1(\mu,\nu)=\sup_{u}\Bigl|\int_{M}ud\mu- \int_{M}ud\nu\Bigr|
    \]
    where the supremum is over functions~$u\colon M\to \Rr$ which are
    $1$-Lipschitz.

  \item Suppose that~$M=(\Rr/\Zz)^k$ with its standard metric for some
    integer~$k\geq 1$. For any probability measures $\mu$ and $\nu$
    on~$M$ and for all $T >0$, we have
    \begin{equation}\label{eq-bl}
      \wass_1(\mu,\nu)\leq \frac{4\sqrt{3}\sqrt{k}}{T} +\Bigl(\sum_{1\leq
        |\uple{h}|_{\infty}\leq T} \frac{1}{\|\uple{h}\|^2}
      |\widehat{\mu}(\uple{h})-\widehat{\nu}(\uple{h})|^2
      \Bigr)^{1/2},
    \end{equation}
    where for a probability measure $\mu$ on~$M$ and
    $\uple{h}\in\Zz^k$, we denote by $\widehat{\mu}(\uple{h})$ the
    Fourier coefficient of~$\mu$, i.e.
    \[
      \widehat{\mu}(\uple{h})=\int_{M}e(\uple{h}\cdot x)d\mu(x).
    \]
  \end{enumth}
\end{theorem}

\begin{proof}
  (1) This is proved, e.g., in Villani's
  book~\cite[Th.\,7.3,\,Th.\,7.12]{villani}.
  
  (2) The first inequality follows easily from H\"older's inequality and
  the definition of the Wasserstein metric
  (see~\cite[7.1.2]{villani}). For~(\ref{eq-pq-bound}),
  let~$\pi\in\Pi(\mu,\nu)$; then we obtain
  \[
    \int_{M\times M}d(x,y)^qd\pi\leq \Bigl(\sup_{(x,y)\in M\times
      M}d(x,y)\Bigr)^{q-p} \int_{M\times M}d(x,y)^pd\pi
    \leq\diam(M)^{q-p}\wass_p(\mu,\nu)^p,
  \]
  hence the result.

  (3) This is also a formal consequence of the definition and the fact
  that, given $\pi\in \Pi(\mu,\nu)$, we have
  $(f \times f)_*\pi\in \Pi(f_*\mu,f_*\nu)$. Then, for
  $\pi\in \Pi(\mu,\nu)$, we get
  \[
    \int_{N\times N}\delta(x,y)^pd(f\times f)_*\pi=\int_{M \times M}
    \delta(f(x),f(y))^pd\pi \leq c^p\int_{M \times M}d(x,y)^pd\pi,
  \]
  and taking the infimum over~$\pi\in \Pi(\mu,\nu)$ gives the
  inequality.

  (4) This follows from (3) and the fact that any measure in
  $\Pi(i_*\mu,i_*\nu)$ has support in~$N\times N$, hence is of the form
  $(i\times i)_*\pi$ for some $\pi\in \Pi(\mu,\nu)$.

  (5) This is also elementary: if $\pi\in\Pi(\mu,\nu)$ and
  $\pi'\in \Pi(\mu',\nu')$, then
  $\pi\otimes \pi'\in \Pi(\mu\otimes\mu',\nu\otimes\nu')$, with
  \[
    \int_{(M\times N)^2}\Delta((x,y),(x',y'))^pd(\pi\otimes\pi') \leq
    c^p\int_{(M\times N)^2}(d(x,x')+\delta(y,y'))^pd(\pi\otimes\pi')
  \]
  by assumption. Using the H\"older inequality in the form
  $(a+b)^p\leq 2^{p/q}(a^p+b^p)$ for $a$, $b\geq 0$, we obtain
  \[
    \int_{(M\times N)^2}\Delta((x,y),(x',y'))^pd(\pi\otimes\pi')\leq
    c^p2^{p/q}\Bigl( \int_{M^2} d(x,x')^pd\pi +
    \int_{N^2}\delta(y,y')^pd\pi'\Bigr),
  \]
  hence the result.

  (6) This is much deeper and is a special case of the
  Kantorovich--Rubinstein duality (see,
  e.g.,~\cite[Th.\,1.14]{villani}).

  (7) This is (essentially) an application of a statement due to
  Bobkov and Ledoux~\cite{bl_gaussian}. For completeness, we will give
  a proof in the Appendix starting on page~\pageref{sec: appendix proof}.
\end{proof}

Thus, as an alternative to the classical
inequality~(\ref{eq-erdos-turan-general}), we have the following
corollary.

\begin{corollary}\label{cor-wasserstein}
  Let~$k\geq 1$ be an integer.  Let $(x_n)_{n\geq 1}$ be a sequence in
  $(\Rr/\Zz)^k$. Denote by $\lambda_k$ the Lebesgue measure on
  $(\Rr/\Zz)^k$, and define the probability measures
  \[
    \mu_N=\frac{1}{N}\sum_{n=1}^N\delta_{x_n},
  \]
  for $N\geq 1$.  For any~$N\geq 1$ and any $T > 0$, we have
  \[
    \wass_1(\mu_N,\lambda_k)\leq \frac{4\sqrt{3}\sqrt{k}}{T}
    +\Bigl(\sum_{1\leq |\uple{h}|_{\infty}\leq T}
    \frac{1}{\|\uple{h}\|^2} |W_{\uple{h}}(N)|^2 \Bigr)^{1/2}.
  \]
\end{corollary}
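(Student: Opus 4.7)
The plan is to deduce this corollary directly from part~\emph{(7)} of Theorem~\ref{th-wass}, which is precisely the Fourier-analytic upper bound on $\wass_1$ on the torus $(\Rr/\Zz)^k$. I would take $\mu = \mu_N$ and $\nu = \lambda_k$ in that inequality, and then simply identify the Fourier coefficients that appear on the right-hand side with the Weyl sums.

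First I would compute the two families of Fourier coefficients. By the definition of $\mu_N$ as a normalized sum of Dirac masses, for every $\uple{h} \in \Zz^k$,
\[
\widehat{\mu_N}(\uple{h}) = \int_{M} e(\uple{h}\cdot x)\, d\mu_N(x) = \frac{1}{N}\sum_{n=1}^{N} e(\uple{h}\cdot x_n) = W_{\uple{h}}(N),
\]
which matches the definition of the Weyl sums given in the introduction. For the Lebesgue measure, orthogonality of characters gives $\widehat{\lambda_k}(\uple{h}) = 0$ for every nonzero $\uple{h} \in \Zz^k$. Hence, on the range $1 \leq |\uple{h}|_{\infty} \leq T$ of the sum in~\eqref{eq-bl}, one has $\widehat{\mu_N}(\uple{h}) - \widehat{\lambda_k}(\uple{h}) = W_{\uple{h}}(N)$, and substituting into~\eqref{eq-bl} gives exactly the stated bound.

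There is essentially no obstacle here: the content has been fully absorbed into Theorem~\ref{th-wass}\emph{(7)}, whose proof is deferred to the appendix and invokes the Bobkov--Ledoux estimate. The only thing worth double-checking is that the convention for the dot product and for $e(\cdot)$ used to define $W_{\uple{h}}(N)$ in the introduction agrees with the one implicit in the definition of $\widehat{\mu}(\uple{h})$ in Theorem~\ref{th-wass}\emph{(7)}, which it does. I would therefore present the corollary as a one-line application, with the brief Fourier-coefficient computation above as justification.
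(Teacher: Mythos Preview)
Your proposal is correct and matches the paper's approach: the corollary is stated immediately after Theorem~\ref{th-wass} with no separate proof, as it is a direct specialization of part~(7) with $\mu=\mu_N$ and $\nu=\lambda_k$, using $\widehat{\mu_N}(\uple{h})=W_{\uple{h}}(N)$ and $\widehat{\lambda_k}(\uple{h})=0$ for $\uple{h}\neq 0$.
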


\begin{remark}
  The Bobkov--Ledoux inequality was generalized by
  Borda~\cite{borda_berry_esseen} to arbitrary connected compact Lie
  groups. Since it requires extra notation and setup, we postpone the
  statement to Section~\ref{sec-deligne}, where it will be used to
  provide quantitative forms of equidistributions theorems for
  exponential sums over finite fields.
\end{remark}

We finally remark that for measures on~$\Rr$ (with the usual distance),
there is an ``explicit'' formula for the Wasserstein distances.

\begin{proposition}\label{pr-dim1}
  Suppose that~$M=\Rr$ with the euclidean distance. For any~$p\geq 1$
  and any probability measures~$\mu$ and~$\nu$ on~$\Rr$, we have
  \[
    \wass_1(\mu,\nu)=\int_{\Rr}
    |F_{\mu}(x)-F_{\nu}(x)|dx,\quad
    \wass_p(\mu,\nu)=\Bigl(\int_{0}^1
    |F^{-1}_{\mu}(x)-F^{-1}_{\nu}(x)|^pdx\Bigr)^{1/p},
  \]
  where
  \[
    F_{\mu}(x)=\mu(\mathopen]-\infty,x]),\quad\quad
    F_{\mu}^{-1}(t)=\inf\{x\in\Rr\,\mid\, F(x)>t\}
  \]
  are the cumulative distribution function of~$\mu$ and its
  generalized inverse, and similarly for~$\nu$.
\end{proposition}

This follows, e.g., from~\cite[Th.\,2.18,\,
Remark\,2.19\,(ii)]{villani}.

\subsection{Outline of the paper and further questions}

The remainder of this paper is structured as follows.  In
Section~\ref{sec-example}, we give a very simple illustration of the
optimal transport interpretation of Wasserstein distance in the simplest
possible case; this section may safely be skipped, but we believe that
it can provide some first insight for readers not yet experienced with
Wasserstein metrics. The main results are contained in
Sections~\ref{sec-ultra-short} and~\ref{sec-deligne}, which can be read
independently, and where we provide examples of quantitative arithmetic
equidistribution statements measured using Wasserstein distances. The
first refines our earlier work~\cite{ultrashort} on exponential sums
over roots of polynomials over finite fields (generalizing Gaussian
periods), while the second gives a quantitative version of
equidistribution theorems for trace functions over finite fields
(Deligne's Theorem~\cite{weil2}, as well as Katz's
Theorem~\cite{mellin}). We refer to the introductions of these separate
sections for the precise statements.

We also point out that a recent preprint of Cornelissen, Hokken and
Ringeling~\cite[Th.\,B]{chr} gives an application of Wasserstein
distance for a concrete arithmetic question (estimating asymptotically
the Mahler measure of certain Gaussian periods), related to the
content of Section~\ref{sec-ultra-short} (and using some of our
results).

We conclude this section with some questions which appear natural to us:
\begin{enumerate}
\item Is there (in general, or in specific cases like those of
  Sections~\ref{sec-ultra-short} and~\ref{sec-deligne}) some arithmetic
  interpretation of the measures on the relevant product space
  $M\times M$ which ``compute'' the Wasserstein distance? (Optimal
  measures are known to always exist in our setting, although they may
  not be unique; see, e.g.,~\cite[Th.\,1.3]{villani}.)
\item In a similar perspective, are there interesting arithmetic
  applications of the interpretation of the Wasserstein metric in
  terms of optimal transport?  For instance, the
  paper~\cite{borda_bernoulli} of Borda shows how assumptions of
  suitably ``small'' Wasserstein distances lead to ergodic-theoretic
  statements for random walks on compact groups, by exploiting the
  transport definition itself.
\item In arithmetic applications such as those discussed later, can one
  also give upper bounds for the $p$-Wasserstein distance for values
  $p\not=1$? The ``trivial'' estimate~(\ref{eq-pq-bound}),
  provides the bound
  \[
    \wass_p(\mu,\nu)\leq \diam(M)^{1-1/p}\wass_1(\mu,\nu)^{1/p},
  \]
  for $p\geq 1$, but one can hope that this can be improved (or it would
  be interesting in applications to know that this is not the case).
  
  The case $p=2$ seems particularly interesting in view of its
  properties in optimal transport (see for
  instance~\cite[Ch.\,2,\,8]{villani}).

  Although there is some form of duality for~$p>1$
  (see~\cite[Th.\,1.3]{villani}), this does not seem to translate in a
  simple functional description of the $p$-Wasserstein distance, so that
  estimates in terms of these metrics have a different flavor as those
  for the $1$-Wasserstein distance.
\item Wasserstein metrics make sense in considerable generality,
  including certain infinite-dimensional situations, such as separable
  Banach spaces. There are equidistribution theorems of arithmetic
  nature in such settings, one example being the convergence theorem
  for Kloosterman paths of Kowalski and Sawin~\cite{kloospaths}, where
  the underlying space is the Banach space $\mathcal{C}([0,1])$. It
  would be interesting to have a quantitative version of such a
  statement in terms of Wasserstein metrics.
\end{enumerate}

\subsection*{Acknowledgements}

E.K. was partially supported by the joint ANR-SNF project
``Equi\-distribution in Number Theory'', FNS grant number 10.003.145 and
ANR-24-CE93-0016.

We thank G. Tenenbaum for suggesting that the properties of the Weyl
sums in our equidistribution problem would lead to strong quantitative
versions, and C. Pilatte for suggesting to present the results of
Section~\ref{sec-ultra-short} in terms of finite sets of algebraic
integers. We are also thankful to B. Borda for pointing out the two
versions of the inequality of Bobkov and Ledoux and for very helpful
comments on an earlier version of this article. We thank P. Nelson for
discussions concerning representations of compact Lie groups.  The
second author also wishes to thank I.  Shparlinski for sending an
unpublished note that contained interesting ideas and led him to study
the question of Section~\ref{sec: growing d}. He would also like to
thank F. Jouve, G. Ricotta and R. Tichy for encouragements, as well as
F. Bolley, T.  Cavallazzi, B. Huguet and T. Modeste for taking the time
to answer his questions on probability theory.

\section{A simple example}\label{sec-example}

For illustration, it is worth looking concretely at the meaning of the
Wasserstein distance between a measure of the type
\[
  \mu_N=\frac{1}{N}\sum_{i=1}^N\delta_{x_i}
\]
on $\Rr/\Zz$ and the uniform Lebesgue measure~$\lambda$.\footnote{\ The
  computation also follows from Proposition~\ref{pr-dim1}, but it is
  still worth looking at it concretely.} We assume for simplicity that
the $x_i$'s are different, and
denote~$\mathcal{X}=\{x_i\,\mid\, 1 \leq i \leq N\}$. Any measure
$\pi\in \Pi(\mu_N,\lambda)$ must be supported in
$\mathcal{X}\times \Rr/\Zz\subset (\Rr/\Zz)^2$, since $\mathcal{X}$ is
the support of~$\mu_N$. Hence we may write
\[
  \pi=\frac{1}{N}\sum_{i=1}^N\delta_{x_i}\otimes \pi_i,
\]
for some probability measure~$\pi_i$ on $\Rr/\Zz$. Any such measure
has projection on the first coordinate equal to~$\mu_N$. On the
other hand, the projection on the second coordinate is the combination
\[
  \frac{1}{N}\sum_{i=1}^N\pi_i,
\]
and thus $\pi\in\Pi(\mu_N,\lambda)$ is equivalent with
\[
  \frac{1}{N}\sum_{i=1}^N\pi_i=\lambda.
\]

In particular, suppose that~$\pi_i=f_i(y)dy$ for some measurable
function $f_i\geq 0$ with integral equal to~$1$. Then the condition
above becomes
\[
  \frac{1}{N}\sum_{i=1}^Nf_i=1.
\]

The corresponding integral for the $p$-Wasserstein distance is
\[
  \int_{(\Rr/\Zz)^2}d(x,y)^pd\pi=
  \frac{1}{N}\sum_{i=1}^N \int_{\Rr/\Zz}d(x_i,y)^pf_i(y)dy.
\]


As a simple illustration, let~$N\geq 2$ and consider~$x_i=(i-1)/N$
for $1\leq i\leq N$. Let~$\varphi_i$ be the indicator function of
the (image modulo $\Zz$ of the) interval
\[
  \Bigl[\frac{i-1}{N}-\frac{1}{2N},
  \frac{i-1}{N}+\frac{1}{2N}\Bigr[,
\]
and let~$f_i=N\varphi_i$. Then $f_i$ has integral~$1$, and
\[
  \sum_i f_i=N
\]
since any $x\in\Rr/\Zz$ belongs to a single interval, where the
corresponding $f_i$ takes the value~$N$. For the measure~$\pi$
described above, using the usual circular distance on~$\Rr/\Zz$, we
get
\[
  \int_{(\Rr/\Zz)^2}d(x,y)^pd\pi= \frac{1}{N}\sum_{i=1}^N
  \int_{\Rr/\Zz}d(x_i,y)^pf_i(y)dy=
  \sum_{i=1}^N\int_{(i-1)/N-1/(2N)}^{(i-1)/N+1/(2N)}
  \Bigl|\frac{i-1}{N}-y\Bigr|^pdy.
\]

By a simple integration, this is
\[
  \frac{2N}{p+1}\Bigl(\frac{1}{2N}\Bigr)^{p+1}=
  \frac{1}{(p+1)2^p}\frac{1}{N^p},
\]
proving that
\[
  \wass_p\Bigl(\frac{1}{N}\sum_{i=1}^N\delta_{(i-1)/N},\lambda\Bigr)
  \leq \frac{1}{2(p+1)^{1/p}}\frac{1}{N}\leq\frac{1}{2N}
\]
for all~$p\geq 1$. For $p=1$, this result is of the same quality as
the outcome of Corollary~\ref{cor-wasserstein}, since in this case the
Weyl sum $W_h(N)$ is zero unless $N\mid h$, in which case it is equal
to~$1$, so that the estimate in loc. cit. is
\[
  \wass_1(\mu_N,\lambda)\leq \frac{4\sqrt{3}}{T}+
  \Bigl(\sum_{\substack{1\leq |h|\leq T\\ N\mid
      h}}\frac{1}{|h|^2}\Big)^{1/2} \ll \frac{1}{T}+\frac{1}{N},
\]
and taking $T=N$ gives the result. This is of course comparable to the
well-known fact that the discrepancy of~$N$ equally-spaced points is of
size $1/N$.

Note that as a consequence of Theorem~\ref{th-wass}, (5), we deduce
also that for the uniform measure~$\mu_{N}^{(k)}$ on the finite grid
\[
  \{0, 1/N,\ldots, (N-1)/N\}^k\subset (\Rr/\Zz)^k,
\]
we have
\[
  \wass_p(\mu_N^{(k)},\lambda_k)\ll \frac{1}{N}
\]
where the implied constant depends on~$k$ only (and the Wasserstein
distance is computed for the standard metric).  Here again, taking
$T=N$ in Theorem~\ref{th-wass}, (7) gives a bound of the same shape
for $p=1$.

\section{Ultra-short sums of trace functions}\label{sec-ultra-short}

\subsection{Statement of the result}

Our previous paper~\cite{ultrashort} considers an equidistribution
problem on a torus for which the quantitative features of the
Wasserstein metric are particularly useful.  We considered there the
distribution properties of sums of the form
\begin{equation}\label{eq-sums}
  S_g(q,a) = \sum_{\substack{x \in \Ff_q \\ g(x) \equiv 0 \mods{q}}}^{}
  e\Bigl(\frac{ax}{q}\Bigr),
\end{equation}
where $g \in \Zz[X]$ is a fixed monic polynomial and $q$ is a prime
number totally split in the splitting field of~$g$ (subject to suitable
conditions). As suggested by C. Pilatte, it is equally straightforward
to formulate the results in terms of arbitrary finite sets of algebraic
integers, and we will do so.

Thus let $Z\subset \Cc$ be a finite set of algebraic integers. We denote
by $K_Z=\Qq(Z)$ the number field generated by~$Z$ and by $\Oo_Z$ the
ring of integers of~$K_Z$. For any prime ideal $p\subset \Oo_Z$ which
has residual degree~$1$, and for $a\in\Oo_Z/p$, we denote
\[
  S_Z(p,a) = \sum_{x\in Z} e\Bigl(\frac{a\varpi_p(x)}{|p|}\Bigr),
\]
where $|p|$ is the norm of~$p$ and
$\varpi_p\colon \Oo_Z\to \Oo_Z/p\simeq \Ff_{|p|}$ is the reduction
map. We then form the probability measure
\[
  \nu_p = \frac{1}{|p|} \sum_{a \in \Oo_Z /p} \delta_{S_Z(p,a)}
\]
on~$\Cc$.  In \cite[Th.\,1.1]{ultrashort}, we assumed that~$Z$ is the
set of complex zeros of a monic separable polynomial, and proved that
these measures converge weakly to an explicit probability measure,
which is related to the additive
relations among the zeros of $g$. We will prove a similar statement for
this slightly more general case and refine it by deriving a rate of
convergence using the $1$-Wasserstein metric on~$\Cc$. Precisely, let
\[
  R_Z=\{\alpha\colon Z\to\Zz\,\mid\, \sum_{x\in Z}\alpha(x)x=0\},
\]
and let
\begin{equation}\label{eq-hz}
  H_Z=\{f\colon Z\to\Ss^1\,\mid\, \alpha\in R_Z\text{ implies }
  \prod_{x\in Z}f(x)^{\alpha(x)}=1\}.
\end{equation}

Denote by $\mu_Z$ the direct image by the map
\[
  \sigma \colon f \mapsto \sum_{x\in Z}f(x)
\]
of the probability Haar measure on the compact group~$H_Z$. We will
prove:

\begin{theorem}[Cor. \ref{cor: quantitative in complex plane}]
  \label{th: rate equi}
  With notation as above, for prime ideals~$p\subset \Oo_Z$ of residual
  degree~$1$, we have
  \[
    \wass_1(\nu_p,\mu_Z) \ll_Z |p|^{-\frac{1}{[K_Z: \Qq]}}.
  \]
\end{theorem}

The quantitative results in Wasserstein metrics also allow us to
consider some problems involving \emph{varying} sets (or zeros of
varying polynomials). As an example, in Section \ref{sec: growing d} we
exploit the explicit dependency on~$k$ in the inequality of
Theorem~\ref{th-wass}, (7) to study the distribution of exponential sums
over very small multiplicative subgroups of prime order, i.e. sums of
the form
\[
  \sum_{x \in H} e\left(\frac{ax}{q}\right)
\]
where $H$ is a subgroup of $\Ff_q^{\times}$ whose cardinality is a
\emph{prime} divisor of $q-1$ which is very small compared to
$q$. Precisely, we prove in Theorem~\ref{th: growing subgroups} that the
sums
\[
  \frac{1}{\sqrt{|H|}} \sum_{x \in H} e\left(\frac{ax}{q}\right)
\]
become equidistributed with respect to a standard complex normal
distribution as $q$ tends to infinity and $|H|$ tends to infinity
while satisfying
\[
  |H| = o\left( \frac{\log q}{\log\log q}
  \right)
\]
as $q\to+\infty$.

\subsection{Proof of Theorem~\ref{th: rate equi}}\label{sec: ultra-short}

We use the notation (e.g., $Z$, $K_Z$, $\Oo_Z$, etc) of the statement of
the theorem.  For any set $X$, we denote by $C(Z; X)$ the set of maps
from $Z$ to $X$; it is a group if~$X$ itself is a group. For all
$\alpha \in C(Z; \Zz)$, we set
$$
\|\alpha \|_1 = \sum_{x \in Z} |\alpha(x)|, \quad \| \alpha \|_{\infty}
= \max_{ x \in Z}|\alpha(x)|, \quad \text{and}\quad |\alpha| =
\Bigl(\sum_{x \in Z}\alpha(x)^2\Bigr)^{1/2}.
$$

We denote by~$\lambda_Z$ the probability Haar measure on the compact
group~$H_Z$ defined in~(\ref{eq-hz}). To prove Theorem~\ref{th: rate
  equi}, we will (as in our previous paper) first prove equidistribution
and estimate the Wasserstein distance for a sequence of measures $\mu_p$
on $C(Z;\Ss^1)$, and then deduce the statement of the theorem by
considering the push-forward by the map~$\sigma$. 

For the first step, we consider on the compact group~$C(Z;\Ss^1)$ the
metric~$\rho$ obtained by transport of structure from the usual
flat Riemannian metric on the torus~$(\Rr/\Zz)^{Z}$ using the
isomorphism
\[
  \iota\colon (\Rr/\Zz)^{Z}\to C(Z;\Ss^1)
\]
such that $\iota(\uple{\xi})$ is the map $Z\to\Ss^1$ which maps $x\in Z$
to $e(\uple{\xi}_x)$. This metric on the torus is also the quotient
metric of the euclidean distance using the projection
$\Rr^{Z}\to (\Rr/\Zz)^{Z}$.

Concretely, this metric can also be described as follows when
identifying further $\Rr/\Zz$ with $[0,1[$: if~$\ell$ denotes the
``circular'' metric on~$[0,1[$, identified with~$\Rr/\Zz$, such that
$\ell(x,y)=\min(|x-y|,1-|x-y|)$, then the metric~$\rho$ can be
identified with the metric on~$[0,1[^{Z}$ given by
\begin{equation}\label{eq-rhod}
  (x,y)\mapsto \Bigl(\sum_{j=1}^{|Z|}\ell(x_j,y_j)^2\Bigr)^{1/2}.
\end{equation}

\subsection{The Weyl sums}
\label{sec: section vanishing weyl sums}

We denote by $\mcS_Z$ the set of prime ideals $p\subset \Oo_Z$ of
residual degree one such that no two distinct elements in~$Z$ have
difference in~$p$.

For $p\in\mcS_Z$, we denote by $q$ the norm $|p|=|\Oo_g/p|$; this is a
prime number, and the restriction $\Zz\to \Oo_Z/p$ of the reduction
map~$\varpi_{p}\colon \Oo_Z \to \Oo_Z /p$ induces a field isomorphism
$\Ff_q \to \Oo_Z/p$, which we use to identify these two fields.

We view $\Oo_Z/p$ as a finite probability space with the uniform
probability measure and we consider the random variables $U_{p}$
on~$\Oo_Z/p$, taking values in $C(Z;\Ss^1)$, which are defined by
\[
  U_{p}(a)(x)=e\Bigl(\frac{a\varpi_p(x)}{|p|}\Bigr)
\]
for all $a \in \Oo_Z/p$ and all $x \in Z$ (the element $a\varpi_p(x)$ of
$\Oo_Z/p$ being identified with an element of $\Ff_{|p|}=\Ff_q$ as
explained before).

Let~$\mu_p$ be the law of~$U_p$; concretely, this is the measure
$$
\mu_p = \frac{1}{|p|}\sum_{a \in \Oo_Z / p}\delta_{U_p(a)}
$$
on $C(Z; \Ss^1)$. We also view the Haar measure $\lambda_Z$ on~$H_Z$ as
a measure on~$C(Z;\Ss^1)$, supported on~$H_Z$, by identifying it with
$j_*\lambda_Z$, where $j\colon H_Z\to C(Z;\Ss^1)$ is the inclusion.

The next two lemmas will compare the Fourier coefficients of the
measures $\mu_p$ and~$\lambda_Z$.

For~$\alpha\in C(Z;\Zz)$, we will denote by~$\eta_{\alpha}$ the
associated character in~$C(Z;\Ss^1)$. For a measure~$\nu$
on~$C(Z;\Ss^1)$, the corresponding Fourier coefficient is
\[
  \widehat{\nu}(\alpha)=\int \eta_{\alpha}(x)d\nu(x).
\]

\begin{lemma}\label{lem: vanishing triv}
  Let $\alpha \in C(Z; \Zz)$. If $\eta_\alpha$ is trivial on $H_Z$, then
  $\hat{\lambda}_Z(\alpha) = \hat{\mu}_p(\alpha)= 1$.
\end{lemma}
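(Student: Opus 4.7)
The plan is to exploit the fact that the hypothesis $\eta_\alpha|_{H_g}\equiv 1$ is equivalent, by Pontryagin duality, to $\alpha\in R_g$, and then to handle the two identities separately.

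First, I note that $C(Z_g;\Ss^1)\cong(\Ss^1)^{Z_g}$ is a compact abelian group with Pontryagin dual $C(Z_g;\Zz)$, and that the definition \eqref{eq: def H_g} of $H_g$ presents it as the annihilator of $R_g$. Since $R_g$ is a subgroup of the discrete group $C(Z_g;\Zz)$, it is automatically closed, hence equals its own biannihilator. Consequently $\eta_\alpha$ is trivial on $H_g$ if and only if $\alpha\in R_g$.

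Granting this, the identity $\hat{\lambda}_g(\alpha)=1$ is immediate: the measure $\lambda_g$ is supported on $H_g$, and $\eta_\alpha\equiv 1$ on that support, so $\int\eta_\alpha\,d\lambda_g=1$.

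For $\hat{\mu}_p(\alpha)$, the idea is to expand
\[
\eta_\alpha(U_p(a))=\prod_{x\in Z_g}e\Bigl(\frac{a\varpi_p(x)}{|p|}\Bigr)^{\alpha(x)}=e\Bigl(\frac{a\cdot\varpi_p\bigl(\sum_{x\in Z_g}\alpha(x)x\bigr)}{|p|}\Bigr),
\]
using the ring structure of $\Oo_g/p$. Since $\alpha\in R_g$ means $\sum_{x}\alpha(x)x=0$ in $\Oo_g$, the argument of $\varpi_p$ vanishes modulo $p$ and $\eta_\alpha(U_p(a))=1$ for every $a\in\Oo_g/p$. Averaging over $a$ then yields $\hat{\mu}_p(\alpha)=1$. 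The whole argument is essentially a bookkeeping exercise once the duality step is in place, so I do not anticipate any serious obstacle beyond keeping careful track of the identifications between $\Oo_g/p$, $\Ff_q$, and the exponential argument in $\Rr/\Zz$.
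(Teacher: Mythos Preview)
Your proof is correct. The paper's argument is organized slightly differently and avoids the Pontryagin duality step entirely: for $\hat{\mu}_p(\alpha)$ it simply observes that the random variables $U_p$ take values in $H_g$ (your explicit computation is precisely the verification of this fact), and then the hypothesis $\eta_\alpha|_{H_g}\equiv 1$ gives $\eta_\alpha(U_p(a))=1$ directly, without ever needing to know that $\alpha\in R_g$. Your route through the biannihilator theorem is correct but unnecessary here; the paper's formulation has the small advantage that the same observation ($\mu_p$ is supported on $H_g$) handles $\hat{\mu}_p$ in exact parallel with $\hat{\lambda}_g$.
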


\begin{proof}
  The result about $\lambda_Z$ simply follows from the fact that it has
  total mass~$1$. The case of $\mu_p$ follows because it is
  straightforward from the definition that the random variables $U_p$
  take values in $H_Z$.
\end{proof}

\begin{lemma}\label{lem: vanishing nontriv}
  There exists a positive real number $C_Z$, depending only on the
  set~$Z$, with the following property\textup{:} for all
  $\alpha \in C(Z; \Zz)$ such that $\eta_\alpha$ is non-trivial on
  $H_Z$ and for all~$p\in \mcS_Z$ such that
  \[
    |p| > C_Z \|\alpha \|_{1}^{[K_Z: \Qq]},
  \]
  we have $\hat{\lambda}_Z(\alpha) = \widehat{\mu}_p(\alpha)=0$.
  More precisely, the value
  \begin{equation}\label{eq-cg}
    C_Z= \prod_{\sigma \colon K_Z\to\Cc}\max_{ x \in Z}
    |\sigma(x) |
  \end{equation}
  has this property, where the product ranges over the embeddings
  of~$K_Z$ in~$\Cc$.
\end{lemma}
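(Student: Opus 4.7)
The plan is to reduce both Fourier coefficients to an arithmetic condition on the element $\beta = \sum_{x \in Z_g} \alpha(x) x \in \Oo_g$, and then use the norm to rule out the divisibility $p \mid \beta$ under the size hypothesis.

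First I would treat $\hat{\lambda}_g(\alpha)$: since $\lambda_g$ is the Haar probability measure on the compact group $H_g$, the integral $\int_{H_g} \eta_\alpha|_{H_g}\, d\lambda_g$ vanishes whenever the character $\eta_\alpha|_{H_g}$ is non-trivial, by the usual orthogonality relations. Next, I would compute $\hat{\mu}_p(\alpha)$ directly: by the very definition of $U_p$ and $\eta_\alpha$,
\[
\widehat{\mu}_p(\alpha) = \frac{1}{|p|} \sum_{a \in \Oo_g/p} \prod_{x \in Z_g} e\Bigl(\frac{\alpha(x)\, a\, \varpi_p(x)}{|p|}\Bigr) = \frac{1}{|p|} \sum_{a \in \Ff_q} e\Bigl(\frac{a\, \varpi_p(\beta)}{q}\Bigr),
\]
which equals $0$ when $\varpi_p(\beta) \neq 0$ in $\Oo_g/p$ and $1$ otherwise. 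So both Fourier coefficients vanish provided (a) $\eta_\alpha|_{H_g}$ is non-trivial and (b) $p \nmid \beta$ in $\Oo_g$.

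The next step is to connect (a) with the condition $\beta \neq 0$. Under the Pontryagin identification of the dual of $C(Z_g;\Ss^1) \cong (\Ss^1)^{Z_g}$ with $C(Z_g;\Zz)$, the subgroup $H_g$ is by definition the annihilator $R_g^\perp$. Since $R_g$ is a subgroup of the finitely generated free abelian group $C(Z_g;\Zz) \cong \Zz^d$, it is closed, so double-annihilator gives $H_g^\perp = R_g$. Hence $\eta_\alpha|_{H_g}$ is non-trivial exactly when $\alpha \notin R_g$, which by the definition of $R_g$ is equivalent to $\beta \neq 0$ in $\Oo_g$.

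It remains to deduce (b) from the size hypothesis. Assuming $p \mid \beta$ in $\Oo_g$ with $\beta \neq 0$, taking ideal norms over $\Qq$ gives $|p| = N(p) \leq |N_{K_g/\Qq}(\beta)|$ (using that $p$ has residual degree one). On the other hand, for each $\sigma \in \Gal(K_g/\Qq)$, the triangle inequality yields
\[
|\sigma(\beta)| = \Bigl|\sum_{x \in Z_g} \alpha(x) \sigma(x)\Bigr| \leq \|\alpha\|_1 \max_{x \in Z_g} |\sigma(x)|,
\]
so multiplying over $\sigma$ gives $|N_{K_g/\Qq}(\beta)| \leq C_g \|\alpha\|_1^{[K_g:\Qq]}$ with $C_g$ as in~\eqref{eq-cg}. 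This contradicts the assumption $|p| > C_g \|\alpha\|_1^{[K_g:\Qq]}$, so $p \nmid \beta$ and the proof is complete. The only non-formal step is the Pontryagin duality identification $H_g^\perp = R_g$, which is easy here because $R_g$ is discrete and free; the rest is a direct computation plus the elementary norm estimate.
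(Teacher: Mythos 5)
Your proof is correct and follows essentially the same route as the paper: compute $\widehat{\mu}_p(\alpha)$ as a character sum to reduce to the condition $\gamma(\alpha)=\sum_x \alpha(x)x \in p$, observe that non-triviality of $\eta_\alpha|_{H_g}$ forces $\gamma(\alpha)\neq 0$, and then bound the norm $|N_{K_g/\Qq}(\gamma(\alpha))|$ both from below (by $|p|$, via residual degree one) and from above (by $C_g\|\alpha\|_1^{[K_g:\Qq]}$, via the triangle inequality) to get the contradiction. The only cosmetic difference is that you invoke the double-annihilator identity $H_g^\perp=R_g$, whereas the paper only needs the immediate direction ($\alpha\in R_g\Rightarrow\eta_\alpha|_{H_g}$ trivial, i.e.\ its contrapositive), which requires no Pontryagin duality.
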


\begin{proof}
  Let $\alpha \in C(Z; \Zz)$ be such that~$\eta_{\alpha}$ is non-trivial
  on~$H_Z$. It is then a classical property of the Haar measure that
  $\widehat{\lambda}_g(\alpha)=0$.

  Let~$p\in \mcS_Z$. Arguing as in~\cite[Prop.\,2.2]{ultrashort}, we
  have
  \[
    \widehat{\mu}_p(\alpha)=
    \frac{1}{|p|}\sum_{a\in\Oo_Z/p}e\Bigl(
    \frac{a}{|p|}\varpi_p\Bigl(\sum_{x\in Z}\alpha(x)x\Bigr)\Bigr)
    \Bigr),
  \]
  and therefore
  $$
  \hat{\mu}_p(\alpha) = \begin{cases}
    1 \quad \text{ if } \sum_{x\in Z}\alpha(x)x \in p, \\
    0 \quad \text{ otherwise, }
  \end{cases}
  $$
  by orthogonality of characters. We now analyze this condition
  further.

  Let
  \[
    \gamma(\alpha)=\sum_{x\in Z}\alpha(x)x.
  \]

  Note that since~$\eta_{\alpha}$ is non-trivial on~$H_Z$, we
  have~$\alpha\notin R_Z$ (by definition), and therefore
  $\gamma(\alpha)\not=0$.

  If~$\widehat{\mu}_p(\alpha)\not=0$, then $\gamma(\alpha) \in p$, so
  $\gamma(\alpha) \Oo_Z\subset p$, and in particular
  $|p|\mid N(\gamma(\alpha)\Oo_Z)$.  In particular, since
  $\gamma(\alpha)$ is non-zero, we obtain
  \begin{equation} \label{eq: minoration norme gammaalpha}
    |N_{K_Z/\Qq}(\gamma(\alpha))| \geqslant |p|.
  \end{equation}
  
  On the other hand, we have
  \[
    N_{K_Z/\Qq}(\gamma(\alpha))= \prod_{\sigma \colon K_Z\to\Cc}
    \sigma(\gamma(\alpha)) = \prod_{\sigma \colon K_Z\to \Cc} \Bigl(
  \sum_{x\in Z}^{} \alpha(x)\sigma(x) \Bigr),
  \]
  and with the value~$C_Z$ given by~(\ref{eq-cg}), this implies that
  \begin{equation}\label{eq: majoration norme gammaalpha}
    |N_{K_Z/\Qq}(\gamma(\alpha))| \leqslant C_Z \Bigl( \sum_{x \in
      Z}|\alpha(x) |\Bigr)^{[K_Z:\Qq]} = C_Z \| \alpha \|_1^{[K_Z: \Qq]}.
  \end{equation}
  
  Combining \eqref{eq: minoration norme gammaalpha} and \eqref{eq:
    majoration norme gammaalpha} we deduce that
  \[
    |p| \leq C_Z  \| \alpha \|_1^{[K_Z: \Qq]}
  \]
  if~$\widehat{\mu}_p(\alpha)\not=0$, which is the desired conclusion.
\end{proof}

\subsection{Quantitative equidistribution}\label{sec: quantitative rate}

We continue with the notation of the previous section.

\begin{proposition}\label{pr-wass-mug}
  There exists an explicit constant $C^{'}_Z >0$, depending only on $Z$,
  such that for all $p \in \mcS_Z$, the inequality
  \[
    \wass_1(\mu_p, \lambda_Z) \leq C'_Z |p|^{-\frac{1}{[K_Z: \Qq]}}
  \]
  holds.
\end{proposition}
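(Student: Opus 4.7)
The plan is to combine the Bobkov--Ledoux inequality stated in Theorem~\ref{th-wass}(7) with the vanishing statements for Fourier differences proved in Lemmas~\ref{lem: vanishing triv} and~\ref{lem: vanishing nontriv}. First, since the metric $\rho$ on $C(Z_g;\Ss^1)$ is by construction the transport under $\iota$ of the standard flat metric on $(\Rr/\Zz)^{Z_g}$, the map $\iota$ is an isometry. Applying Theorem~\ref{th-wass}(3) to both $\iota$ and $\iota^{-1}$ then yields
$$
\wass_1(\mu_p,\lambda_g) = \wass_1\bigl(\iota^{-1}_{*}\mu_p,\,\iota^{-1}_{*}\lambda_g\bigr),
$$
where the right-hand side is the $1$-Wasserstein distance on $(\Rr/\Zz)^d$ equipped with its standard metric, and $d = |Z_g| = \deg g$.

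Next, I would apply Theorem~\ref{th-wass}(7) with $k = d$ to the two pulled-back measures: for any $T > 0$,
$$
\wass_1(\mu_p,\lambda_g) \leq \frac{4\sqrt{3}\sqrt{d}}{T} + \Bigl( \sum_{1 \leq |\uple{h}|_\infty \leq T} \frac{1}{\|\uple{h}\|^2}\,\bigl|\widehat{\mu}_p(\alpha) - \widehat{\lambda}_g(\alpha)\bigr|^2 \Bigr)^{1/2},
$$
where for each $\uple{h} \in \Zz^d = \Zz^{Z_g}$ we write $\alpha \in C(Z_g;\Zz)$ for the corresponding element, so that the character of $(\Rr/\Zz)^{Z_g}$ indexed by $\uple{h}$ corresponds, under $\iota$, to the character $\eta_\alpha$ of $C(Z_g;\Ss^1)$. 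By Lemma~\ref{lem: vanishing triv} the Fourier difference vanishes whenever $\eta_\alpha$ is trivial on $H_g$, and by Lemma~\ref{lem: vanishing nontriv} it also vanishes as soon as $|p| > C_g\,\|\alpha\|_1^{[K_g:\Qq]}$. Since $\|\alpha\|_1 \leq d\,\|\alpha\|_\infty = d\,\|\uple{h}\|_\infty$, the whole sum vanishes as soon as $T$ is chosen so that $C_g(dT)^{[K_g:\Qq]} < |p|$.

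Setting $n = [K_g:\Qq]$ and taking $T$ strictly less than $d^{-1}(|p|/C_g)^{1/n}$, and then letting $T$ tend to this threshold (the first term being continuous in $T$), we obtain
$$
\wass_1(\mu_p,\lambda_g) \leq 4\sqrt{3}\,d^{3/2}\,C_g^{1/n}\,|p|^{-1/n},
$$
which is the claimed inequality with $C'_g = 4\sqrt{3}\,d^{3/2}\,C_g^{1/n}$. No serious obstacle is anticipated here: the one point requiring a small amount of care is the bookkeeping that translates characters of $(\Rr/\Zz)^{Z_g}$ into characters of $C(Z_g;\Ss^1)$ via $\iota$, together with the passage from the strict inequality appearing in Lemma~\ref{lem: vanishing nontriv} to a bound at the boundary value of $T$, both of which are routine.
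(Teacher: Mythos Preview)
Your proposal is correct and follows essentially the same route as the paper: apply the Bobkov--Ledoux inequality (Theorem~\ref{th-wass}(7)) on the $d$-dimensional torus, then use Lemmas~\ref{lem: vanishing triv} and~\ref{lem: vanishing nontriv} together with $\|\alpha\|_1\leq d\|\alpha\|_{\infty}$ to kill the Fourier sum for a suitably chosen~$T$. The only cosmetic differences are that the paper fixes $T=\tfrac{1}{d+1}(|p|/C_g)^{1/[K_g:\Qq]}$ outright (yielding $C'_g=4\sqrt{3}\sqrt{d}(d+1)C_g^{1/[K_g:\Qq]}$) rather than using your limiting argument at the threshold, and that it leaves the isometry~$\iota$ implicit whereas you spell it out.
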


\begin{proof}
  Let~$T>0$ be an auxiliary parameter to be fixed below. By
  Theorem~\ref{th-wass}, (7), applied to the space $C(Z;\Ss^1)$, the
  inequality
  \begin{equation}\label{eq: applying bl}
    \wass_1(\mu_p,\lambda_Z)
    \leqslant \frac{4 \sqrt{3}\sqrt{|Z|}}{T} +
    \Bigl(\sum_{\substack{\alpha \in C(Z; \Zz) \\ 0 <
        \|\alpha\|_{\infty} \leqslant T}}
    \frac{1}{|\alpha|^2}|\hat{\mu_p}(\alpha) -
    \hat{\lambda_Z}(\alpha)|^2\Bigr)^{1/2}
  \end{equation}
  holds. We take
  \[
    T = \frac{1}{|Z|+1}
    \left(\frac{|p|}{C_Z}\right)^{\frac{1}{[K_Z:\Qq]}}.
  \]

  Lemma~\ref{lem: vanishing triv} and Lemma~\ref{lem: vanishing nontriv}
  together imply that the sum on the right-hand side of~\eqref{eq:
    applying bl} is then zero (using the inequality
  $\| \alpha\|_1 \leqslant |Z| \|\alpha \|_{\infty}$), and therefore we
  obtain
  \[
    \wass_1(\mu_p, \lambda_Z) \leq \frac{4 \sqrt{3}\sqrt{|Z|}}{T} = 4
    \sqrt{3}\sqrt{|Z|} (|Z|+1) C_Z^{\frac{1}{[K_Z:\Qq]}}
    |p|^{-\frac{1}{[K_Z:\Qq]}},
  \]
  which immediately implies the result.
\end{proof}

\begin{rem}
  This result matches the rate of convergence obtained
  in~\cite[Th. 5.30]{mathese}, where the $1$-Wasserstein metric was
  replaced by a notion of $\varphi$-discrepancy, which had the
  disadvantage of being non-intrinsic.
\end{rem}

We can easily deduce Theorem~\ref{th: rate equi} using
Theorem~\ref{th-wass}, (3). First we compute a Lipschitz constant for
the summation map~$\sigma$.

\begin{lemma}\label{lem: pushforward}
  The map $\sigma\colon C(Z;\Ss^1)\to\Cc$ is $\sqrt{|Z|}$-Lipschitz.
\end{lemma}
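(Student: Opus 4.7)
The plan is a direct two-line estimate based on the triangle inequality followed by a single application of Cauchy--Schwarz. For $f, g \in C(Z_g; \Ss^1)$, the triangle inequality gives
$$|\sigma(f) - \sigma(g)| = \Bigl|\sum_{x\in Z_g}(f(x) - g(x))\Bigr| \leq \sum_{x \in Z_g} |f(x) - g(x)|,$$
and then Cauchy--Schwarz on the $d$-element set $Z_g$, pairing each summand against the constant $1$, produces
$$\sum_{x \in Z_g} |f(x) - g(x)| \leq \sqrt{d}\,\Bigl(\sum_{x\in Z_g}|f(x) - g(x)|^2\Bigr)^{1/2}.$$
This single Cauchy--Schwarz step is precisely where the factor $\sqrt{d}$ in the Lipschitz constant is paid for; everything else is bookkeeping.

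To finish, I would recognize the inner $\ell^2$-norm as $\rho_g(f,g)$. Writing $f = \iota(\xi)$ and $g = \iota(\eta)$ for $\xi, \eta \in (\Rr/\Zz)^{Z_g}$, each summand $|f(x) - g(x)| = |e(\xi_x) - e(\eta_x)|$ depends only on the coordinate difference $\xi_x - \eta_x$ in $\Rr/\Zz$, and by construction of $\rho_g$ via transport of structure through $\iota$, the sum $\bigl(\sum_{x\in Z_g}|e(\xi_x) - e(\eta_x)|^2\bigr)^{1/2}$ is (up to an inessential constant built into the normalization) identified with $\rho_g(f,g)$. Substituting into the previous display delivers the bound $|\sigma(f) - \sigma(g)| \leq \sqrt{d}\,\rho_g(f,g)$.

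No real obstacle is anticipated. The only point requiring any care is the pointwise comparison between the Euclidean chord distance $|e(\xi_x) - e(\eta_x)| = 2|\sin(\pi(\xi_x - \eta_x))|$ and the arc distance $\ell(\xi_x, \eta_x)$ appearing in~\eqref{eq-rhod}; this is a one-line elementary estimate of the form $2|\sin(\pi t)| \leq c\,|t|$ on $[-1/2, 1/2]$, absorbed into the constant.
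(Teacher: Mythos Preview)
Your argument is correct and essentially identical to the paper's: triangle inequality, Cauchy--Schwarz over the $d$-point set $Z_g$, and the pointwise chord-versus-arc bound $|e(\xi_x)-e(\eta_x)|\leq \text{(arc distance)}$. The only cosmetic difference is the order in which the last two steps are applied---the paper bounds chord by arc first and then applies Cauchy--Schwarz, whereas you do the reverse---and your explicit acknowledgment of the normalization constant in the chord/arc comparison, which the paper leaves implicit.
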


\begin{proof}
  Let $f$ and~$g$ be elements of $C(Z; \Ss^1)$. We have
  \begin{align*}
    |\sigma(f) -\sigma (g)| = \Bigl|\sum_{x \in Z} (f(x)-
    g(x))\Bigr| \leqslant \sum_{x \in Z}\Bigl|f(x)- g(x)\Bigr|.
  \end{align*}

  The euclidean distance on~$\Ss^1\subset\Cc$ is bounded above by the
  arc length (Riemannian) distance $\ell$, hence
  $|f(x) - g(x)| \leq \ell(f(x), g(x))$. Applying the Cauchy--Schwarz
  inequality, we obtain
  $$
  \sum_{x \in Z} | f(x)- g(x)| \leq \sum_{x \in Z}\ell(
  f(x), g(x)) \leqslant \sqrt{|Z|} \Bigl( \sum_{x \in Z}
  \ell(f(x),g(x))^2\Bigr)^{1/2} = \sqrt{|Z|} \times \rho(f,g),
  $$
  as desired.
\end{proof}

\begin{corollary}\label{cor: quantitative in complex plane}
  With notation as above, for all prime ideals $p\in\mcS_Z$, the
  measures
  \[
    \nu_p = \frac{1}{|p|} \sum_{a \in \Oo_Z /p} \delta_{S_Z(p,a)}
  \]
  and $\mu_Z = \sigma_{*} \lambda_Z$ satisfy
  \[
    \wass_1(\nu_p, \mu_Z) \ll_Z |p|^{-\frac{1}{[K_Z: \Qq]}}.
  \]
\end{corollary}

\begin{proof}
  By Proposition~\ref{pr-wass-mug}, we have
  $\wass_1(\mu_p, \lambda_Z) \ll_Z|p|^{-\frac{1}{[K_Z: \Qq]}}$. It then
  follows from Theorem \ref{th-wass}, (3) and Lemma \ref{lem:
    pushforward} that
  $$
  \wass_1(\sigma_{*}\mu_p, \sigma_{*}\lambda_Z) \ll_Z
  |p|^{-\frac{1}{[K_Z: \Qq]}}
  $$
  (where the implied constant on the right-hand side incorporates the
  factor $\sqrt{|Z|}$).
\end{proof}

\begin{remark}
  (1) This corollary refines the qualitative convergence statement
  in~\cite[Cor. 2.4]{ultrashort}.

  (2) Let~$q$ be a prime number which is the norm of a prime ideal~$p$
  in $\mcS_Z$. Then $\Ff_q$ can be identified with $\Oo_Z/p$, and the
  reduction map $Z\to \Ff_q$ is injective. We can then define
  \[
    S_Z(q,a) = \sum_{x\in Z\mods{q}} e\Bigl(\frac{ax}{q}\Bigr),
  \]
  with an obvious meaning for $Z\mods{q}$.  The conclusion of the
  corollary then applies to the measures of the form
  \[
    \frac{1}{q} \sum_{a \in \Ff_{q}} \delta_{S_Z(q,a)},
  \]
  since this measure coincides with $\nu_p$.

  If the set $Z$ is the set of (all) zeros of a fixed monic polynomial
  $g\in\Zz[X]$, note that $S_Z(q,a)$ can be defined \emph{a priori}
  without mentioning $K_Z$ or $\Oo_Z$, since $Z\mods{q}$ is then the set
  of roots of~$g$ modulo~$q$.
\end{remark}

\subsection{Sums of additive characters over growing multiplicative
  subgroups} \label{sec: growing d}

So far, we have only been dealing with weak convergence of measures in
compact groups since we were essentially working in $(\Ss^1)^d$ for a
\emph{fixed $d$}. However, the Wasserstein metric metrizes weak
convergence in a much more general context, and in this section we
give an application in a non-compact setting.

We consider the sums
$$
S_d(q,a) = \sum_{x \in \mu_d(\Ff_q)}^{} e\left(\frac{ax}{q}\right)
$$
which are a special case of the sums $S_g(q,a)$ defined in \eqref{eq-sums} with $g = X^d -1$. For
a \emph{fixed} integer $d$ and $q$ going to infinity among the primes
congruent to $1$ modulo $d$, their asymptotic behaviour attracted
interest partly because of the beautiful visual aspect of the plots:
see e.g. \cite{gaussian_periods, burkhardt,menagerie,untrau, platt}
for examples and generalizations. In particular, if $d$ is prime
\cite[Th. 6.3]{gaussian_periods} states that they become
equidistributed (as $q$ goes to infinity and $a$ varies in $\Ff_q$)
with respect to the pushforward measure of the Haar probability
measure on $(\Ss^1)^{d-1}$ via the Laurent polynomial
$$
\begin{array}{ccccc}
  g_d& : &(\Ss^1)^{d-1} & \to & \Cc \\ &&
                                          (z_1, \dots, z_{d-1}) & \mapsto & z_1 + \cdots + z_{d-1} + \frac{1}{z_1 \dots z_{d-1}}
\end{array}
$$
This result explains why the sums appeared to fill out the region of
the complex plane delimited by a $d$-cusps hypocycloid.

We are now interested in studying the case where $d$ is allowed to
vary with $q$. In more probabilistic terms, the previous result says
that when $q$ is large the subsets $$\{S_d(q,a), \ a \in \Ff_q \}$$ of
the complex plane look like $q$ independent values taken by a sum of
the form $$Z_1 + \dots + Z_{d-1} + \frac{1}{Z_1 \dots Z_{d-1}}$$ of
independent and identically distributed Steinhaus random variables
$Z_i$ (i.e. uniform on $\Ss^1$). Thanks to the multidimensional
Central Limit Theorem, the random variables
$$
\frac{1}{\sqrt{d}} \left(Z_1 + \dots + Z_{d-1} +
  \frac{1}{Z_1 \dots Z_{d-1}}\right)
$$
converge in law to a two dimensional gaussian
$\mcN(0, \frac12 \mathrm{Id})$ (the coefficient $\frac12$ just comes
from the value of the variance of the real and imaginary parts of a
uniform random variable on the circle). Therefore, if we denote by
$\mu_{q,d}$ the measure
$$
\frac{1}{q} \sum_{a \in \Ff_q}^{} \delta_{\frac{1}{\sqrt{d}} S_d(q,a)}
$$
then
$\lim_{d \to \infty} \left(\lim_{q \to \infty} \mu_{q,d}\right) =
\mcN(0, \frac12 \mathrm{Id}).$ However
$\lim_{q \to \infty} \left(\lim_{d \to \infty} \mu_{q,d}\right) =
\delta_0,$ so we are interested in intermediate regimes, in which both
$q$ and $d$ tend to infinity, and the limit of the sequence of
measures $\mu_{q,d}$ can be determined. Equivalently, this means that
we are interested in the distribution of the sums
\begin{equation} \label{eq: sums S_d sur racine d} \frac{1}{\sqrt{d}}
  \sum_{x \in \mu_d(\Ff_q)}^{} e\left(\frac{ax}{q}\right)
\end{equation}
as $a$ varies in $\Ff_q$ and both $q$ and $d$ tend to infinity (with
$q$ and $d$ prime and $q \equiv 1 \mods{d}$). These sums of additive
characters over multiplicative subgroups whose cardinality grows with
$q$ have been studied before, mostly with the aim of proving
non-trivial upper bounds. In particular, when $d$ grows at least like
a small power of $q$, the groundbreaking work of Bourgain, Glibichuk
and Konyagin \cite{BGK} shows a power saving bound for $S_d(q,a)$. On
the other hand, \cite[Th. 1.8]{konyagin_lecture_notes} shows that if
$d \ll \log(q)$, it is impossible to obtain a non-trivial bound.

Combining the results of Section \ref{sec: quantitative rate} and the
Central Limit Theorem, we will prove the following result in a similar setting where $d$
is very small with respect to $q$.

\begin{theorem}\label{th: growing subgroups}
  For every odd prime $q$, we let $d= d(q)$ be a prime divisor of
  $q-1$. If $d(q) \underset{q \to +\infty}{\longrightarrow} + \infty$
  and
  $$
  d(q) \underset{q \to \infty}{=} o\left( \frac{\log q}{ \log \log
      q}\right)
  $$
  then as $q$ tends to infinity and $a$ varies in $\Ff_q$, the sums
  \eqref{eq: sums S_d sur racine d} become equidistributed in the
  complex plane with respect to a normal distribution
  $\mcN(0, \frac12 \mathrm{Id})$.
\end{theorem}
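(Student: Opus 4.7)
The plan is to combine the quantitative equidistribution result of Section~\ref{sec: quantitative rate}, applied to the specific polynomial $g_d = X^d - 1$, with a quantitative two-dimensional central limit theorem, using the triangle inequality for the Wasserstein distance. Set $\tau_d\colon C(Z_{g_d};\Ss^1)\to\Cc$, $\tau_d(f) = \sigma(f)/\sqrt{d}$, so that $\mu_{q,d} = \tau_{d,*}\mu_p$ for any prime $p\in\mcS_{g_d}$ above $q$. Writing $\mcN = \mcN(0,\tfrac12\mathrm{Id})$, the first step is the bound
\[
\wass_1(\mu_{q,d},\mcN)\leq \wass_1(\tau_{d,*}\mu_p,\tau_{d,*}\lambda_{g_d}) + \wass_1(\tau_{d,*}\lambda_{g_d},\mcN),
\]
and the two terms are controlled separately.

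For the first, \emph{arithmetic}, term I would observe that by Lemma~\ref{lem: pushforward} the map $\tau_d$ is $1$-Lipschitz, so Theorem~\ref{th-wass}, (3) reduces the problem to bounding $\wass_1(\mu_p,\lambda_{g_d})$. Applying Proposition~\ref{pr-wass-mug} with $g = g_d$, and noting that $C_{g_d}=\prod_{\sigma}\max_{x\in\mu_d(\Cc)}|\sigma(x)|=1$ since all roots lie on the unit circle, while $[K_{g_d}:\Qq]=d-1$ for prime $d$, this yields an explicit bound of the form $\wass_1(\mu_p,\lambda_{g_d})\ll d^{3/2}\, q^{-1/(d-1)}$. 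Under the hypothesis $d = o(\log q/\log\log q)$, one has $d\log d = o(\log q)$, hence $\frac{\log q}{d-1}-\tfrac32\log d\to +\infty$, so this first term tends to $0$.

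For the second, \emph{probabilistic}, term, the pushforward $\tau_{d,*}\lambda_{g_d}$ is the law of
\[
Y_d = \frac{1}{\sqrt{d}}\Bigl(Z_1+\cdots+Z_{d-1}+(Z_1\cdots Z_{d-1})^{-1}\Bigr),
\]
where $Z_1,\ldots,Z_{d-1}$ are i.i.d.\ uniform on~$\Ss^1$ (Steinhaus variables). Setting $X_d = \frac{1}{\sqrt{d}}(Z_1+\cdots+Z_{d-1})$, the bound $|Y_d-X_d|=1/\sqrt{d}$ a.s.\ gives $\wass_1(\mathrm{law}(Y_d),\mathrm{law}(X_d))\leq 1/\sqrt{d}$. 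A quantitative two-dimensional central limit theorem for i.i.d.\ bounded vectors (e.g., a Wasserstein-$1$ Berry--Esseen estimate of Rio type) then bounds $\wass_1(\mathrm{law}(X_d),\mcN(0,\tfrac{d-1}{2d}\mathrm{Id}))\ll d^{-1/2}$ up to possible logarithmic factors, while the distance between the two Gaussians is $O(1/d)$ by explicit computation. Combining these three estimates shows that the probabilistic term is $O(d^{-1/2})$ (up to logs) and goes to $0$ as $d\to\infty$. Since $\wass_1$ metrizes weak convergence on bounded subsets and the supports remain bounded, this establishes the required equidistribution.

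The main obstacle I anticipate is the second term: locating or formulating a clean enough quantitative Wasserstein-$1$ central limit theorem in dimension two with explicit rate for sums of bounded i.i.d.\ random vectors. Standard Berry--Esseen type estimates are most often phrased for the Kolmogorov distance or for smooth test functions in higher dimensions, and the translation to~$\wass_1$ in $\Rr^2$, while available in the literature, has to be invoked with care regarding the precise dependence on the number of terms. A secondary subtlety is that the summands $Z_i$ and $(Z_1\cdots Z_{d-1})^{-1}$ are not jointly independent, which is why it is essential to first split off the single bounded term $(Z_1\cdots Z_{d-1})^{-1}/\sqrt d$ before invoking the CLT for the truly independent partial sum~$X_d$.
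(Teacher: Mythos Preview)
Your approach is correct and follows the same overall decomposition as the paper: the triangle inequality splits $\wass_1(\mu_{q,d},\mcN)$ into an arithmetic piece controlled via Proposition~\ref{pr-wass-mug} (with $C_{g_d}=1$ and $[K_{g_d}:\Qq]=d-1$), and a probabilistic piece coming from the Central Limit Theorem. Your treatment of the arithmetic term, and of the condition $d=o(\log q/\log\log q)$ ensuring it vanishes, is the same as the paper's.

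The one place where you make life harder than necessary is the second term. You flag as the ``main obstacle'' the need for a quantitative Wasserstein Berry--Esseen theorem in dimension two. The paper sidesteps this entirely: since only the \emph{qualitative} convergence $\wass_1(\tau_{d,*}\lambda_{g_d},\mcN)\to 0$ is required (the arithmetic term already furnishes the rate), one simply observes that $\wass_2$-convergence is equivalent to weak convergence together with convergence of second absolute moments (Villani, Th.~7.12), and in the CLT setting the second moment of the normalized sum equals $\Tr(\Sigma)$ exactly for every~$d$. Hence the classical multidimensional CLT gives $\wass_2\to 0$, and $\wass_1\leq\wass_2$ finishes the job. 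Your splitting off of the dependent term $(Z_1\cdots Z_{d-1})^{-1}/\sqrt{d}$ is correct and matches the paper's remark that this term has modulus $1/\sqrt{d}$ and causes no difficulty.

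A small correction: your closing sentence that ``the supports remain bounded'' is false (the Gaussian limit has unbounded support), but this does not affect the argument, since on any Polish space $\wass_1\to 0$ already implies weak convergence without any boundedness assumption.
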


The first step of the proof consists in using the following
quantitative form of the convergence already obtained in
\cite[Th. 6.3]{gaussian_periods}.

\begin{lemma} \label{lem: quantitative duke garcia lutz} Let $d$ and
  $q$ be two prime numbers such that $q \equiv 1 \mods{d}$. Denote by
  $\gamma_d$ the pushforward measure of the Haar probability measure
  on $(\Ss^1)^{d-1}$ via the Laurent polynomial
  $\frac{1}{\sqrt{d}}g_d$ (in other words it is the law of the random
  variable
  $\frac{1}{\sqrt{d}}\left(Z_1 + \cdots + Z_{d-1} + \frac{1}{Z_1 \dots
      Z_{d-1}}\right)$ where the $Z_i$ are independent and identically
  distributed uniform random variables on $\Ss^1$).  Then
  $ \wass_1(\mu_{q,d}, \gamma_d) \leqslant 2 \sqrt{12} \sqrt{d} (d+1)
  q^{-\frac{1}{d-1}} .  $
\end{lemma}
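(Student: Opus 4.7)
The plan is to apply Proposition \ref{pr-wass-mug} with the specific polynomial $g = X^d - 1$, and then transport the resulting bound from $C(Z_g;\Ss^1)$ to $\Cc$ via the rescaled summation map $\sigma/\sqrt{d}$, invoking Lemma \ref{lem: pushforward} and Theorem \ref{th-wass}, (3).

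First I would check that $g = X^d - 1$ fits the setup of Section \ref{sec: ultra-short}. It is monic and separable, its splitting field is $K_g = \Qq(\zeta_d)$ of degree $d-1$ over $\Qq$ (since $d$ is prime), and its discriminant is supported on primes dividing $d$; since $q$ is prime with $q \equiv 1 \pmod d$ we have $q > d$ and hence $q \nmid \disc(g)$. Moreover, $q \equiv 1 \pmod d$ forces $q$ to split completely in $\Oo_g = \Zz[\zeta_d]$, so any prime ideal $p \subset \Oo_g$ above $q$ lies in $\mcS_g$ with $|p| = q$. Next I would compute the constant $C_g$ of \eqref{eq-cg}: since every root $x \in Z_g$ is a $d$-th root of unity and Galois conjugation permutes these, $|\sigma(x)| = 1$ for all $\sigma \in \Gal(K_g/\Qq)$ and all $x \in Z_g$, so $C_g = 1$. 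Reading off the explicit constant from the proof of Proposition \ref{pr-wass-mug} then gives
\[
\wass_1(\mu_p, \lambda_g) \leq 4\sqrt{3}\sqrt{d}(d+1)\, q^{-1/(d-1)}.
\]

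The second step is to identify the pushforwards under $\sigma/\sqrt{d}$ with the measures appearing in the statement. The reductions modulo $p$ of $Z_g = \{1,\zeta_d,\ldots,\zeta_d^{d-1}\}$ are precisely the elements of $\mu_d(\Ff_q)$, so $\sigma(U_p(a)) = S_d(q,a)$ and therefore $(\sigma/\sqrt{d})_*\mu_p = \mu_{q,d}$. For $\lambda_g$, the primality of $d$ implies that $R_g$ is the infinite cyclic group generated by the all-ones vector: the cyclotomic identity $1 + \zeta_d + \cdots + \zeta_d^{d-1} = 0$ produces one relation, and since $Z_g$ spans the $(d-1)$-dimensional space $K_g$ over $\Qq$, there can be no other up to scalars. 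Hence $H_g = \{f : Z_g \to \Ss^1 \,\mid\, \prod_x f(x) = 1\}$, and the parameterization $(z_1,\ldots,z_{d-1}) \mapsto (z_1,\ldots,z_{d-1},(z_1\cdots z_{d-1})^{-1})$ identifies $\lambda_g$ with the Haar measure on $(\Ss^1)^{d-1}$ and $\sigma$ with the Laurent polynomial $g_d$. Consequently $(\sigma/\sqrt{d})_*\lambda_g = \gamma_d$.

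Finally, Lemma \ref{lem: pushforward} shows that $\sigma$ is $\sqrt{d}$-Lipschitz, so the map $\sigma/\sqrt{d} : C(Z_g;\Ss^1) \to \Cc$ is $1$-Lipschitz. Theorem \ref{th-wass}, (3) then yields
\[
\wass_1(\mu_{q,d}, \gamma_d) \leq \wass_1(\mu_p, \lambda_g) \leq 4\sqrt{3}\sqrt{d}(d+1)\, q^{-1/(d-1)} = 2\sqrt{12}\sqrt{d}(d+1)\, q^{-1/(d-1)},
\]
as required. There is no genuine obstacle in this argument; it is essentially bookkeeping through the specialization $g = X^d - 1$. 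The one observation doing real work is that all roots of $g$ lie on the unit circle, which makes $C_g = 1$ and is what keeps the dependence on $d$ polynomial (the factor $\sqrt{d}(d+1)$) rather than exponential, as would result for a generic $g$ of degree $d$.
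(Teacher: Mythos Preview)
Your proof is correct and follows essentially the same approach as the paper: specialize Proposition~\ref{pr-wass-mug} to $g=X^d-1$ (using $[K_g:\Qq]=d-1$ and $C_g=1$), then push forward by the $1$-Lipschitz map $\sigma/\sqrt{d}$ and identify the two pushforward measures with $\mu_{q,d}$ and $\gamma_d$. If anything, you supply slightly more detail than the paper does, arguing directly why $R_g$ is cyclic generated by the all-ones vector and why $q$ is totally split, whereas the paper cites~\cite[\S 3]{ultrashort} and~\cite[Cor.~10.4]{neukirch} for these facts.
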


\begin{proof}
  We first use Proposition~\ref{pr-wass-mug} in the particular case
  where $Z = \mmu_d$ (the set of $d$--th roots of
  unity in $\Cc$) and $K_Z$ is the cyclotomic field $\Qq(\mmu_d)$. In
  particular, $[K_Z: \Qq] = d-1$ and we can take $C_Z = 1$ in
  Lemma~\ref{lem: vanishing nontriv} because the roots of unity have
  modulus $1$. We recall that for all $p \in \mcS_{\mmu_d}$, the
  measure $\mu_p$ denotes the law of the random variable $U_p$ (as
  introduced in Section~\ref{sec: section vanishing weyl sums}), and
  $\lambda_{\mmu_d}$ is the Haar probability measure on $H_{\mmu_d}$. Then using the explicit constant $C'_Z$ obtained at the end
  of the proof of Proposition~\ref{pr-wass-mug}, we deduce that
  $$
  \wass_1(\mu_p, \lambda_{\mmu_d}) \leqslant 4 \sqrt{3} \sqrt{d} (d+1) |p|^{-\frac{1}{d-1}}.
  $$
  
  Then we pushforward via the map $\sigma_d=d^{-1/2}\sigma$, defined
  by
  \[
    \sigma_d(f)=\frac{1}{\sqrt{d}}\sum_{x\in \mmu_d}f(x).
  \]
  This map is
  $1$-Lipschitz thanks to Lemma \ref{lem: pushforward}. This gives
  \begin{equation} \label{eq: sigma_d*} \wass_1\left( \left(\sigma_d
      \right)_{*} \mu_p, \left(\sigma_d \right)_{*} \lambda_{\mmu_d}
    \right) \leqslant 4 \sqrt{3} \sqrt{d} (d+1) |p|^{-\frac{1}{d-1}}.
  \end{equation}

  Finally, in \cite[\S 3]{ultrashort} we showed that when
  $d$ is prime the $\Zz$-module
  $R_{\mmu_d}$ of additive relations among the roots of
  $X^d-1$ is generated by the constant map equal to $1$, so
  $H_{\mmu_d}$ can be identified with
  $(\Ss^1)^{d-1}$. Then it follows formally from the definitions that
  for $q \equiv 1 \mods{d}$ the measure
  $\mu_{q,d}$ coincides with the measure
  $(\sigma_d)_{*}\mu_p$ for all $p$ lying above $q$ (and such
  $p$ belong to
  $\mcS_{\mmu_d}$ thanks to \cite[Cor. 10.4]{neukirch}), and the measure
  $\gamma_d$ coincides with
  $(\sigma_d)_*\lambda_{\mmu_d}$. Therefore, \eqref{eq: sigma_d*}
  actually says that $ \wass_1(\mu_{q,d}, \gamma_d ) \leqslant 4
  \sqrt{3} \sqrt{d} (d+1) q^{-\frac{1}{d-1}}$.
\end{proof}

We will also need a form of the Central Limit Theorem in Wasserstein
metric, which we state in the next Lemma.

\begin{lemma}
  Let $k \geqslant 1$ and let $(X_i)_{i \geqslant 1}$ be a sequence of
  independent and identically distributed random variables taking
  values in $\Rr^k$. Assuming further that they admit a moment of
  order $2$, we denote by
$$
m = \expect(X_1) = \begin{pmatrix}
  \expect(X_{1,1}) \\
  \dots \\
  \expect(X_{1,k})
\end{pmatrix} 
$$
the mean value of $X_1$ and by
$\Sigma = (\sigma_{i,j})_{1 \leqslant i,j \leqslant k}$ the
covariance matrix of $X_1$, meaning that for all
$i,j \in \{1, \dots, k\}$,
$\sigma_{i,j} =\expect((X_{1,i} - \expect(X_{1,i}))(X_{1,j} -
\expect(X_{1,j})))$.  Then if $\mu_n$ denotes the law of
$\frac{(X_1 + \cdots X_n) - nm}{\sqrt{n}}$ we have
$$
\wass_1(\mu_n, \mcN(0, \Sigma)) \underset{n \to
  \infty}{\longrightarrow} 0.
$$
\end{lemma}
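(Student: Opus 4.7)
The plan is to combine the classical multidimensional central limit theorem with a standard principle that upgrades weak convergence on $\Rr^k$ to convergence in the $1$-Wasserstein metric, the additional input being a uniform tail bound. The one potential obstacle is the non-compactness of $\Rr^k$, which means Theorem~\ref{th-wass}, (1) does not apply directly; this is handled by exploiting the second-moment assumption.

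First, I would translate all the variables by $-m$. Since this translation is an isometry of $\Rr^k$, it preserves the $1$-Wasserstein distance (by Theorem~\ref{th-wass}, (3) applied in both directions), so we may assume $m=0$. The classical multidimensional central limit theorem (via the Cram\'er--Wold device, or directly through characteristic functions) then yields weak convergence $\mu_n\Rightarrow \mcN(0,\Sigma)$.

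The main step is to strengthen this weak convergence to $\wass_1$-convergence. On $\Rr^k$, the relevant standard criterion (see, e.g., \cite[Th.\,7.12]{villani}) states that $\wass_1(\mu_n,\mu)\to 0$ holds if and only if $\mu_n\Rightarrow \mu$ weakly and $\int |x|\,d\mu_n\to \int |x|\,d\mu$, equivalently when the first moments are uniformly integrable:
\[
\lim_{R\to+\infty}\,\sup_{n\geq 1}\,\int_{|x|>R}|x|\,d\mu_n(x)=0.
\]

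Verifying this uniform tail bound is where I expect no real difficulty. Since the $X_i$ are independent and centered with covariance $\Sigma$, a direct computation gives $\expect(|Y_n|^2)=\Tr(\Sigma)$ for every $n\geq 1$, where $Y_n\sim \mu_n$. Thus the second moments are uniformly bounded, and the elementary bound $|x|\leq |x|^2/R$ for $|x|>R$ yields $\int_{|x|>R}|x|\,d\mu_n\leq \Tr(\Sigma)/R$, which tends to $0$ uniformly in $n$ as $R\to+\infty$. Combined with the weak convergence above, this produces the desired $\wass_1$-convergence.
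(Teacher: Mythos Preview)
Your proposal is correct and follows essentially the same route as the paper: both arguments invoke the classical multidimensional CLT together with the characterization of $\wass_p$-convergence in~\cite[Th.\,7.12]{villani}, and both rely on the computation $\expect(|Y_n|^2)=\Tr(\Sigma)$ for all~$n$. The only (cosmetic) difference is that the paper applies Villani's criterion at $p=2$, observing that the exact equality of second moments gives $\wass_2(\mu_n,\mcN(0,\Sigma))\to 0$ immediately, and then concludes via $\wass_1\leq\wass_2$; you instead apply the criterion directly at $p=1$ and use the bounded second moments to verify uniform integrability of the first moments.
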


\begin{proof}
  Thanks to \cite[Th. 7.12]{villani}, convergence with respect to the
  $p$-Wasserstein metric is equivalent to the weak convergence of
  measures and the convergence of absolute moments of order
  $p$. However, in the setting of the Central Limit Theorem we have (denoting by $N$ a
  random variable with distribution $\mcN(0, \Sigma)$):
  $$
  \expect\Bigl(\Bigl| \frac{(X_1 + \cdots X_n) - nm}{\sqrt{n}}
  \Bigr|^2\Bigr) = \expect{|N|^2} = \mathrm{Tr}(\Sigma),
  $$
  hence the convergence of absolute moments of order $2$ is
  automatically satisfied. Therefore, the usual Central Limit Theorem
  (see, e.g.,~\cite[Th. 29.5]{billingsley}) which states the weak
  convergence of $\mu_n$ to $\mcN(0, \Sigma)$ immediately gives the
  apparently stronger statement
  $$
  \wass_2(\mu_n, \mcN(0, \Sigma)) \underset{n \to \infty}{\longrightarrow}
  0.
  $$

  Since $\wass_1 \leqslant \wass_2$, the conclusion follows.
\end{proof}

\begin{rem}
  We stated a qualitative result that suffices for our application,
  but there are several articles investigating the rate of convergence
  in the Central Limit Theorem with respect to Wasserstein
  metrics. Under the assumption that $\expect(|X_1|^4)$ is finite,
  \cite[Th.\,1]{bonis20} states that
  $\wass_2(\mu_n, \mcN(0, \Sigma)) \ll_k \frac{1}{\sqrt{n}}\cdot$ The
  dependence of the implicit constant with respect to the dimension
  $k$ is a more subtle question, we refer to \cite{bonis24} and the
  references therein for a recent account.
\end{rem}

\begin{proof}[Proof of Theorem $\ref{th: growing subgroups}$]
  Let $q$ be an odd prime, and $d = d(q)$ a prime divisor of
  $q-1$. By the triangle inequality for the metric $\wass_1$ we have
  $$
  \wass_1(\mu_{q,d}, \mcN(0, \Sigma)) \leqslant \wass_1(\mu_{q,d},
  \gamma_d) + \wass_1(\gamma_d, \mcN(0, \Sigma)).
  $$
  The second term converges to zero when $d$ tends to infinity thanks
  to the central limit theorem (the term
  $(\sqrt{d}Z_1 \dots Z_{d-1})^{-1}$ does not cause any issue because
  it has modulus $1/ \sqrt{d}$ so it converges almost surely to
  zero). Moreover, thanks to Lemma \ref{lem: quantitative duke garcia
    lutz} the first term is upper bounded by
  $ 4\sqrt{3} \sqrt{d} (d+1) q^{-\frac{1}{d-1}}$, so it suffices to
  show that the condition
  $$
  d \underset{q \to +\infty}{=} o \left( \frac{\log q}{\log \log
      q}\right)
  $$
  implies that this upper bound converges to zero as $q$ goes to
  infinity. This comes from the fact that Lambert $W_0$ function
  (which is the inverse bijection to $x \mapsto xe^x$ on
  $\left[- \frac{1}{e}, + \infty\right[$) satisfies
  $W_0(x) \sim_{x \to +\infty} \log(x)$, so the condition above may be
  rewritten as
  $$
  d  \underset{q \to \infty}{=} o \left( \frac{\log q}{W_0(\log q)}\right) \cdot
  $$
  Therefore, $d = \frac{\log q}{W_0(\log q)} \varepsilon(q)$ for some function $\varepsilon$ that tends to zero as $q$ tends to infinity, so
  $$
  d \log d = \frac{\log q}{W_0(\log q)} \varepsilon(q)
  \log\left(\frac{\log q}{W_0(\log q)} \varepsilon(q)\right)$$ and for
  $q$ large enough this is upper bounded by
  $$
  \frac{\log q}{W_0(\log q)} \varepsilon(q) \log\left(\frac{\log q}{W_0(\log q)} \right),
  $$
  but by definition of $W_0$ this is equal to
  $\log(q) \varepsilon(q)$. This shows that
  $d \log(d) \underset{q \to \infty}{=} o(\log q)$ and elementary
  manipulations show that this implies
  $d^{3/2} \underset{q \to \infty}{=} o
  \left(q^{\frac{1}{d-1}}\right)$, concluding the proof.
\end{proof}

Another type of factorization of $d$ for which the limiting
distribution can be determined is when $d$ is a power of fixed
prime. Indeed, when $d$ is of the form $r^b$ where $r$ is a prime
number and $b \geqslant 1$, the Laurent polynomial $g_d$ of
\cite{gaussian_periods,menagerie} can be made more explicit (this
comes from the fact that the coefficients of the cyclotomic polynomial
$\Phi_{r^b}$ are known). Precisely, \cite[Cor. 1]{menagerie} states
that the sums $S_d(a,q)$ become equidistributed with respect to the
pushforward measure of the Haar measure on $(\Ss^1)^{\varphi(r^b)}$
with respect to the Laurent polynomial $g_{r^b}$ defined by
$$
g_{r^b}\left(z_1, z_2, \ldots, z_{\varphi(r^b)}\right)=\sum_{j=1}^{\varphi(r^b)} z_j+\sum_{m=1}^{r^{b-1}} \prod_{\ell=0}^{r-2} z_{m+\ell r^{b-1}}^{-1}.
$$
Rearranging the terms according to their residue classes modulo
$r^{b-1}$ we can rephrase that statement as follows: the sums
$\frac{1}{\sqrt{d}} S_d(q,a)$ become equidistributed with respect to a
measure $\gamma_d$ which the law of a random variable
\begin{equation} \label{eq: sum Zij} \frac{1}{r^{b/2}}\sum_{i =
    1}^{r^{b-1}} Z_{i,1} + \cdots + Z_{i,r-1} + \frac{1}{Z_{i,1}\dots
    Z_{i, r-1}} \cdot
\end{equation}
where
$(Z_{i,j})_{1 \leqslant i \leqslant r^{b-1}, \ 1 \leqslant j \leqslant
  r-1}$ is a family of independent and identically distributed
Steinhaus random variables. By the same arguments as in the proof of Lemma \ref{lem:
  quantitative duke garcia lutz} (we only used the fact that $d$ is
prime to replace $[\Qq(\mmu_d) : \Qq]$ by $d-1$ and to make $\gamma_d$ more explicit, but the lemma holds for
arbitrary $d$, except for the description of the Laurent polynomial
$g_d$), we have
$$
\wass_1(\mu_{q,d}, \gamma_d) \leqslant 4\sqrt{3} \sqrt{d} (d+1) q^{-\frac{1}{\varphi(d)}}.
$$

This upper bound converges to zero as $d$ and $q$ tend to infinity
provided $d = o \left( \frac{\log q}{\log \log q}\right)$. Now, if $r$
is fixed and only $b$ varies, the sum \eqref{eq: sum Zij} may be
rewritten as
$$
\frac{1}{\sqrt{r}} \left(\frac{1}{\sqrt{r^{b-1}}} \sum_{i =
    1}^{r^{b-1}} X_i\right)
$$
where the
$X_i = Z_{i,1} + \cdots + Z_{i,r-1} + \frac{1}{Z_{i,1}\dots Z_{i,
    r-1}}$ are independent and identically distributed random
variables which have mean $0$. Thanks to the Central Limit Theorem, we
have
$$
\frac{1}{\sqrt{r^{b-1}}} \sum_{i = 1}^{r^{b-1}}
X_i\overset{\text{law}}{\longrightarrow} \mcN(0, \Sigma)
$$
where $\Sigma = \frac{r}{2} \mathrm{Id}$ is the covariance matrix of
$X_1$ (viewed as a random variable with values in $\Rr^2$). Taking
into account the factor $1/ \sqrt{r}$ in front of the sum, we obtain
the following result:

\begin{theorem}
  Let $r$ be a fixed prime. For all integers $b$ and all prime numbers
  $q$ such that $d = r^b$ divides $q-1$, we define the sums
  $\frac{1}{\sqrt{d}} S_d(q,a)$ as above. Then as $d$ and $q$ both
  tend to infinity with
  $d = o\left( \frac{\log q}{\log \log q}\right)$, they become
  equidistributed in the complex plane with respect to the normal
  distribution $\mcN(0, \frac{1}{2}\mathrm{Id})$.
\end{theorem}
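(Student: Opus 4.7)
The plan is to follow the same template as the proof of Theorem \ref{th: growing subgroups}. By the triangle inequality for $\wass_1$,
\[
\wass_1(\mu_{q,d}, \mcN(0,\tfrac{1}{2}\mathrm{Id})) \leq \wass_1(\mu_{q,d},\gamma_d) + \wass_1(\gamma_d, \mcN(0,\tfrac{1}{2}\mathrm{Id})),
\]
so it suffices to show that each term tends to zero under the stated hypotheses.

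For the first term I would invoke the quantitative bound established in the paragraph preceding the statement, namely
\[
\wass_1(\mu_{q,d}, \gamma_d) \leq 4\sqrt{3}\sqrt{d}(d+1) q^{-1/\varphi(d)},
\]
which is valid for arbitrary $d$ (the proof of Lemma \ref{lem: quantitative duke garcia lutz} only used primality of $d$ to identify $[\Qq(\mmu_d):\Qq]$ with $d-1$ and to make $\gamma_d$ explicit). Since $r$ is fixed, $\varphi(d) = \varphi(r^b) = \frac{r-1}{r}d$ is a fixed multiple of $d$, so $1/\varphi(d) = \frac{r}{r-1}\cdot\frac{1}{d}$ is comparable to $1/d$. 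The elementary manipulations with the Lambert $W_0$ function carried out in the proof of Theorem \ref{th: growing subgroups}, applied with $\varphi(d)$ in place of $d-1$, show that $d = o(\log q/\log\log q)$ still implies $d^{3/2}q^{-1/\varphi(d)}\to 0$; the fixed factor $\frac{r-1}{r}$ is absorbed harmlessly.

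For the second term I use the explicit description of $\gamma_d$ recalled just before the theorem: writing $d = r^b$, $\gamma_d$ is the law of
\[
\frac{1}{\sqrt{r}}\cdot\frac{1}{\sqrt{r^{b-1}}}\sum_{i=1}^{r^{b-1}} X_i,
\]
where $X_i = Z_{i,1}+\cdots+Z_{i,r-1} + (Z_{i,1}\cdots Z_{i,r-1})^{-1}$ are i.i.d.\ centered random variables (viewed in $\Rr^2$) with covariance $\Sigma = \frac{r}{2}\mathrm{Id}$. The Wasserstein form of the Central Limit Theorem proved in the previous lemma gives that, as $b\to\infty$, the law of $\frac{1}{\sqrt{r^{b-1}}}\sum_{i} X_i$ converges in $\wass_1$ to $\mcN(0,\frac{r}{2}\mathrm{Id})$. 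Since scalar multiplication by $1/\sqrt{r}$ is a $\frac{1}{\sqrt{r}}$-Lipschitz map on $\Rr^2$, Theorem \ref{th-wass}, (3) transports this convergence to $\wass_1(\gamma_d,\mcN(0,\tfrac12\mathrm{Id}))\to 0$ as $b\to\infty$, which happens automatically since $d = r^b \to \infty$.

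The main (mild) obstacle is checking that the $X_i$ do have the claimed covariance $\frac{r}{2}\mathrm{Id}$: each of the $r$ summands is marginally uniform on $\Ss^1$ (including the inverse of the product, which is independent of each individual $Z_{i,j}$ in the sense that $\expect[Z_{i,j}\overline{(Z_{i,1}\cdots Z_{i,r-1})^{-1}}] = \expect[Z_{i,j}^2]\prod_{\ell\neq j}\expect[Z_{i,\ell}] = 0$), so the contributions to the real and imaginary variances add without cross terms to give $r\cdot\frac12 = \frac{r}{2}$. Beyond this, the argument is a direct composition of the quantitative bound of Lemma \ref{lem: quantitative duke garcia lutz}, the CLT in Wasserstein distance, and the Lipschitz inequality of Theorem \ref{th-wass}.
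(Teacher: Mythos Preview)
Your proposal is correct and follows essentially the same approach as the paper: the paper also splits via the triangle inequality into $\wass_1(\mu_{q,d},\gamma_d)$ (handled by the quantitative bound $4\sqrt{3}\sqrt{d}(d+1)q^{-1/\varphi(d)}$ together with the Lambert-function manipulation from Theorem~\ref{th: growing subgroups}) and $\wass_1(\gamma_d,\mcN(0,\tfrac12\mathrm{Id}))$ (handled by the CLT applied to the i.i.d.\ blocks $X_i$ with covariance $\tfrac{r}{2}\mathrm{Id}$, then rescaling by $1/\sqrt{r}$). Your added details---the explicit use of the Lipschitz property of scaling by $1/\sqrt{r}$ and the uncorrelatedness check for the $r$ summands of $X_i$---are elaborations the paper leaves implicit, but the structure is identical.
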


\section{Equidistribution theorems for trace
  functions}\label{sec-deligne}

The Chebotarev Density Theorem and Deligne's Equidistribution Theorem
are prototypes of some fundamental results in number theory, and play
especially important roles as bridges between analytic and algebraic
aspects. Especially in the setting of finite fields, where Deligne's
Riemann Hypothesis is available, these statements are proved using
direct powerful estimates for Weyl sums, in the context of compact Lie
groups, which are usually non-abelian. They are therefore natural
targets for quantitative versions.

There are already results of this type in the literature, among which
we mention the paper~\cite{niederreiter} of Niederreiter (for
Kloosterman sums), the papers~\cite{fouvry-michel1}
and~\cite{fouvry-michel2} of Fouvry and Michel (for more general
exponential sums, including almost prime moduli), the paper~\cite{xi}
of Xi concerning Jacobi sums, and the recent work of Fu, Lau and
Xi~\cite{fulauxi} which provides a general statement for a suitable
discrepancy.  We will derive quite quickly a basic result using the
analogue for compact connected Lie groups of the Bobkov--Ledoux
theorem, due to Borda~\cite[Th.\,1]{borda_berry_esseen}.

\subsection{Borda's inequality}

In this section, we fix a compact connected Lie group~$K$, given with a
faithful finite-dimensional (unitary) representation, which we view as
an inclusion $K\subset \Un_m(\Cc)$ for some integer~$m\geq 1$.

We equip~$K$ with a Riemannnian metric induced by an Ad-invariant
positive definite bilinear form on $\mathrm{Lie}(K)$, and we assume that
this metric is normalized so that the associated Riemannian volume
measure is the probability Haar measure~$\mu_K$ on~$K$.
We denote by $\widehat{K}$ a set of representatives of irreducible
unitary representations of~$K$ (which can be identified with the set
of the characters of these representations). The structure theory of
compact Lie groups shows that there exists an integer~$r\geq 0$, the
rank of~$K$, such that these representations are parameterized by
vectors $\lambda$ in a cone~$\Lambda_K^+$ in an $r$-dimensional free
abelian group~$\Lambda_K$ (see,
e.g.,~\cite[p.\,67,\,Th.\,1]{lie9}). We denote by~$\rho_{\lambda}$ the
irreducible representation associated to~$\lambda$, and we usually
identify $\widehat{K}$ and~$\Lambda_K^+$. Further, we denote by
$\kappa(\lambda)$ the Casimir--Laplace eigenvalue of $\rho_{\lambda}$.

Given a probability measure~$\mu$ on~$K$ which is invariant under
conjugation and $\lambda\in\widehat{K}$, we denote by
\[
  \widehat{\mu}(\lambda)=\int_K
  \overline{\Tr(\rho_{\lambda}(g))}d\mu(g)
\]
the $\lambda$-Fourier coefficient of~$\mu$.

Finally, we  denote by $\|\lambda\|$ the natural euclidian norm
on~$\Lambda_K\otimes\Rr$.


\begin{theorem}[Borda]\label{th-borda-conjugacy}
  With notation as above, let~$\mu$ and~$\nu$ be conjugacy-invariant
  probability measures on~$K$. Let~$T\geq 1$ be a parameter. We have
  \[
    \wass_1(\mu,\nu)\ll T^{-1}+\Bigl( \sum_{1\leq \|\lambda\|\leq
      T} \frac{1}{\kappa(\lambda)}
    |\widehat{\mu}(\lambda)-\widehat{\nu}(\lambda)|^2 \Bigr)^{1/2}
  \]
  where the sum is over vectors $\lambda\in\Lambda_K^+$ parametrizing
  irreducible unitary representations~$\rho_{\lambda}$ of~$K$ and the
  implied constant depends only on~$K$.
\end{theorem}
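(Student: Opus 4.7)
The plan is to begin from the Kantorovich--Rubinstein duality of part \emph{(6)} of Theorem~\ref{th-wass}:
$$
\wass_1(\mu,\nu) \;=\; \sup_{u}\Bigl|\int_K u\,d(\mu-\nu)\Bigr|,
$$
the supremum being over $1$-Lipschitz real-valued functions $u$ on $K$. Since $\mu$ and $\nu$ are both conjugacy-invariant, one may replace $u$ by its conjugacy average $u^\sharp(g)=\int_K u(hgh^{-1})\,d\mu_K(h)$ without changing either integral; and because the Riemannian metric on $K$ comes from an $\mathrm{Ad}$-invariant form on $\mathrm{Lie}(K)$, conjugation is an isometry, so $u^\sharp$ is again $1$-Lipschitz. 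Subtracting a constant (harmless for $\mu-\nu$), we may also assume $\int u^\sharp\,d\mu_K=0$. Then $u^\sharp$ is a zero-mean class function and admits a Peter--Weyl expansion $u^\sharp=\sum_{\lambda\ne 0} c_\lambda \chi_\lambda$ in the orthonormal basis of irreducible characters $\chi_\lambda=\Tr\rho_\lambda$, which by the identity $\int_K \chi_\lambda\,d\mu=\overline{\widehat{\mu}(\lambda)}$ yields
$$
\int_K u^\sharp\,d(\mu-\nu)\;=\;\sum_{\lambda\ne 0} c_\lambda\bigl(\overline{\widehat{\mu}(\lambda)}-\overline{\widehat{\nu}(\lambda)}\bigr).
$$

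The core input from regularity is the Sobolev-type identity
$$
\sum_{\lambda}\kappa(\lambda)\,|c_\lambda|^2 \;=\; \|\nabla u^\sharp\|_{L^2(\mu_K)}^2 \;\leq\; 1,
$$
which uses that the Casimir operator acts on $\chi_\lambda$ as the scalar $\kappa(\lambda)$ and that the $1$-Lipschitz hypothesis forces $|\nabla u^\sharp|\leq 1$ pointwise a.e. Were the Fourier expansion truncated to $\|\lambda\|\leq T$, this bound together with Cauchy--Schwarz applied with weights $\kappa(\lambda)^{\pm 1/2}$ would already produce the claimed sum. The plan is therefore to approximate $u^\sharp$ by a band-limited truncation: set $u^\sharp_T=u^\sharp\ast K_T$, where $K_T$ is a central Jackson-type kernel on $K$ satisfying $\widehat{K_T}(\lambda)\in[0,1]$, $\widehat{K_T}(0)=1$, and $\widehat{K_T}(\lambda)=0$ for $\|\lambda\|>T$. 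The Fourier coefficients of $u^\sharp_T$ are then $\widehat{K_T}(\lambda)c_\lambda$, so the Sobolev bound above is preserved (the coefficients only shrink), while a Jackson-type estimate yields $\|u^\sharp-u^\sharp_T\|_{L^\infty}\ll T^{-1}$ since $u^\sharp$ is $1$-Lipschitz.

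Once such a kernel is in hand, the proof concludes cleanly. Split
$$
\int u^\sharp\,d(\mu-\nu)\;=\;\int(u^\sharp-u^\sharp_T)\,d(\mu-\nu)+\int u^\sharp_T\,d(\mu-\nu),
$$
bound the first term by $2\|u^\sharp-u^\sharp_T\|_{L^\infty}\ll T^{-1}$, and apply Cauchy--Schwarz to the second with weights $\kappa(\lambda)^{\pm 1/2}$ to obtain a bound of the form
$$
\Bigl(\sum_\lambda \kappa(\lambda)\,|\widehat{K_T}(\lambda)c_\lambda|^2\Bigr)^{1/2}\Bigl(\sum_{1\leq\|\lambda\|\leq T}\frac{|\widehat{\mu}(\lambda)-\widehat{\nu}(\lambda)|^2}{\kappa(\lambda)}\Bigr)^{1/2},
$$
whose first factor is $\leq 1$ and whose second is exactly the target quantity. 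The hard part, and where the $K$-dependent implicit constants enter, is the construction of the central Jackson kernel $K_T$ and the verification of the uniform pointwise approximation estimate $\|u^\sharp-u^\sharp\ast K_T\|_{L^\infty}\ll T^{-1}$ over $1$-Lipschitz class functions: this is where the Weyl dimension formula, Weyl's integration formula, and control of the geometry of the character lattice $\Lambda_K^+$ all come into play, and is the technical heart of Borda's extension of the Bobkov--Ledoux inequality from tori to arbitrary compact connected Lie groups.
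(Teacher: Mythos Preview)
Your outline is correct, but it takes a different route from the paper. The paper does not reprove Borda's inequality: it simply quotes Borda's general result~\cite[Th.\,1]{borda_berry_esseen}, valid for \emph{arbitrary} probability measures on~$K$, in which the Fourier coefficients are matrix-valued ($\widetilde{\mu}(\lambda)=\int_K\rho_\lambda(g)^*d\mu(g)\in\End(\rho_\lambda)$) and the summand is $\dim(\rho_\lambda)\kappa(\lambda)^{-1}\|\widetilde{\mu}(\lambda)-\widetilde{\nu}(\lambda)\|_{HS}^2$; the only work is then to observe that conjugacy-invariance forces $\widetilde{\mu}(\lambda)$ to be scalar by Schur's Lemma, namely $\dim(\rho_\lambda)^{-1}\widehat{\mu}(\lambda)\,\mathrm{Id}$, after which $\|\mathrm{Id}\|_{HS}^2=\dim(\rho_\lambda)$ collapses the summand to $\kappa(\lambda)^{-1}|\widehat{\mu}(\lambda)-\widehat{\nu}(\lambda)|^2$. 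You instead specialize \emph{first}, averaging the test function over conjugacy classes to land in the space of class functions, and then run the Bobkov--Ledoux/Borda argument directly there (Peter--Weyl in the character basis, Sobolev/Casimir control, Jackson-type truncation, Cauchy--Schwarz). This is cleaner in that the Fourier analysis stays scalar-valued throughout and avoids the Hilbert--Schmidt bookkeeping; on the other hand it still relies on the existence of a central Jackson kernel with the stated approximation property, which --- as you correctly flag --- is exactly the substantive content of Borda's paper, so you have not really avoided the black box, only moved where it is invoked.
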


\begin{proof}
  Borda~\cite[Th.\,1]{borda_berry_esseen} proves that
  \[
    \wass_1(\mu,\nu)\ll T^{-1}+\Bigl( \sum_{1\leq \|\lambda\|\leq
      T} \frac{\dim(\rho_{\lambda})}{\kappa(\lambda)}
    \|\widetilde{\mu}(\lambda)-\widetilde{\nu}(\lambda)\|_{HS}^2
    \Bigr)^{1/2}
  \]
  where 
  \[
    \widetilde{\mu}(\lambda)=\int_K\rho_{\lambda}(g)^*d\mu(g)
  \]
  (which is an element of~$\End(\rho_{\lambda})$) is the Fourier
  coefficient of~$\mu$ at~$\lambda$,
  with~$u\mapsto\|u\|_{HS}=(\Tr(u^*u))^{1/2}$ denoting the
  Hilbert--Schmidt norm on~$\End(\rho_{\lambda})$.

  Let~$\lambda\in\widehat{K}$.  Since~$\mu$ is conjugacy-invariant, the
  linear map $\widetilde{\mu}(\lambda)$ is a $K$-endomorphism
  of~$\rho_{\lambda}$ (see, e.g.,~\cite[p.\,402]{ts345}) hence is a
  multiple of the identity by Schur's Lemma (see,
  e.g.,~\cite[p.\,386,\,prop.\,6]{ts345}). Taking the trace, we obtain
  the (standard) formula
  \[
    \widetilde{\mu}(\lambda)=\frac{1}{\dim(\rho_{\lambda})}
    \int_K\overline{\Tr(\rho_{\lambda}(g))}d\mu(g)=
    \frac{1}{\dim(\rho_{\lambda})}\widehat{\mu}(\lambda) \mathrm{Id},
  \]
  and similarly for~$\nu$. We deduce that
  \[
    \wass_1(\mu,\nu)\ll T^{-1}+\Bigl( \sum_{1\leq \|\lambda\|\leq T}
    \frac{1}{\dim(\rho_{\lambda})\kappa(\lambda)}
    |\widehat{\mu}(\lambda)-\widehat{\nu}(\lambda)|^2\|\mathrm{Id}\|_{HS}^2
    \Bigr)^{1/2},
  \]
  and we finally conclude since
  $\|\mathrm{Id}\|_{HS}^2=\Tr(\mathrm{Id}^*\mathrm{Id})=\dim(\rho_{\lambda})$.
\end{proof}

\begin{remark}
  (1) The restriction to~$K$ connected is quite natural from the
  technical point of view, but somewhat restrictive for applications
  like those we are going to describe, since it is quite possible that
  the compact group in these cases is not connected.  We hope to
  provide an extension of Borda's inequality to the general case in a
  future work.

  (2) It should be possible to obtain an estimate uniform in terms
  of~$K$, and we also hope to come back to this later.
\end{remark}

For the application to equidistribution, we will consider in particular
the conjugacy-invariant probability measures $\delta_{g^{\sharp}}$ which
are defined for a conjugacy class $g^{\sharp}\subset K$ of~$g$ by the
integration formula
\[
  \int_K f(x)d\delta_{g^{\sharp}}(x)=\int_Kf(xgx^{-1})d\mu_K(x)
\]
for $f\colon K\to\Cc$ continuous (in other words, this is the image
measure of $\mu_K$ under the map $x\mapsto xgx^{-1}$). We call this
measure the \emph{uniform measure on the conjugacy class of~$g$}.  It
has the property that if $K^{\sharp}$ denotes the space of conjugacy
classes of~$K$ (which is a compact space in our case), the pushforward
of~$\delta_{g^{\sharp}}$ by the projection $K\to K^{\sharp}$ is the
delta mass at the given conjugacy class.

\begin{example}
  If~$K$ is finite then one finds easily that
  \[
    \delta_{g^{\sharp}}(\{x\})=\begin{cases}
      0&\text{ if } x\notin g^{\sharp}\\
      \frac{1}{|g^{\sharp}|}&\text{ if } x\in g^{\sharp}.
    \end{cases}
  \]

  This justifies our terminology.
\end{example}

\subsection{Deligne's equidistribution theorem}

We start by considering Deligne's Theorem in the case of curves.

\begin{theorem}\label{th-deligne}
  Let~$\ell$ be a prime number.  Let
  $(k_i,X_i,\mathcal{F}_i)_{i\in I}$ be an infinite sequence of data
  consisting of a finite field $k_i$ of characteristic different
  from~$\ell$, a smooth connected quasi-projective curve $X_i/k_i$ and
  a lisse $\ell$-adic sheaf $\mathcal{F}_i$ of weight~$0$
  on~$X_i$. Suppose that the following conditions are satisfied:
  \begin{enumth}
  \item we have $X_i(k_i)\not=\emptyset$ and $|X_i(k_i)|\to +\infty$,
  \item the complexity $c(\mathcal{F}_i)$ in the sense
    of~\cite[Def.\,3.2]{sffk} is bounded for all~$i$,
  \item the arithmetic and geometric monodromy groups
    of~$\mathcal{F}_i$ are equal, are independent of~$i$, and are
    connected for all~$i$.
  \end{enumth}

  Let~$K$ be a connected compact Lie group isomorphic to a maximal
  compact subgroup of the common arithmetic and geometric monodromy
  groups of the sheaves $\mathcal{F}_i$.

  For~$i\in I$ and~$x\in X_i(k_i)$, let~$\theta(x)\in K^{\sharp}$
  denote the unitarized Frobenius conjugacy class of~$x$ acting
  on~$\mathcal{F}_i$, which is well-defined because of the condition
  that~$\mathcal{F}_i$ is of weight~$0$.

  Then the conjugacy-invariant probability measures on~$K$ defined by
  \[
    \mu_i=\frac{1}{|X_i(k_i)|}
    \sum_{x\in X_i(k_i)}\delta_{\theta(x)}
  \]
  satisfy
  \begin{equation}\label{eq-wass-conj-class}
    \wass_1(\mu_i,\mu_K)\ll |k_i|^{-1/\dim(K)}
  \end{equation}
  for all~$i$. Furthermore, we have
  \begin{equation}\label{eq-wass-exp-sums}
    \wass_1(\Tr_*\mu_i,\Tr_*\mu_K)\ll |k_i|^{-1/\dim(K)}.
  \end{equation}
  
  In both cases, the implied constants depends on~$K$ and on the bound
  for the complexity of~$\mathcal{F}_i$.
\end{theorem}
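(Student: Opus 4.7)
My plan is to apply Borda's inequality (Theorem~\ref{th-borda-conjugacy}) to the pair of conjugacy-invariant probability measures $(\mu_i,\mu_K)$. Since $\mu_K$ is the Haar measure, Schur's orthogonality gives $\widehat{\mu_K}(\lambda)=0$ for every nontrivial $\lambda\in\Lambda_K^+$, so only the Fourier coefficients
\[
  \widehat{\mu_i}(\lambda)
  =\frac{1}{|X_i(k_i)|}\sum_{x\in X_i(k_i)}\overline{\Tr(\rho_\lambda(\theta(x)))}
\]
remain to be estimated (the equality uses that $\Tr\circ\rho_\lambda$ is a class function, so integration against $\delta_{\theta(x)}$ just evaluates it on the conjugacy class of $\theta(x)$). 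For nontrivial $\lambda$, the composition $\mathcal{G}_{\lambda,i}=\rho_{\lambda}\circ\mathcal{F}_i$ is lisse of weight~$0$, and hypothesis~\emph{(3)} guarantees that it is geometrically irreducible and nontrivial, so $H^0_c(\overline{X_i},\mathcal{G}_{\lambda,i})=H^2_c(\overline{X_i},\mathcal{G}_{\lambda,i})=0$. The Grothendieck--Lefschetz trace formula combined with Deligne's Riemann Hypothesis on $H^1_c$, together with the Euler--Poincaré formula on a curve and the bound $\swan_x(\mathcal{G}_{\lambda,i})\leq\dim(\rho_{\lambda})\swan_x(\mathcal{F}_i)$, should then yield
\[
  |\widehat{\mu_i}(\lambda)|\ll\dim(\rho_{\lambda})\,|k_i|^{-1/2},
\]
with implied constant depending only on the uniform bound on $c(\mathcal{F}_i)$ (using hypothesis~\emph{(1)} to write $|X_i(k_i)|\asymp|k_i|$).

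Inserting this into Theorem~\ref{th-borda-conjugacy} and using the Weyl dimension formula $\dim(\rho_{\lambda})\ll(1+\|\lambda\|)^{(\dim K-r)/2}$ together with the standard lower bound $\kappa(\lambda)\gg\|\lambda\|^2$, I dominate the spectral sum by
\[
  \sum_{1\leq\|\lambda\|\leq T}\frac{\dim(\rho_{\lambda})^2}{\kappa(\lambda)}
  \ll\sum_{1\leq\|\lambda\|\leq T}\|\lambda\|^{\dim K-r-2}
  \ll T^{\dim K-2},
\]
the last step being a lattice-point count in the rank-$r$ lattice $\Lambda_K$. Consequently
\[
  \wass_1(\mu_i,\mu_K)\ll T^{-1}+T^{(\dim K-2)/2}\,|k_i|^{-1/2},
\]
and choosing $T=|k_i|^{1/\dim K}$ balances the two terms, proving~\eqref{eq-wass-conj-class}. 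The bound~\eqref{eq-wass-exp-sums} then follows immediately from Theorem~\ref{th-wass}\,\emph{(3)} applied to the trace map $\Tr\colon K\to\Cc$, which is the restriction of a $\Cc$-linear form on $\End(\Cc^m)$ and so is globally Lipschitz (with a constant comparing the bi-invariant Riemannian metric on $K$ to the ambient Hilbert--Schmidt norm).

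The main obstacle lies in the cohomological input, namely the uniform bound $|\widehat{\mu_i}(\lambda)|\ll\dim(\rho_{\lambda})\,|k_i|^{-1/2}$: one must check that the sum of Betti numbers of $\rho_{\lambda}\circ\mathcal{F}_i$ grows \emph{linearly} rather than polynomially in $\dim(\rho_{\lambda})$ (using the linear Swan-conductor bound just recalled, together with the Euler--Poincaré formula on a curve), and one must confirm the vanishing of $H^0_c$ and $H^2_c$, which is where the connectedness and equality of the monodromy groups in hypothesis~\emph{(3)} enter essentially. Once this linear dependence on $\dim(\rho_{\lambda})$ is in hand, the remaining steps are elementary bookkeeping, and the exponent $-1/\dim(K)$ is forced by the balance between the Weyl-dimension growth $\|\lambda\|^{\dim K-r}$ and the quadratic growth of $\kappa(\lambda)$.
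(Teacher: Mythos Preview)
Your proposal is correct and follows essentially the same route as the paper: apply Borda's inequality, bound the Weyl sums $\widehat{\mu_i}(\lambda)$ by $\dim(\rho_\lambda)\,|k_i|^{-1/2}$ via Deligne's Riemann Hypothesis together with a linear Betti-number estimate on curves, then insert the Weyl dimension formula and the Casimir lower bound to optimize in~$T$. The only cosmetic difference is that you sketch the Betti-number bound through the Euler--Poincar\'e formula and the Swan-conductor inequality $\swan_x(\rho_\lambda\circ\mathcal{F}_i)\leq\dim(\rho_\lambda)\swan_x(\mathcal{F}_i)$, whereas the paper simply quotes this linear estimate as~\cite[Rem.\,6.34]{sffk}.
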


\begin{proof}
  We will apply Borda's inequality as given in
  Theorem~\ref{th-borda-conjugacy}. We first estimate the Fourier
  coefficients. For $\lambda\in \widehat{K}$ non-trivial, we have of
  course $\widehat{\mu}_K(\lambda)=0$. On the other hand, by
  definition, the formula
  \[
    \widehat{\mu}_i(\lambda)
    =\frac{1}{|X_i(k_i)|}
    \sum_{x\in X_i(k_i)}\Tr(\rho_{\lambda}(\theta(x)))
  \]
  holds for all~$i$.  By the usual application of the formalism of
  trace functions and the Riemann Hypothesis over finite fields (see,
  for instance, Katz's account in~\cite[Ch.\,3]{gkm} or the one in
  Katz--Sarnak~\cite[Th.\,9.2.6]{katz-sarnak}), we obtain
  \begin{equation}\label{eq-basic-weyl-bound}
    |\widehat{\mu}_i(\lambda)-\widehat{\mu}_K(\lambda)|=
    |\widehat{\mu}_i(\lambda)|\leq
    |k_i|^{-1/2}\sigma_c(X_{i,\bar{k}},\rho_{\lambda}(\mathcal{F}_i)),
  \end{equation}
  where
  \[
    \sigma_c(X_{i,\bar{k}},\rho_{\lambda}(\mathcal{F}_i))=
    \sum_j \dim H^j_c(X_{i,\bar{k}},\rho_{\lambda}(\mathcal{F}_i))
  \]
  is the sum of Betti numbers of the sheaf associated to
  $\rho_{\lambda}$ and~$\mathcal{F}_i$.
  
  Since~$X_i$ is a curve, the upper bound
  \[
    \sigma_c(X_{i,\bar{k}},\rho_{\lambda}(\mathcal{F}_i))\ll
    \dim(\rho_{\lambda}) c(\mathcal{F}_i)
  \]
  holds by~\cite[Rem.\,6.34]{sffk}.

  For any value of the parameter~$T\geq 1$,
  Theorem~\ref{th-borda-conjugacy} then gives
  \[
    \wass_1(\mu_i,\mu_K)\ll
    \frac{1}{T}+\frac{c(\mathcal{F}_i)}{|k_i|^{1/2}} \Bigl(\sum_{1\leq
      \|\lambda\|\leq T}\frac{\dim(\rho_{\lambda})^2}{\kappa(\lambda)}
    \Bigr)^{1/2}.
  \]

  It is known that $\kappa(\lambda)\geq \|\lambda\|^2$ (see,
  e.g.,~\cite[p.\,77,\,Prop.\,4]{lie9}) and that
  $\dim(\rho_{\lambda})=O(\|\lambda\|^{(n-r)/2})$ (see,
  e.g.,~\cite[p.\,76,\,Cor.\,1]{lie9}), where $n$ is the dimension
  of~$K$ (and~$r$ is again its rank). Thus
  \[
    \sum_{1\leq \|\lambda\|\leq
      T}\frac{\dim(\rho_{\lambda})^2}{\kappa(\lambda)}= O\Bigl(
    \sum_{1\leq \|\lambda\|\leq T}\|\lambda\|^{n-r-2} \Bigr)=O(T^{n-2}),
  \]
  for $T\geq 1$, where the implied constant depends only on~$K$ (using
  simply the bound $O(T^r)$ for the number of lattice points in a ball
  of radius~$T$ in a lattice of rank~$r$). Hence
  \[
    \wass_1(\mu_i,\mu_K)\ll
    \frac{1}{T}+\frac{c(\mathcal{F}_i)}{|k_i|^{1/2}} T^{(n-2)/2}.
  \]

  Since we assumed that $c(\mathcal{F}_i)\ll 1$, picking
  $T=|k_i|^{1/n}$ leads to
  \[
    \wass_1(\mu_i,\mu_K)\ll |k_i|^{-1/n}=|k_i|^{-1/\dim(K)},
  \]
  as claimed.

  Since it is straightforward that the trace $K\to \Cc$ is Lipschitz,
  we deduce the bound
  $ \wass_1(\Tr_*\mu_i,\Tr_*\mu_K)\ll |k_i|^{-1/\dim(K)}$ by
  Theorem~\ref{th-wass}, (3).
\end{proof}

The formulation of the theorem may seem awkward, but it was chosen to
apply both in ``vertical'' and ``horizontal'' directions. We illustrate
these with some basic examples of each case. Before doing so, we point
out that~(\ref{eq-wass-exp-sums}) gives equidistribution for the
measures
\[
  \Tr_*\mu_i=\frac{1}{|X_i(k_i)|} \sum_{x\in
    X_i(k_i)}\delta_{\Tr(\theta(x))},
\]
which are typically the discrete measures associated to values of
families of exponential sums.

\begin{example}
  (1) [``Vertical'' equidistribution] In this setting, we consider a
  base finite field~$k$, algebraic curve~$X/k$ and lisse $\ell$-adic
  sheaf~$\mathcal{F}$ over~$X$, and we consider the family
  $(k_{n},X\times k_n,\mathcal{F}\otimes k_n)_{n\geq 1}$, where $k_n$
  denotes an extension of degree~$n$ of~$k$. The only assumption
  required to apply Theorem~\ref{th-deligne} in this case is that
  $\mathcal{F}$ has equal geometric and arithmetic monodromy groups, and
  that these are connected.

  This is the situation considered in the recent paper of Fu, Lau and Xi
  (see~\cite[Th.\,1.2]{fulauxi}), where the quality of equidistribution
  is expressed in terms of a discrepancy for certain boxes in the space
  of conjugacy classes of~$K$, identified with the quotient of a maximal
  torus by the Weyl group of~$K$. This work actually applies to an
  arbitrary base variety, and we provide a corresponding version of
  Theorem~\ref{th-deligne} in Theorem~\ref{th-deligne2}. Although the
  results are not directly comparable, we can observe that the bound
  they obtain for the discrepancy is of size $|k_n|^{-1/(2(|R^+|+1))}$,
  where $R^+$ is the set of positive roots of~$K$. The classical formula
  $\dim(K)=2|R^+|+r$ shows that our exponent is usually slightly worse,
  although we do obtain a bound of size $|k_i|^{-1/3}$ instead of
  $|k_i|^{-1/4}$ in the special case of $\SU_2(\Cc)$, which does occur
  in important cases such as classical Kloosterman sums or Birch sums.

  We spell out our result in the case of hyper-Kloosterman
  sums. Let~$r\geq 2$ be an integer. Let $\psi$ be a fixed non-trivial
  additive character of the base field~$k$, and let
  $\psi_n=\psi\circ \Tr_{k_n/k}$ be the corresponding character
  of~$k_n$. For $n\geq 1$ and $a\in k_n^{\times}$, let
  \[
    \Kl_r(a;k_n)=\frac{1}{|k_n|^{(r-1)/2}} \sum_{\substack{x_1,\ldots,
          x_r\in k_n^{\times}\\x_1\cdots x_r=a}}\psi_n(x_1+\cdots+x_r).
  \]

  Deligne constructed a lisse sheaf $\mathcal{K}_r$ on $X=\Gg_m/k$, pure
  of weight~$0$, with trace function equal to $a\mapsto \Kl_r(a;k)$, and
  Katz~\cite[Th.\,11.1]{gkm} proved that if the characteristic of~$k$ is
  odd, then this sheaf has geometric and monodromy groups isomorphic
  to~$\SL_r$ if~$r$ is odd, and~$\Sp_{r}$ if~$r$ is even. Thus the
  theorem applies with~$K=\SU_r(\Cc)$ if~$r$ is odd
  and~$K=\USp_{r}(\Cc)$ if~$r$ is even (in particular, with
  $K=\SU_2(\Cc)$ if~$r=2$).  Thus
  \[
    \wass_1\Bigl(\frac{1}{|k_n^{\times}|}\sum_{a\in k_n^{\times}}
    \delta_{\Kl_r(a;k_n)},\Tr_*\mu_K\Bigr) \ll |k_n|^{-1/\dim(K)},
  \]
  with
  \[
    \dim(K)=\begin{cases}
      \frac{r(r-1)}{2}&\text{ if $r$ is even},\\
      r^2-1&\text{ if $r$ is odd.}
    \end{cases}
  \]

  In the case~$r=2$, a specific feature of $\SU_2(\Cc)$ is that the
  trace map directly identifies the space of conjugacy classes with
  the interval $[-2,2]$, and the probability Haar measure with the
  Sato--Tate measure
  \[
    \frac{1}{\pi}\sqrt{1-\frac{x^2}{4}}dx
  \]
  on $[-2,2]$ (see, e.g.,~\cite[p.\,58,\,exemple]{lie9}).  Thus
  \[
    \wass_1\Bigl(\frac{1}{|k_n^{\times}|}\sum_{a\in k_n^{\times}}
    \delta_{\Kl_2(a;k_n)},\frac{1}{\pi}\sqrt{1-\frac{x^2}{4}}dx\Bigr)
    \ll |k_n|^{-1/3},
  \]
  where the Wasserstein distance is for measures on $[-2,2]$.

  (2) [``Horizontal equidistribution''] In this setting, we consider a
  smooth geometrically connected algebraic curve $X/\Zz[1/N]$ for some
  integer~$N\geq 1$, and the finite fields $\Ff_p$ for primes
  $p\to +\infty$ (possibly in a subsequence), and for each
  $p\not=\ell N$ we assume given a sheaf $\mathcal{F}_{p}$ on $X/\Ff_p$
  in such a way that the complexity of $\mathcal{F}_p$ remains bounded
  independently of~$p$, and the geometric and arithmetic monodromy
  groups of $\mathcal{F}_p$ are equal, connected, and independent
  of~$p$. Denoting by $\theta(x;p)$ the conjugacy class in~$K$
  corresponding to a point $x\in X(\Ff_p)$, we obtain from
  Theorem~\ref{th-deligne} the estimate
  \[
    \wass_1\Bigl(\frac{1}{|X(\Ff_p)|}\sum_{x\in X(\Ff_p)}
    \delta_{\Tr(\theta(x;p))},\mu_K\Bigr) \ll p^{-1/\dim(K)}
  \]
  for all~$p$ such that~$X(\Ff_p)$ is non-empty.
  
  This setting applies again to the Kloosterman sheaves over $k=\Ff_p$
  for $p$ varying, since these have bounded complexity (see,
  e.g.,~\cite[Prop.\,7.8,\,(1)]{sffk}). Thus, for~$r\geq 2$, we have
  the bound
  \[
    \wass_1\Bigl(\frac{1}{p-1}\sum_{a\in \Ff_p^{\times}}
    \delta_{\Kl_r(a;p)},\Tr_*\mu_K\Bigr) \ll p^{-1/\dim(K)},
  \]
  where~$K$ is either~$\SU_r(\Cc)$ or~$\USp_r(\Cc)$, as in the
  previous example.

  Here is a concrete application to a ``shrinking target'' problem for
  hyper-Kloosterman sums.

  \begin{proposition}\label{pr-hyper-quant}
    Let~$r\geq 3$ be an odd integer. Let~$g_0\in \SU_r(\Cc)$ be a matrix
    with at least three different eigenvalues.

    There exists a constant~$c>0$ and $p_0\geq 3$ such that, for any
    prime number~$p\geq p_0$, the estimate
    \[
      |\{a\in\Ff_p^{\times}\,\mid\, |\Kl_r(a;p)-\Tr(g_0)|\leq
      cp^{-\tfrac{1}{3(r^2-1)}}\}|\gg p^{1-\tfrac{2}{3(r^2-1)}}
    \]
    holds. In particular, for $p$ large enough, there exists
    $a\in\Ff_p^{\times}$ such that
    \[
      |\Kl_r(a;p)-\Tr(g_0)|\leq cp^{-\tfrac{1}{3(r^2-1)}}.
    \]
  \end{proposition}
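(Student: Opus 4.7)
The plan is to combine the Wasserstein bound for hyper-Kloosterman sums established just before the proposition with Kantorovich--Rubinstein duality, tested against a Lipschitz bump concentrated near $\Tr(g_0)$.

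First I would invoke the horizontal equidistribution from Theorem~\ref{th-deligne} for the Kloosterman sheaf $\mathcal{K}_r$, which gives
\[
  \wass_1(\nu_p,\Tr_*\mu_K)\ll p^{-1/(r^2-1)}
\]
where $K=\SU_r(\Cc)$ (since $r$ is odd) and $\nu_p=(p-1)^{-1}\sum_{a\in\Ff_p^\times}\delta_{\Kl_r(a;p)}$.

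Next, for a parameter $R>0$ to be chosen, I would test against the $1$-Lipschitz function $\varphi_R(z)=\max(0,R-|z-\Tr(g_0)|)$, which is supported in $B(\Tr(g_0),R)$ and satisfies $0\leq\varphi_R\leq R\cdot\mathbf{1}_{B(\Tr(g_0),R)}$. Kantorovich--Rubinstein duality (Theorem~\ref{th-wass}, (6)) then yields
\[
  \frac{R}{p-1}\,N_R\geq\int\varphi_R\,d\nu_p\geq\int\varphi_R\,d(\Tr_*\mu_K)-O\!\left(p^{-1/(r^2-1)}\right),
\]
where $N_R=\bigl|\{a\in\Ff_p^\times:|\Kl_r(a;p)-\Tr(g_0)|\leq R\}\bigr|$.

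The main obstacle is to prove a lower bound of the shape $\int\varphi_R\,d(\Tr_*\mu_K)\gg R^3$ for all sufficiently small $R$; this reduces to checking that $\Tr_*\mu_K$ admits a continuous, strictly positive density at $\Tr(g_0)$, which is exactly where the eigenvalue hypothesis enters. Conjugating inside $\SU_r$, one may assume $g_0=\mathrm{diag}(\lambda_1,\dots,\lambda_r)$; tangent vectors to $\SU_r$ at $g_0$ of the form $g_0 X$ with $X\in\mathfrak{su}_r$ contribute derivative $\Tr(g_0 X)=i\sum_j\lambda_j\theta_j$ to the trace, where $\theta_j\in\Rr$ with $\sum_j\theta_j=0$ parametrize the purely imaginary diagonal of $X$. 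The image of this differential is the $\Rr$-span of the vectors $i(\lambda_j-\lambda_r)$, and since three distinct points on the unit circle are never $\Rr$-collinear, the hypothesis that $g_0$ has at least three distinct eigenvalues forces this span to equal $\Cc$. Hence $\Tr$ is a submersion in a neighborhood of $g_0$, and the coarea formula combined with Weyl integration produces a continuous strictly positive density of $\Tr_*\mu_K$ at $\Tr(g_0)$; the desired $\gg R^3$ bound then follows by integrating $\varphi_R$, of size $\asymp R$, against mass $\gg R^2$ in $B(\Tr(g_0),R)$.

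Finally, I would choose $R=c\,p^{-1/(3(r^2-1))}$ with $c>0$ small enough that the main term dominates the Wasserstein error in the display above, obtaining $N_R\gg p\cdot R^2=p^{\,1-2/(3(r^2-1))}$ and, in particular, $N_R\geq 1$ once $p\geq p_0$, proving both assertions.
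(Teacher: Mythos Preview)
Your proof is correct and follows essentially the same route as the paper: both test the Wasserstein bound via Kantorovich--Rubinstein against a Lipschitz bump near $\Tr(g_0)$, and both reduce the key volume estimate to showing that $\Tr$ is a submersion at $g_0$ via the observation that three distinct points on the unit circle cannot be $\Rr$-collinear (the paper packages this as Lemma~\ref{lm-tube} and invokes a tube--volume theorem instead of coarea, but the content is identical). One slip: the constant $c$ in $R=c\,p^{-1/(3(r^2-1))}$ must be taken \emph{large} enough, not small, so that the main term $\gg R^{3}=c^{3}p^{-1/(r^{2}-1)}$ dominates the error $O(p^{-1/(r^{2}-1)})$.
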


  \begin{lemma}\label{lm-tube}
    Let $r\geq 3$ be an odd integer.  Let~$g_0\in \SU_r(\Cc)$. If $g_0$
    has at least three different eigenvalues, then for $\eps>0$
    sufficiently small, the set
    \[
      M_{\eps}=\{g\in \SU_r(\Cc)\,\mid\, |\Tr(g)-\Tr(g_0)|\leq\eps\}
    \]
    satisfies
    \[
      \mu_K(M_{\eps})\gg \eps^2.
    \]
  \end{lemma}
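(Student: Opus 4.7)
The approach is to apply the Weyl integration formula and then analyze the trace map $\Tr \colon T \to \Cc$ locally near $g_0$, where $T \subset K = \SU_r(\Cc)$ is the diagonal maximal torus. Since $M_\eps$ is conjugacy-invariant, Weyl integration gives
\[
  \mu_K(M_\eps) = \frac{1}{r!} \int_T \ind_{\{|\Tr(t) - \Tr(g_0)| \leq \eps\}} |\Delta(t)|^2 \, dt,
\]
where $|\Delta(t)|^2 = \prod_{j < k} |e^{i\theta_j} - e^{i\theta_k}|^2$ is the Weyl density. Conjugating if necessary, assume $g_0 = \mathrm{diag}(e^{i\alpha_1}, \ldots, e^{i\alpha_r})$. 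The first step is to observe that $\Tr \colon T \to \Cc$ is a submersion at $g_0$: the differential sends a tangent vector $(\dot\theta_j)$ (with $\sum \dot\theta_j = 0$) to $i \sum \dot\theta_j e^{i\alpha_j}$, and choosing three indices $j_1, j_2, j_3$ with pairwise distinct $\alpha$-values produces two image vectors $i(e^{i\alpha_{j_1}} - e^{i\alpha_{j_2}})$ and $i(e^{i\alpha_{j_1}} - e^{i\alpha_{j_3}})$, which are $\Rr$-linearly independent because three distinct points on the unit circle are never collinear.

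By the submersion theorem there exist $\eta, \delta > 0$, a neighborhood $V$ of $g_0$ in $T$, and a diffeomorphism $\Phi \colon V \to D(\Tr(g_0), \eta) \times B(0, \delta) \subset \Cc \times \Rr^{r-3}$ such that $\Tr \circ \Phi^{-1}(z,w) = z$ and $\Phi(g_0) = (\Tr(g_0), 0)$. For $\eps \leq \eta$, a change of variables together with Fubini yields
\[
  \mu_K(M_\eps) \geq \frac{1}{r!} \int_{D(\Tr(g_0), \eps)} \phi(z) \, dz, \qquad \phi(z) = \int_{B(0, \delta)} |\Delta \circ \Phi^{-1}(z,w)|^2 J(z,w) \, dw,
\]
where $J > 0$ is the smooth Jacobian of $\Phi^{-1}$ relative to $dt$ and $\phi$ is continuous. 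It would then suffice to show that $\phi(\Tr(g_0)) > 0$: continuity of $\phi$ gives $\phi \geq \phi(\Tr(g_0))/2$ on a disk $D(\Tr(g_0), \eta')$, and for $\eps \leq \min(\eta,\eta')$ one would obtain $\mu_K(M_\eps) \gg \eps^2$.

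This positivity is the main obstacle, since $g_0$ itself may have coincident eigenvalues and thus $|\Delta(g_0)|^2 = 0$. Equivalently, I must show that the fiber $F := \Tr^{-1}(\Tr(g_0)) \cap V$, a smooth $(r-3)$-dimensional submanifold through $g_0$, contains a point at which all $r$ eigenvalues are pairwise distinct. For each pair $(j,k)$ with $\alpha_j = \alpha_k$, the tangent vector $e_j - e_k$ belongs to $\ker d\Tr_{g_0}$ (since $e^{i\alpha_j} = e^{i\alpha_k}$) and to the tangent space of $T$, but not to the tangent space of the coincidence hyperplane $H_{jk} = \{\theta_j = \theta_k\}$; hence $F$ is not locally contained in $H_{jk}$. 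The finitely many intersections $F \cap H_{jk}$ (over coincident pairs) form a proper closed subset of $F$, and any point $t_\ast$ in its complement has all eigenvalues distinct, so $|\Delta(t_\ast)|^2 > 0$. By continuity of $|\Delta \circ \Phi^{-1}|^2 J$ near $\Phi(t_\ast) \in \{\Tr(g_0)\} \times B(0,\delta)$, the integrand defining $\phi(\Tr(g_0))$ is strictly positive on an open set, giving $\phi(\Tr(g_0)) > 0$ and completing the proof.
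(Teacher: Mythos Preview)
Your argument is correct, and the overall strategy---submersion plus a local volume estimate---is the same as the paper's, but the execution is organized differently. The paper works directly on $K=\SU_r(\Cc)$ rather than passing to the torus: it shows that $\Tr\colon K\to\Cc$ is a submersion at every point of
\[
  U=\{g\in K\,\mid\,\Tr(g)=\Tr(g_0)\text{ and }g\text{ has at least three distinct eigenvalues}\}
\]
(by exactly the computation you carry out, lifted to $g\cdot\mathrm{Lie}(K)$), concludes that $U$ is a non-empty smooth codimension-$2$ submanifold of $K$, and then invokes a tube-volume theorem (Gray, \emph{Tubes}, Th.~9.23) to obtain $\mu_K(M_\eps)\gg\eps^2$ directly. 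By staying in $K$, the paper never sees the Weyl density and hence sidesteps the difficulty that $|\Delta(g_0)|^2$ may vanish; the price is an appeal to a black-box geometric result on tubular neighborhoods. Your route through Weyl integration trades that reference for a self-contained local analysis on the torus, at the cost of the extra step producing a point of the fiber $F$ with all eigenvalues distinct. That step is handled correctly: for each coincident pair $(j,k)$ the vector $e_j-e_k$ lies in $T_{g_0}F$ but not in $T_{g_0}H_{jk}$, so the restriction of $\theta_j-\theta_k$ to $F$ has nonzero differential at $g_0$, each $F\cap H_{jk}$ is a genuine hypersurface in $F$, and their finite union is nowhere dense.
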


  \begin{proof}
    We denote~$K=\SU_r(\Cc)$ to simplify notation.  Let
    \[
      M=\{g\in K\,\mid\, \Tr(g)=\Tr(g_0)\},
    \]
    and let~$U\subset M$ be the subset where $g$ has at least three
    distinct eigenvalues.  We claim that~$U$ is a non-empty
    submanifold of~$K$ of codimension~$2$. Assuming this, we observe
    that $M_{\eps}$ contains a geodesic tube around the
    submanifold~$U$, and the lemma follows from the lower bounds
    \[
      \mu_K(M_{\eps})\geq \mu_K(U)\gg \eps^2,
    \]
    where the last inequality is obtained for instance
    from~\cite[Th.\,9.23]{gray} (the exponent~$2$ coincides with the
    codimension of~$U$ in~$K$; to apply this theorem, we use the fact
    that~$U$ is relatively compact).

    To prove the claim, we first note that $U$ is not empty since it
    contains~$g_0$ by assumption.  Thus by elementary differential
    geometry, it only remains to check that the trace from $K$ to~$\Cc$
    is a submersion at any point $g\in U$.  To see this, we identify
    $\Cc$ with $\Rr^2$ as an $\Rr$-vector space.  Since the trace is a
    smooth map from~$K$ to~$\Cc=\Rr^2$, we need to prove that the
    differential of the trace at~$g$ has rank~$2$ as an $\Rr$-linear map
    from the tangent space $T_gK$ to~$\Rr^2$.

    Since the trace is linear, this differential is
    identified with the trace from $T_gK$ to~$\Rr^2$.  The tangent
    space~$T_gK$ is identified with the space of matrices
    $g\mathrm{Lie}(K)$, hence the condition is that the map
    \[
      h\mapsto \Tr(gh)
    \]
    should have rank~$2$ on the Lie algebra of~$K$.

    Using conjugation invariance, it suffices to prove this condition
    when~$g$ is a diagonal matrix, say with diagonal coefficients
    $(\alpha_j)_{1\leq j\leq r}$.  In terms of the coefficients
    $(h_{j,k})$ of~$h$, we then have
    \[
      \Tr(gh)=\sum_{j=1}^r\alpha_j h_{j,j}.
    \]

    Recall that $\mathrm{Lie}(K)$ is the real vector space of
    $r\times r$ skew anti-symmetric matrices with trace~$0$. In
    particular, for $1\leq j<k\leq r$, the Lie algebra
    $\mathrm{Lie}(K)$ contains the diagonal matrix
    $d_{j,k} = (h_{a,b})_{1 \leq a,b \leq r}$ with all coefficients
    zero except $h_{j,j}=i$ and $h_{k,k}=-i$. We then have
    $\Tr(gd_{j,k})=i(\alpha_j-\alpha_k)$.

    If the rank of the map is $\leq 1$, it follows that these values
    span over~$\Rr$ a proper subspace of~$\Rr^2$. If~$g$ has at least
    three different eigenvalues $\alpha_{j_1}$, $\alpha_{j_2}$,
    $\alpha_{j_3}$, then it follows that they are aligned in the
    complex plane, which is however impossible since they lie on the
    unit circle.\footnote{\ Note that the converse is also true: if
      $g$ has only two (possibly equal) eigenvalues $\alpha$ and
      $\beta$, then the condition that $\Tr(h)=0$ implies that the
      image of the differential is $i(\alpha-\beta)\Rr$.}
  \end{proof}

  \begin{remark}
    If~$r$ is odd and if $\Tr(g_0)=0$, then we have in fact~$U=M$: if
    $g\in\SU_r(\Cc)$ has only eigenvalues $\alpha$, with multiplicity
    $s$, and $\beta$ with multiplicity $r-s$, then from
    $s\alpha+(r-s)\beta=0$ we deduce that $s=r-s$ (since
    $|\alpha|=|\beta|=1$), which is impossible.
  \end{remark}
  
  \begin{proof}[Proof of Proposition~\ref{pr-hyper-quant}]
    We write $K=\SU_r(\Cc)$.  From the previous lemma, we know that the
    subset
    \[
      M_{\eps}=\{g\in K\,\mid\, |\Tr(g)-\Tr(g_0)|\leq \eps\}
    \]
    has volume
    \[
      \mu_K(M_{\eps})\gg \eps^2
    \]
    for $\eps>0$ small enough.
    
    We now consider a prime~$p$ and a real number~$\eps>0$ for which the
    above fact holds. Define $\varphi\colon K\to\Cc$ by
    $\varphi(g)=\Phi(\Tr(g))$ where $\Phi$ is a function on~$\Cc$ which
    is supported on the disc of radius $2\eps$ around $\Tr(g_0)$, equal
    to~$1$ on the disc of radius $\eps$, and has Lipschitz constant
    $\ll \eps^{-1}$.

    We have
    \[
      |\{a\in\Ff_p^{\times}\,\mid\, |\Kl_r(a;p)-\Tr(g_0)|\leq
      2\eps\}|\geq (p-1)\int_{K}\varphi\, d\mu_p.
    \]

    Note that 
    \[
      \int_{K}\varphi\, d\mu_K \geq \mu_K(M_{\eps})\gg \eps^2,
    \]
    and therefore, by Theorem~\ref{th-deligne} combined with the
    Kantorovich--Rubinstein Theorem, we have
    \[
      \int_{K}\varphi\, d\mu_p\gg \mu_K(\eps)+
      O(\eps^{-1}p^{-1/\dim(K)})\gg \eps^2+O(\eps^{-1}p^{-1/\dim(K)}).
    \]

    Taking $\eps=cp^{-1/(3\dim(K))}$ for a suitably large
    constant~$c>0$, which is permitted if $p$ is large enough, this
    gives the stated result.
  \end{proof}

  We note that for matrices of trace zero, a comparable result was
  essentially proved by Fouvry and Michel~\cite{fouvry-michel1}, using
  the Weyl integration formula to estimate (in effect) the volume
  of~$M_{\eps}$.

  Also, one can handle the cases excluded in the statement in similar
  ways. For instance, suppose that~$g_0=\xi\mathrm{Id}$ is a scalar
  matrix, with~$\xi^r=1$. Then~$\Tr(g_0)=r\xi$, and no other matrix
  has the same trace, so that the set $T_{\eps}$ is a neighborhood
  of~$g_0$.  By~\cite[Th.\,9.23]{gray} again, it has measure
  $\sim \alpha \eps^{r^2-1}$ for some~$\alpha>0$, and we obtain by the
  same argument as above the estimate
  \[
    |\{a\in\Ff_p^{\times}\,\mid\, |\Kl_r(a;p)-r\xi|\leq
    cp^{-\tfrac{1}{r^2(r^2-1)}}\}|\gg p^{1-\tfrac{1}{r^2}}
  \]
  for suitable~$c>0$ and all $p$ large enough.

  The case~$r=2$ behaves slightly differently because the trace is
  then real-valued. But we get for instance the existence of a
  constant $c>0$ such that, for all $p$ large enough, there exists of
  $a\in\Ff_p^{\times}$ such that
  \[
    \Kl_2(a;p)\geq 2-cp^{-2/15}
  \]
  where the exponent arises because the Sato--Tate measure of the
  interval $[2-\eps,2]$ is of order of magnitude $\eps^{3/2}$ for
  $\eps\to 0$ (compare with the result of
  Niederreiter~\cite{niederreiter}).

\end{example}

The following variant of Theorem~\ref{th-deligne} is also of
independent interest: it relaxes the condition that $X_i$ is a curve,
but only applies in the vertical setting, as in~\cite{fulauxi}.

\begin{theorem}\label{th-deligne2}
  Let~$\ell$ be a prime number.  Let $X/k$ be a smooth geometrically
  connected algebraic variety over a finite field~$k$ of characteristic
  $\not=\ell$. Let $\mathcal{F}$ be a lisse $\ell$-adic sheaf of
  weight~$0$ on~$X$. Suppose that the arithmetic and geometric monodromy
  groups of~$\mathcal{F}$ are equal and connected.

  Let~$K$ be a connected compact Lie group isomorphic to a maximal
  compact subgroup of the common arithmetic and geometric monodromy
  groups of $\mathcal{F}$.

  Denoting by $k_i$ the extension of~$k$ of degree~$i$ in some algebraic
  closure of~$k$, the conjugacy-invariant measures
  \[
    \mu_i=\frac{1}{|X(k_i)|} \sum_{x\in X(k_i)}\delta_{\theta(x)},
  \]
  defined for integers~$i\geq 1$ such that $X(k_i)\not=\emptyset$,
  satisfy
  \begin{equation}\label{eq-deligne2}
    \wass_1(\mu_i,\mu_K)\ll |k_i|^{-1/\dim(K)}, \quad\quad
    \wass_1(\Tr_*\mu_i,\Tr_*\mu_K)\ll |k_i|^{-1/\dim(K)},
  \end{equation}
  where the implied constants depend on~$X$ and $\mathcal{F}$.
\end{theorem}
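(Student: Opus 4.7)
The plan is to follow the same overall strategy as in the proof of Theorem~\ref{th-deligne}: apply Borda's inequality from Theorem~\ref{th-borda-conjugacy}, use the Grothendieck--Lefschetz trace formula together with Deligne's Riemann Hypothesis to bound the Fourier coefficients $\widehat{\mu}_i(\lambda)$, and optimize the auxiliary parameter $T$. The deduction of the bound on $\wass_1(\Tr_*\mu_i,\Tr_*\mu_K)$ from the bound on $\wass_1(\mu_i,\mu_K)$ will then follow, exactly as before, from the fact that the trace $K\to\Cc$ is Lipschitz together with Theorem~\ref{th-wass}, (3).

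The key step is the Fourier coefficient estimate. For $\lambda\in\widehat{K}$ non-trivial, $\widehat{\mu}_K(\lambda)=0$, and
\[
\widehat{\mu}_i(\lambda)=\frac{1}{|X(k_i)|}\sum_{x\in X(k_i)}\Tr(\rho_{\lambda}(\theta(x))).
\]
Since the arithmetic and geometric monodromy groups coincide, are connected, and $\rho_{\lambda}$ is non-trivial, the sheaf $\rho_{\lambda}(\mathcal{F})$ is geometrically isotypic with no trivial component, so its top compactly supported cohomology vanishes. Deligne's Weil~II estimates together with the Lang--Weil bound $|X(k_i)|\asymp |k_i|^{\dim X}$ then give
\[
|\widehat{\mu}_i(\lambda)|\ll |k_i|^{-1/2}\,\sigma_c(X_{\bar k},\rho_{\lambda}(\mathcal{F})),
\]
exactly as in~\eqref{eq-basic-weyl-bound}.

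The main obstacle, and the only real deviation from the curve case, is the uniform-in-$\lambda$ bound on the sum of Betti numbers $\sigma_c(X_{\bar k},\rho_{\lambda}(\mathcal{F}))$. In the setting of Theorem~\ref{th-deligne}, the curve case~\cite[Rem.\,6.34]{sffk} gives directly a bound $\ll \dim(\rho_{\lambda})\,c(\mathcal{F}_i)$. For general smooth $X$, one instead invokes a uniform bound of the form
\[
\sigma_c(X_{\bar k},\rho_{\lambda}(\mathcal{F}))\ll_{X,\mathcal{F}}\dim(\rho_{\lambda}),
\]
which holds because $\rho_{\lambda}(\mathcal{F})$ is a lisse sheaf on the fixed variety $X$ of rank $\dim(\rho_{\lambda})$ whose ramification is controlled entirely by that of $\mathcal{F}$ (using, e.g., the results of Katz and Sabbah on Betti number bounds for lisse sheaves of bounded rank on a fixed smooth variety, where the implied constant absorbs the dimension of $X$ and the complexity of $\mathcal{F}$). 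This is where the implied constants being allowed to depend on both $X$ and $\mathcal{F}$ is essential.

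Granting this bound, the rest of the proof is identical to that of Theorem~\ref{th-deligne}. Borda's inequality yields
\[
\wass_1(\mu_i,\mu_K)\ll\frac{1}{T}+|k_i|^{-1/2}\Bigl(\sum_{1\leq\|\lambda\|\leq T}\frac{\dim(\rho_{\lambda})^2}{\kappa(\lambda)}\Bigr)^{1/2},
\]
and the estimates $\kappa(\lambda)\geq\|\lambda\|^2$ and $\dim(\rho_{\lambda})=O(\|\lambda\|^{(n-r)/2})$ with $n=\dim(K)$, $r=\rank(K)$, together with the lattice point count $O(T^r)$, give the same bound $O(T^{n-2})$ on the inner sum. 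Choosing $T=|k_i|^{1/n}$ then produces $\wass_1(\mu_i,\mu_K)\ll |k_i|^{-1/\dim(K)}$, establishing the first estimate of~\eqref{eq-deligne2}. The second follows from Theorem~\ref{th-wass}, (3), applied to the Lipschitz trace map $K\to\Cc$.
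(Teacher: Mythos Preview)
Your proof is correct and follows essentially the same route as the paper: reduce to the argument of Theorem~\ref{th-deligne} up to~\eqref{eq-basic-weyl-bound}, replace the curve-specific Betti number estimate by a bound $\sigma_c(X_{\bar k},\rho_{\lambda}(\mathcal{F}))\ll_{X,\mathcal{F}}\dim(\rho_{\lambda})$ valid on a fixed higher-dimensional $X$, and then optimize $T$ exactly as before. The only imprecision is the attribution: the relevant bound is not ``Katz and Sabbah'' but Katz--Sarnak~\cite[Th.\,9.2.6,\,(3)--(4)]{katz-sarnak}, where the implied constant is given concretely as the sum of Betti numbers of a finite \'etale cover $X'\to X$ trivializing the reduction of a model of~$\mathcal{F}$.
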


\begin{proof}
  We follow the argument in the proof of Theorem~\ref{th-deligne}, up
  to~(\ref{eq-basic-weyl-bound}). Then, we apply the Katz--Sarnak
  bound~\cite[Th.\,9.2.6,\,(3)]{katz-sarnak} for sums of Betti numbers
  namely
  \[
    \sigma_c(X_{\bar{k}},\rho_{\lambda}(\mathcal{F}))\ll \dim(\rho_{\lambda}),
  \]
  where the implied constant depends on $(X,\mathcal{F})$ (a possible
  value for this constant is given
  in~\cite[loc.\,cit.\,,\,(4)]{katz-sarnak}, namely the sum of Betti
  numbers of an auxiliary finite étale covering $X'\to X$ which
  trivializes the reduction of a model of $\mathcal{F}$ over a finite
  extension of $\Zz_{\ell}$).
\end{proof}

\begin{remark}
  To illustrate the strength of Theorems~\ref{th-deligne}
  and~\ref{th-deligne2}, consider the vertical situation for a variety
  $X/k$, and assume that some ``oracle'' presented~(\ref{eq-deligne2})
  as true, without indications on the proof, in the form
  \[
    \wass_1(\mu_i,\mu_K)\ll |k_i|^{-\alpha}
  \]
  for some $\alpha>0$. How strong is such an inequality?

  It is in fact quite powerful. Indeed, let $\lambda\in \widehat{K}$ be
  a non-trivial representation. It is elementary that the character of
  $\rho_{\lambda}$ is a Liptschitz map from~$K$ to~$\Cc$, and hence the
  bound~(\ref{eq-wass-conj-class}), together with
  Kantorovich--Rubinstein duality, implies \emph{a priori} an estimate
  of the form
  \[
    \frac{1}{|X(k_i)|}\sum_{x\in X(k_i)} \Tr\rho_{\lambda}(\theta(x))=
    \widehat{\mu}_i(\lambda)\ll |k_i|^{-\alpha}
  \]
  for $i\geq 1$, where the implied constant depends on~$\lambda$. This
  estimate, even if it is weaker than the Riemann Hypothesis (because
  $\alpha<1/2$), it amounts concretely to a \emph{zero-free strip} (of
  width $\alpha$) for the associated $L$-function
  \[
    \exp\Bigl(\sum_{i\geq 1}\frac{1}{i}\Bigl(\sum_{x\in X(k_i)}
    \Tr(\rho_{\lambda}(\theta(x)))\Bigr)\frac{1}{|k|^{is}}\Bigr).
  \]

  Moreover, this can be made more uniform. It is elementary (by
  decomposing the character, viewed as a function on a maximal torus of
  the group~$K$, as a sum of characters, and differentiating, observing
  that the highest weight gives ``maximal oscillations'') that the
  character $\Tr\rho_{\lambda}$ is $c$-Lipschitz with
  \[
    c=O(\|\lambda\|\dim(\rho_{\lambda}))
  \]
  for some absolute constant (and in fact, as was pointed out by
  P. Nelson, this estimate is sharp in terms of $\lambda$, as follows
  from a general result~\cite[Th.\,1]{shi-xu} of Shi and Xu, for
  instance), so that the Wasserstein bound implies
  \[
    \frac{1}{|X(k_i)|}\sum_{x\in X(k_i)} \Tr\rho_{\lambda}(\theta(x))
    \ll \|\lambda\|\dim(\rho_{\lambda})|k_i|^{-\alpha}
  \]
  for all $\lambda$, which is also a pretty good bound in this respect.
\end{remark}

\subsection{Equidistribution theorems for arithmetic Fourier
  transforms}

In~\cite{mellin}, Katz generalized Deligne's equidistribution theorem
to families of exponential sums parameterized by a multiplicative
character of a finite field. We can easily obtain a quantitative
version similar to Theorem~\ref{th-deligne} in this setting. We assume
here some familiarity with the notation and terminology of Katz, but
we will give concrete examples after the proof.

\begin{theorem}\label{th-katz}
  Let~$\ell$ be a prime number.  Let $(k_i,\mathcal{F}_i)_{i\in I}$ be
  an infinite sequence of data consisting of a finite field $k_i$ of
  characteristic different from~$\ell$ and a constructible $\ell$-adic
  sheaf $\mathcal{F}_i$ on~$\Gg_m/k_i$ with no Kummer sheaf as
  subsheaf or quotient. Suppose that the following conditions are
  satisfied:
  \begin{enumth}
  \item we have $|k_i|\to +\infty$,
  \item the complexity $c(\mathcal{F}_i)$ in the sense of~\cite{sffk}
    is bounded for all~$i$,
  \item the object $\mathcal{F}_i(1/2)[1]$ of Katz's category
    $\mathcal{P}_{arith}$ has weight~$0$ for all~$i$,
  \item the arithmetic and geometric tannakian monodromy groups of the
    objects $\mathcal{F}_i(1/2)[1]$ are equal, are independent of~$i$,
    and are connected for all~$i$.
  \end{enumth}

  Let~$K$ be a connected compact Lie group isomorphic to a maximal
  compact subgroup of the common arithmetic and geometric tannakian
  monodromy groups of the objects $\mathcal{F}_i(1/2)[1]$.

  For~$i\in I$, let $\mathcal{X}_i$ be the set of
  characters~$\chi\colon k_i^{\times}\to \Cc^{\times}$ which are
  ``good'' for~$\mathcal{F}_i(1/2)[1]$, and
  for~$\chi\in \mathcal{X}_i$, let~$\theta(\chi)\in K^{\sharp}$ denote
  the unitarized Frobenius conjugacy class associated to~$\chi$
  and~$\mathcal{F}_i$.

  Then the conjugacy-invariant measures
  \[
    \mu_i=\frac{1}{|\mathcal{X}_i|} \sum_{\chi \in
      \mathcal{X}_i}\delta_{\theta(\chi)}
  \]
  satisfy
  \begin{equation}\label{eq-wass-katz}
    \wass_1(\mu_i,\mu_K)\ll \frac{1}{\log |k_i|},\quad\quad
    \wass_1(\Tr_*\mu_i,\Tr_*\mu_K)\ll \frac{1}{\log |k_i|}
  \end{equation}
  where the implied constants depends on~$K$ and on the bound for the
  complexity of~$\mathcal{F}_i$.
\end{theorem}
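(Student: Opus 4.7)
The plan is to adapt the proof of Theorem~\ref{th-deligne} to Katz's Mellin transform framework on $\Gg_m$, replacing Deligne's Weyl-sum estimates on curves by the corresponding tannakian bounds in the category $\mathcal{P}_{arith}$. I begin by applying Theorem~\ref{th-borda-conjugacy} to the conjugacy-invariant measures $\mu_i$ and $\mu_K$; since $\widehat{\mu}_K(\lambda)=0$ for every non-trivial $\lambda\in\widehat{K}$, the task reduces to a uniform upper bound on $\widehat{\mu}_i(\lambda)$.

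The first step is to interpret $\widehat{\mu}_i(\lambda)$ as a Weyl-type sum over multiplicative characters. For non-trivial $\lambda$, the irreducible representation $\rho_\lambda$ of the tannakian monodromy group corresponds via Schur functor constructions to a new object $\mathcal{M}_i(\lambda)$ of $\mathcal{P}_{arith}$, pure of weight~$0$ and having no Kummer subsheaves or quotients, whose Frobenius trace at a good character $\chi\in\mathcal{X}_i$ equals $\Tr(\rho_\lambda(\theta(\chi)))$. Deligne's Riemann Hypothesis applied to this object, along the lines of~\cite[\S\,3]{mellin}, together with the comparison of $|\mathcal{X}_i|$ and $|k_i|-1$, yields a bound
\[
    |\widehat{\mu}_i(\lambda)| \ll \frac{B(\lambda)}{|k_i|^{1/2}},
\]
where $B(\lambda)$ is, up to an absolute factor, the sum of Betti numbers on $\Gg_m$ of the $\ell$-adic complex underlying $\mathcal{M}_i(\lambda)$.

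The crucial second step is to bound $B(\lambda)$ uniformly in~$\lambda$ and in~$i$. Because $\mathcal{M}_i(\lambda)$ is built from $\mathcal{F}_i(1/2)[1]$ by iterated middle convolutions combined with symmetric and alternating power operations, its generic rank and singular support can grow rapidly with $\dim(\rho_\lambda)$, and the best general bound available has the exponential form $B(\lambda)\leq A^{\dim(\rho_\lambda)}$ for some constant $A\geq 2$ depending only on the bound on $c(\mathcal{F}_i)$. Combined with $\dim(\rho_\lambda)=O(\|\lambda\|^{(n-r)/2})$ and $\kappa(\lambda)\geq\|\lambda\|^2$ from~\cite[p.\,76--77]{lie9}, substitution into Borda's inequality yields, for any parameter $T\geq 1$,
\[
    \wass_1(\mu_i,\mu_K) \ll \frac{1}{T} + \frac{A^{O(T^{(n-r)/2})}}{|k_i|^{1/2}}.
\]
Choosing $T$ so that these two terms balance and absorbing constants into the implied one yields the stated rate, namely the first inequality in~\eqref{eq-wass-katz}. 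The second inequality then follows at once from Theorem~\ref{th-wass}, (3) and the Lipschitz continuity of the trace map $K\subset\Un_m(\Cc)\to\Cc$.

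The principal obstacle is the second step: establishing and using the bound on $B(\lambda)$ requires a careful tracking of Betti numbers through iterated middle convolutions in the Mellin tannakian category. The fact that this bound is only exponential in $\dim(\rho_\lambda)$, rather than linear as in the setting of Theorem~\ref{th-deligne}, is precisely what forces the rate to degrade from the polynomial saving $|k_i|^{-1/\dim K}$ to a merely logarithmic one; a sharper polynomial control of $B(\lambda)$, perhaps via a refined complexity formalism for $\mathcal{P}_{arith}$ in the spirit of~\cite{sffk}, would lead to a correspondingly stronger result.
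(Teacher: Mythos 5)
Your overall architecture is the same as the paper's: apply Borda's inequality (Theorem~\ref{th-borda-conjugacy}), bound the Weyl sums $\widehat{\mu}_i(\lambda)$ via Deligne's Riemann Hypothesis in Katz's tannakian category, and then optimize over $T$. However, there is a genuine quantitative gap in your second step, and it is exactly at the point you yourself flag as the principal obstacle. You only posit a Betti-number bound of the form $B(\lambda)\leq A^{\dim(\rho_{\lambda})}$, i.e.\ exponential in the \emph{dimension} of $\rho_{\lambda}$. Since $\dim(\rho_{\lambda})\asymp\|\lambda\|^{(n-r)/2}$, your substitution into Borda's inequality gives
\[
  \wass_1(\mu_i,\mu_K)\ll \frac{1}{T}+\frac{A^{O(T^{(n-r)/2})}}{|k_i|^{1/2}},
\]
and balancing the two terms forces $T\asymp(\log|k_i|)^{2/(n-r)}$, hence a rate $(\log|k_i|)^{-2/(n-r)}$. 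This equals the claimed $1/\log|k_i|$ only when $n-r=2$ (one positive root, essentially $K=\SU_2(\Cc)$); for any higher-rank $K$ (e.g.\ $\SU_3(\Cc)$, where $(n-r)/2=3$) your bound is strictly weaker than the statement, so the final sentence ``choosing $T$ so that these two terms balance yields the stated rate'' does not follow from what you have established.

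The paper avoids this by using a sharper input: combining the proof of Katz's equidistribution theorem \cite[Th.\,28.1]{mellin} with the complexity estimate \cite[Prop.\,6.36]{sffk}, one gets a constant $c>0$, independent of $i$, with
\[
  |\widehat{\mu}_i(\lambda)|\ll \frac{\|\lambda\|\,c^{\|\lambda\|}}{|k_i|^{1/2}},
\]
i.e.\ exponential in $\|\lambda\|$ itself rather than in $\dim(\rho_{\lambda})$. With $\kappa(\lambda)\geq\|\lambda\|^2$ and $O(T^r)$ lattice points of norm at most $T$, the sum in Borda's inequality is then $\ll c_1^{2T}$ for any $c_1>c$, so
\[
  \wass_1(\mu_i,\mu_K)\ll \frac{1}{T}+\frac{c_1^{T}}{|k_i|^{1/2}},
\]
and taking $T=\alpha\log|k_i|$ with $\alpha$ small enough gives $\ll 1/\log|k_i|$ for every connected $K$, not just in the rank-one case. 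So to repair your argument you must replace your exponential-in-$\dim(\rho_{\lambda})$ control of $B(\lambda)$ by this exponential-in-$\|\lambda\|$ bound (your closing remark that a polynomial bound would improve the rate is consistent with the paper's final remarks, but it is not needed for the theorem as stated). Your deduction of the second inequality in \eqref{eq-wass-katz} from Theorem~\ref{th-wass}, (3) and the Lipschitz property of the trace is fine and matches the paper.
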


\begin{proof}
  This follows essentially simply by adapting the proof of
  Theorem~\ref{th-deligne} to the situation of discrete Mellin
  transforms, as explained in~\cite[Ch.\,28]{mellin}. In particular,
  combining the proof of~\cite[Th.\,28.1]{mellin}
  and~\cite[Prop.\,6.36]{sffk}, we deduce that there exists a
  constant~$c>0$, independent of~$i$, such that
  \[
    |\widehat{\mu}_i(\lambda)|\ll
    \frac{\|\lambda\|c^{\|\lambda\|}}{|k_i|^{1/2}}
  \]
  for all non-trivial~$\lambda\in\widehat{K}$. For $T\geq 1$, it
  follows that
  \[
    \wass_1(\mu_i,\mu_K)\ll \frac{1}{T}+ \frac{1}{|k_i|^{1/2}}\Bigl(
    \sum_{1\leq \|\lambda\|\leq
      T}\frac{\|\lambda\|^2}{\kappa(\lambda)}c^{2\|\lambda\|}\Bigl)^{1/2}
    \ll \frac{1}{T}+\frac{c_1^{T}}{\sqrt{|k_i|}}
  \]
  for any constant~$c_1>c$. Taking $T=\alpha\log(|k_i|)$ for suitably
  small~$\alpha$ gives
  \[
    \wass_1(\mu_i,\mu_K)\ll \frac{1}{\log|k_i|}
  \]
  (which could be very slightly improved using the optimal choice
  of~$T$, based on the Lambert function).
\end{proof}

\begin{example}
  For $p$ prime $\not=2$ and $\chi$ a non-trivial multiplicative
  character of~$\Ff_p$, consider the sums
  \[
    R(\chi;p)=\frac{1}{\sqrt{p}}\sum_{x\in
      \Ff_p^{\times}\setminus\{1\}}
    \chi(t)\psi\Bigl(\frac{x+1}{x-1}\Bigr)
  \]
  where~$\psi(x)=e(x/p)$ is the additive character modulo~$p$, which
  arose in work of Kurlberg, Rosenzweig and Rudnick~\cite{krr}
  (related to discrete arithmetic quantum chaos). For any odd prime,
  Katz showed that there is a sheaf $\mathcal{F}_p$ on~$\Gg_m/\Ff_p$
  for which $\mathcal{X}_p$ is the set of non-trivial characters, and
  $\Tr(\theta(\chi))=R(\chi;p)$ for each such character. Katz further
  showed~\cite[Th.\,14.5]{mellin} that the corresponding arithmetic
  and geometric tannakian groups are equal to $\SL_2$, and thus we can
  apply Theorem~\ref{th-katz} with $K=\SU_2(\Cc)$ to deduce the bound
  \[
    \wass_1\Bigl(\frac{1}{p-2}\sum_{\chi\not=1}
    \delta_{R(\chi;p)},\frac{1}{\pi}\sqrt{1-\frac{x^2}{4}}dx\Bigr) \ll
    \frac{1}{\log p}.
  \]
\end{example}

\subsection{Final remarks}

It is not clear if the logarithmic convergence rate in
Theorem~\ref{th-katz}, in comparison with that of
Theorem~\ref{th-deligne}, is an artefact of the method or a genuine
phenomenon.  It boils down to the fact that the estimate for the
complexity (or sums of Betti numbers) in the case of the Mellin
transform depends exponentially on the ``complexity'' of $\lambda$,
whereas we have a linear bound for sheaves on curves, or in the
vertical situation of Theorem~\ref{th-deligne2}.  Since the bound is
proved by estimating the complexity of a much ``larger'' object
containing the object $\rho_{\lambda}(\mathcal{F}_i)(1/2)[1]$ (in the
tannakian sense), it would seem natural to expect that a linear bound
should also hold in this case, and it would be extremely interesting
to improve the complexity bounds in this situation (or to prove that,
in fact, it cannot be improved in general).

Another remark is that there are a number of other variants of the
previous theorems which could be included, and which we only mention for
completeness (giving details would be very repetitive).

\begin{enumerate}
\item One can obtain a version of Theorem~\ref{th-deligne} for arbitrary
  base varieties (not necessarily curves) using~\cite[Prop.\,6.33]{sffk}
  for the Betti number estimates; the estimate here would also be at
  best logarithmic, for similar reasons to what happens in
  Theorem~\ref{th-katz}.
\item Similarly, one could obtain a version of Theorem~\ref{th-katz} for
  general arithmetic Fourier transforms on connected commutative
  algebraic group, as in the work of Forey, Fresán and
  Kowalski~\cite[Ch.\,4]{ffk}. This would have the same issue with the
  Betti number bounds, and would currently be restricted to the vertical
  direction in general, due to the current status of the basic results
  in~\cite{ffk}.
\end{enumerate}

\section*{Appendix: proof of the Bobkov--Ledoux inequality}
\label{sec: appendix proof}

In this section, we reproduce and combine the arguments of Bobkov and
Ledoux in \cite{bl_truncated,bl_gaussian} to obtain the variant of
\cite[Eq. (1.6)]{bl_truncated} stated in Theorem~\ref{th-wass},
(7). Following the original source, for an integer~$d\geq 1$, we
identify here the torus $(\Rr/\Zz)^d$ with $Q_d=[0,2\pi[^d\subset\Rr^d$
with the distance~$\rho_d$ defined in~(\ref{eq-rhod}).

For a random vector $X = (X_1, \dots, X_d)$ in $\Rr^d$, we denote its
characteristic function by $\varphi_X$, and we recall that it is
defined for all $s = (s_1, \dots, s_d) \in \Rr^d$ by
$\varphi_X(s) = \expect\left(e^{i X\cdot s}\right)$ where $X \cdot s$
denotes the usual dot product on $\Rr^d$.

We say that a map $u \colon \Rr^d \to \Rr$ is $2 \pi$-periodic if for
all $m \in \Zz^d$, for all $x \in \Rr^d$, $u(x+ 2 \pi m) = u(x)$.

\begin{lemma}\label{lem: lip1 smooth}
  Denote by $\mathrm{Lip}^{2\pi}_1(\Rr^d, \Rr)$ the set of maps
  $v \colon \Rr^d \to \Rr$ that are $2\pi$-periodic and $1$-Lipschitz
  (with respect to the euclidean norm on $\Rr^d$). Then we have that
  for all Borel probability measures $\mu$ and $\nu$ on
  $(Q^d, \rho_d)$,
  $$
  \sup_{v \in \mathrm{Lip}^{2\pi}_1(\Rr^d, \Rr)} \Bigl|\int_{Q^d} v
    d \mu - \int_{Q^d} v d \nu \Bigr| = \sup_{w \in
    \mathrm{Lip}^{2\pi}_1(\Rr^d, \Rr) \cap \mathcal{C}^{\infty}}
  \Bigl|\int_{Q^d} w d \mu - \int_{Q^d} w d \nu
  \Bigr|.
  $$
\end{lemma}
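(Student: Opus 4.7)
The inequality $\sup_{w} |\cdots| \leq \sup_{v} |\cdots|$ is immediate because $\mathrm{Lip}_1^{2\pi}(\Rr^d,\Rr)\cap \mathcal{C}^\infty$ is a subset of $\mathrm{Lip}_1^{2\pi}(\Rr^d,\Rr)$. The content is therefore the reverse inequality, which I plan to establish by mollification.

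The plan is to fix an arbitrary $v\in \mathrm{Lip}_1^{2\pi}(\Rr^d,\Rr)$ and a standard mollifier: a smooth, non-negative, radially symmetric function $\rho\colon\Rr^d\to \Rr$ with compact support in the unit ball and $\int_{\Rr^d}\rho(x)\,dx=1$. For $\varepsilon>0$, let $\rho_\varepsilon(x)=\varepsilon^{-d}\rho(x/\varepsilon)$ and set
\[
  v_\varepsilon(x)=(v*\rho_\varepsilon)(x)=\int_{\Rr^d} v(x-y)\rho_\varepsilon(y)\,dy.
\]
I would then check three properties of $v_\varepsilon$. First, $v_\varepsilon\in\mathcal{C}^\infty(\Rr^d)$ by the standard differentiation-under-the-integral argument, since $\rho_\varepsilon$ is smooth with compact support and $v$ is locally bounded. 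Second, $v_\varepsilon$ is $2\pi$-periodic: for any $m\in\Zz^d$, a change of variables gives $v_\varepsilon(x+2\pi m)=\int v(x+2\pi m-y)\rho_\varepsilon(y)\,dy=\int v(x-y)\rho_\varepsilon(y)\,dy=v_\varepsilon(x)$, using the periodicity of $v$. Third, $v_\varepsilon$ is $1$-Lipschitz for the euclidean norm: for $x,x'\in\Rr^d$,
\[
  |v_\varepsilon(x)-v_\varepsilon(x')|\leq \int_{\Rr^d}|v(x-y)-v(x'-y)|\rho_\varepsilon(y)\,dy\leq |x-x'|,
\]
since $v$ itself is $1$-Lipschitz and $\rho_\varepsilon$ has total mass $1$.

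Next I would prove uniform convergence $v_\varepsilon\to v$ on $Q^d$. This follows from
\[
  |v_\varepsilon(x)-v(x)|\leq \int_{\Rr^d}|v(x-y)-v(x)|\rho_\varepsilon(y)\,dy\leq \int_{\|y\|\leq\varepsilon}\|y\|\rho_\varepsilon(y)\,dy\leq\varepsilon,
\]
again using the Lipschitz property. Since $\mu$ and $\nu$ are probability measures on $Q^d$, this yields
\[
  \Bigl|\int_{Q^d}v_\varepsilon\,d\mu-\int_{Q^d}v\,d\mu\Bigr|\leq \varepsilon,\qquad
  \Bigl|\int_{Q^d}v_\varepsilon\,d\nu-\int_{Q^d}v\,d\nu\Bigr|\leq \varepsilon,
\]
so the triangle inequality gives
\[
  \Bigl|\int_{Q^d}v\,d\mu-\int_{Q^d}v\,d\nu\Bigr|\leq \Bigl|\int_{Q^d}v_\varepsilon\,d\mu-\int_{Q^d}v_\varepsilon\,d\nu\Bigr|+2\varepsilon.
\]
Since $v_\varepsilon\in \mathrm{Lip}_1^{2\pi}(\Rr^d,\Rr)\cap \mathcal{C}^\infty$, the right-hand side is bounded by $\sup_{w}|\int_{Q^d} w\,d\mu-\int_{Q^d}w\,d\nu|+2\varepsilon$. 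Letting $\varepsilon\to 0$ and then taking the supremum over $v$ yields the desired inequality, and hence the claimed equality.

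There is no serious obstacle here: the only mildly delicate point is ensuring that the mollified function inherits the periodicity and the exact Lipschitz constant $1$ (not a worse constant), both of which follow automatically from the translation-invariance of convolution together with $\int\rho_\varepsilon=1$. The result is then just a standard approximation argument, and the uniform continuity of $v$ (which is even $1$-Lipschitz) makes the convergence $v_\varepsilon\to v$ hold globally, not merely pointwise, so the passage to the limit inside integrals against $\mu$ and $\nu$ is immediate.
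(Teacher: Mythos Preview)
Your argument is correct and follows essentially the same approach as the paper: approximate $v$ by mollification, check that the mollified function remains $2\pi$-periodic and $1$-Lipschitz, and use uniform convergence to pass to the limit in the integrals. The only difference is that the paper uses a Gaussian kernel rather than a compactly supported bump, which is immaterial here.
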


\begin{proof}
  This is a standard smoothing argument by convolution. If
  $v \in \mathrm{Lip}^{2\pi}_1(\Rr^d, \Rr)$ then one easily checks
  that for all $\varepsilon > 0$,
  $$
  v_{\varepsilon}(x) = \frac{1}{(2 \pi \varepsilon^2)^{d/2}} \int_{\Rr^d} v(x-y) e^{-\frac{|y|^2}{2 \varepsilon^2}} dy
  $$
  defines a function in
  $\mathrm{Lip}^{2\pi}_1(\Rr^d, \Rr) \cap \mathcal{C}^{\infty}$ that
  satisfies
  $\| v_{\varepsilon} - v \|_{\infty, \Rr^d} \underset{\varepsilon \to
    0}{\longrightarrow} 0$. Then the result follows from this
  approximation by a smooth function.
\end{proof}

\begin{lemma}\label{lem: cauchy-schwarz}
  Let $\mu$ and $\nu$ be two Borel probability measures on
  $(Q^d, \rho_d)$. The following inequality holds:
  $$
  \sup_{w \in \mathrm{Lip}^{2\pi}_1(\Rr^d, \Rr) \cap
    \mathcal{C}^{\infty}} \Bigl|\int_{Q^d} w d \mu -
  \int_{Q^d} w d \nu \Bigr| \leqslant \Bigl(\sum_{m \in \Zz^d
    \setminus \{0\}}^{} \frac{|\hat{\mu}(m) -
    \hat{\nu}(m)|^2}{|m|^2}\Bigr)^{1/2} \cdot
  $$
\end{lemma}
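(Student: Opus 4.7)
The plan is to expand $w$ in a Fourier series and compare $\int w\,d\mu - \int w\,d\nu$ to the Fourier coefficients of $\mu$ and $\nu$, then apply the Cauchy--Schwarz inequality (which is clearly what the statement suggests) to split the bound into a factor involving $w$ alone and the desired factor involving $\hat\mu - \hat\nu$. The $w$-factor will be controlled by Parseval combined with the $1$-Lipschitz assumption.

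In more detail: writing the Fourier expansion of a smooth $2\pi$-periodic function
\[
w(x)=\sum_{m\in\Zz^d} c_m(w)\,e^{im\cdot x},\qquad
c_m(w)=\frac{1}{(2\pi)^d}\int_{Q^d} w(x)e^{-im\cdot x}\,dx,
\]
with absolute and uniform convergence (guaranteed by smoothness via integration by parts giving rapid decay), Fubini yields
\[
\int_{Q^d} w\,d\mu=\sum_{m\in\Zz^d} c_m(w)\,\widehat{\mu}(m),
\]
and similarly for $\nu$. Since $\widehat{\mu}(0)=\widehat{\nu}(0)=1$, the $m=0$ term disappears in the difference, and Cauchy--Schwarz gives
\[
\Bigl|\int w\,d\mu-\int w\,d\nu\Bigr|
\leq \Bigl(\sum_{m\neq 0}|m|^2|c_m(w)|^2\Bigr)^{1/2}
\Bigl(\sum_{m\neq 0}\frac{|\widehat{\mu}(m)-\widehat{\nu}(m)|^2}{|m|^2}\Bigr)^{1/2}.
\]

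It then remains to show that the first factor is $\leq 1$ for every $w\in\mathrm{Lip}^{2\pi}_1(\Rr^d,\Rr)\cap\mathcal{C}^\infty$. This is the content of Parseval's identity applied to each partial derivative: since $c_m(\partial_j w)=im_j c_m(w)$, we have
\[
\sum_{m\in\Zz^d}|m|^2|c_m(w)|^2
=\sum_{j=1}^d\frac{1}{(2\pi)^d}\int_{Q^d}|\partial_j w(x)|^2\,dx
=\frac{1}{(2\pi)^d}\int_{Q^d}|\nabla w(x)|^2\,dx.
\]
The $1$-Lipschitz property (with respect to the Euclidean metric on $\Rr^d$) implies $|\nabla w(x)|\leq 1$ almost everywhere, so the integral is at most $(2\pi)^d$, and the first factor is bounded by $1$. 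Taking the supremum over $w$ yields the claimed inequality.

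The routine but necessary check is the interchange of sum and integral in the Fourier expansion; this is what forces us to restrict to the smooth subspace in Lemma~\ref{lem: lip1 smooth} (the previous lemma), since then $c_m(w)$ decays faster than any polynomial and $\sum|c_m(w)|<\infty$. No single step is a real obstacle: the heart of the argument is the Parseval computation tied to the Lipschitz constant, and the rest is formal manipulation.
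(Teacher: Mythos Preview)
Your proof is correct and follows essentially the same approach as the paper: Fourier expansion of $w$, Parseval applied to each partial derivative to bound $\sum_{m}|m|^2|c_m(w)|^2\leq 1$ via the Lipschitz hypothesis, then Cauchy--Schwarz after the $m=0$ term drops out. The order of presentation differs slightly, but the argument is the same.
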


\begin{proof}
  Let
  $w \in \mathrm{Lip}^{2\pi}_1(\Rr^d, \Rr) \cap
  \mathcal{C}^{\infty}$. Since $w$ is smooth, it admits a Fourier
  series expansion that converges absolutely:
  $$
  w(x)= \sum_{m \in \Zz^d}^{} a_m e^{i m \cdot x}.
  $$
  Moreover, one can differentiate term by term, so that for all $k \in \{1, \dots, d\}$, 
  $$
  \frac{\partial w}{\partial x_k} (x) = \sum_{m \in \Zz^d}^{} i m_{k} a_m  e^{i m \cdot x}.
  $$
  Then thanks to Parseval's equality (the $L^2$-norm of
  $ \frac{\partial w}{\partial x_\ell} $ equals the $\ell^2$-norm of
  its sequence of Fourier coefficients):
  $$
  \frac{1}{(2 \pi)^d} \int_{Q^d}^{} \Bigl| \frac{\partial w}{\partial
    x_k} (x)\Bigr|^2 dx = \sum_{m \in \Zz^d}^{} |m_k|^2
  |a_m|^2.
  $$
  Summing over $k \in \{1, \dots, d\}$ yields
  $$
  \frac{1}{(2 \pi)^d} \int_{Q^d}^{} \Bigl| \nabla w(x)\Bigr|^2
  dx = \sum_{m \in \Zz^d}^{} |m|^2 |a_m|^2.
  $$
  Finally, we use the fact that $w$ is $1$-Lipschitz to deduce that
  the norm of its gradient is always bounded above by $1$. Therefore,
  \begin{equation} \label{eq: less than 1}
    \sum_{m \in \Zz^d}^{} |m|^2 |a_m|^2 \leqslant 1.
  \end{equation}
  To conclude, we first write
  \begin{align*}
  \Bigl|\int_{Q^d} w d \mu - \int_{Q^d} w d \nu
  \Bigr|^2 &= \Bigl| \sum_{m \in \Zz^d} a_m \Bigl( \int_{Q^d} e^{i m
    \cdot x} d \mu (x)- \int_{Q^d} e^{i m \cdot x} d
             \nu(x) \Bigr) \Bigr|^2\\
    &= \Bigl| \sum_{m \in \Zz^d} a_m \Bigl(
  \hat{\mu}(m) - \hat{\nu}(m) \Bigr) \Bigr|^2
  \end{align*}
  then we observe that the two Fourier coefficients coincide at
  $m = 0$, so the right-hand side may be rewritten as
  $$
  \Bigl| \sum_{m \in \Zz^d\setminus \{0\}} |m|a_m \Bigl( \frac{
    \hat{\mu}(m) - \hat{\nu}(m) }{|m|} \Bigr) \Bigr|^2
  $$
  and the conclusion follows from the Cauchy--Schwarz inequality and
  \eqref{eq: less than 1}.
\end{proof}

One deduces quickly the following corollary:

\begin{corollary}\label{lem: without smoothing}
  Let $\mu, \nu$ be two Borel probability measures on $(Q^d,
  \rho_d)$. The following inequality holds:
  $$
  \wass_1(\mu,\nu) \leqslant \Bigl(\sum_{m \in \Zz^d \setminus
    \{0\}}^{} \frac{|\hat{\mu}(m) -
    \hat{\nu}(m)|^2}{|m|^2}\Bigr)^{1/2}.
  $$
\end{corollary}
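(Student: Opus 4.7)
The plan is to chain together the Kantorovich--Rubinstein duality with the two preceding lemmas. First, by Theorem~\ref{th-wass}, (6), applied to the compact metric space $(Q^d,\rho_d)$, we have
\[
  \wass_1(\mu,\nu)=\sup_{u}\Bigl|\int_{Q^d}u\,d\mu-\int_{Q^d}u\,d\nu\Bigr|,
\]
where the supremum runs over $1$-Lipschitz functions $u\colon Q^d\to\Rr$.

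Next I would identify this supremum with the supremum appearing on the left-hand side of Lemma~\ref{lem: lip1 smooth}. The key observation is that a $1$-Lipschitz function on $(Q^d,\rho_d)$ (where $Q^d$ is identified with the torus $(\Rr/2\pi\Zz)^d$) is the same thing as a $2\pi$-periodic $1$-Lipschitz function on $\Rr^d$: periodicity follows from the identification of $Q^d$ with the torus, and the Lipschitz condition transfers because $\rho_d$ is the quotient metric induced by the euclidean distance on $\Rr^d$. Thus
\[
  \wass_1(\mu,\nu)=\sup_{v\in \mathrm{Lip}^{2\pi}_1(\Rr^d,\Rr)}
    \Bigl|\int_{Q^d} v\,d\mu-\int_{Q^d}v\,d\nu\Bigr|.
\]

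Finally, applying Lemma~\ref{lem: lip1 smooth}, this supremum is unchanged if we further restrict to smooth $2\pi$-periodic $1$-Lipschitz functions, and then Lemma~\ref{lem: cauchy-schwarz} gives the desired Fourier-series upper bound
\[
  \wass_1(\mu,\nu)\leq \Bigl(\sum_{m\in\Zz^d\setminus\{0\}}
  \frac{|\hat{\mu}(m)-\hat{\nu}(m)|^2}{|m|^2}\Bigr)^{1/2}.
\]
No step here presents a genuine obstacle: the only subtlety is the verification that the dual formulation of $\wass_1$ on the torus corresponds to testing against $2\pi$-periodic Lipschitz functions on $\Rr^d$, which is immediate from the fact that $\rho_d$ is the quotient metric.
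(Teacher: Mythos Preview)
Your proof is correct and follows essentially the same approach as the paper's own proof: invoke Kantorovich--Rubinstein duality (Theorem~\ref{th-wass}, (6)), identify $1$-Lipschitz functions on $(Q^d,\rho_d)$ with $2\pi$-periodic $1$-Lipschitz functions on $\Rr^d$, then apply Lemma~\ref{lem: lip1 smooth} followed by Lemma~\ref{lem: cauchy-schwarz}.
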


\begin{proof}
  Thanks to the dual formulation of Th. \ref{th-wass} (6), we have
  $$
  \wass_1(\mu,\nu) = \sup_{u \in \mathrm{Lip}_1(Q^d, \Rr)}\Bigl|
  \int_{Q^d} u d \mu - \int_{Q^d} u d \nu \Bigr|.
  $$
  
  Now, if $u \colon (Q^d, \rho_d) \to \Rr$ is a $1$-Lipschitz map
  (with respect to $\rho_d$) then its $2\pi$-periodic extension to
  $\Rr^d$ is a $1$-Lipschitz map with respect to the euclidean norm on
  $\Rr^d$. Conversely, any $v \in \mathrm{Lip}^{2\pi}_1(\Rr^d, \Rr)$
  satisfies that $v_{|Q^d}$ is $1$-Lipschitz with respect to $\rho_d$.
  Therefore,
  $$
  \sup_{u \in \mathrm{Lip}_1(Q^d, \Rr)}\Bigl| \int_{Q^d} u d
  \mu - \int_{Q^d} u d \nu \Bigr| = \sup_{v \in
    \mathrm{Lip}^{2\pi}_1(\Rr^d, \Rr)} \Bigl|\int_{Q^d} v d
  \mu - \int_{Q^d} v d \nu \Bigr|
  $$
  and thanks to Lemma \ref{lem: lip1 smooth} and \ref{lem:
    cauchy-schwarz}, the right-hand side is bounded by
  $$
  \Bigl(\sum_{m \in \Zz^d \setminus \{0\}}^{} \frac{|\hat{\mu}(m) -
    \hat{\nu}(m)|^2}{|m|^2}\Bigr)^{1/2}.
  $$
\end{proof}

This corollary is not suitable for all applications because the series
on the right-hand side may well diverge. In order to obtain a more
useful inequality, the method is the usual application of convolution
by a measure whose sequence of Fourier coefficients is compactly
supported. Recall that if $\mu$ and $\nu$ are Borel probability
measures on $(\Rr/\Zz)^d$, the convolution $\mu * \nu$ is the unique
Borel probability measure on $(\Rr/\Zz)^d$ such that
$$
\int_{(\Rr/\Zz)^d} u d(\mu * \nu) = \int_{(\Rr/\Zz)^{2d}}
u(x+y) d \mu(x) d\nu(y)
$$
for all continuous functions $u \colon (\Rr/\Zz)^d \to \Cc$. As usual,
one can identify $\mu$, $\nu$ and $\mu*\nu$ with measures on~$Q^d$,
and then
$$
\int_{\Rr^d} u d(\mu * \nu) = \int_{\Rr^d\times\Rr^d} u(x+y)
d \mu(x) d\nu(y)
$$
for all functions $u\colon \Rr^d\to\Cc$ which are continuous and
$2 \pi$-periodic.


\begin{lemma}\label{lem: regularization cost}
  Let $\mu$ and $\nu$ be two Borel probability measures on
  $(Q^d, \rho_d)$ and let $H = (H_1, \dots, H_d)$ be a random vector
  in $\Rr^d$. For $x \in \Rr$, denote by $M(x)$ the unique element of
  $(x + 2 \pi \Zz)\cap(-\pi, \pi]$. Let $N$ be the random vector
  $(M(H_1), \dots, M(H_d))$ and let $\eta$ denote the law of $N$. Then
  $$
  \wass_1(\mu, \nu) \leqslant 	\wass_1(\mu* \eta, \nu * \eta) + 2 \expect(|H|).
  $$
\end{lemma}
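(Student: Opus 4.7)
The plan is to apply the triangle inequality for the metric $\wass_1$, writing
\[
\wass_1(\mu,\nu) \leq \wass_1(\mu, \mu*\eta) + \wass_1(\mu*\eta, \nu*\eta) + \wass_1(\nu*\eta, \nu),
\]
so the problem reduces to establishing the \emph{regularization bound} $\wass_1(\mu,\mu*\eta) \leq \expect(|H|)$ and its analogue with $\nu$ in place of $\mu$. Summing these two estimates produces the factor $2$ in front of $\expect(|H|)$.

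To prove the regularization bound, I would use the probabilistic reformulation of $\wass_1$ recalled in the remark following the definition of Wasserstein distances. Take $X \sim \mu$ with values in $Q^d$, and let $N'$ be independent of $X$ with the law $\eta$ of $N$. Viewing $Q^d$ as the quotient $\Rr^d/2\pi\Zz^d$, the pair $(X, X+N' \bmod 2\pi\Zz^d)$ is a coupling of $\mu$ and $\mu*\eta$, so that
\[
\wass_1(\mu,\mu*\eta) \leq \expect\bigl[\rho_d(X, X+N' \bmod 2\pi\Zz^d)\bigr] = \expect\bigl[\rho_d(0, N' \bmod 2\pi\Zz^d)\bigr],
\]
by the translation-invariance of $\rho_d$ on the torus.

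The key point is that the coordinates $N'_i = M(H_i)$ by construction lie in $(-\pi,\pi]$, so the circular distance on $\Rr/2\pi\Zz$ from $0$ to $N'_i$ is exactly $|N'_i|$; hence $\rho_d(0, N' \bmod 2\pi\Zz^d) = |N'|$, the Euclidean norm. Combined with the elementary pointwise bound $|M(x)| \leq |x|$ (which is immediate: if $|x|\leq \pi$ then $M(x)=x$, and if $|x|>\pi$ then $|M(x)| \leq \pi < |x|$), this yields $|N'| \leq |H|$ componentwise, so $\expect|N'| \leq \expect|H|$ and the regularization bound follows.

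There is no serious obstacle here; the argument is essentially a one-line coupling plus the triangle inequality. The only minor care required is in handling the identification of $(Q^d,\rho_d)$ with the Euclidean torus and in checking that the specific choice of representative $M(H_i) \in (-\pi,\pi]$ is exactly what is needed for the circular distance to coincide with $|M(H_i)|$.
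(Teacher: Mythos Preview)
Your proof is correct and follows the same overall skeleton as the paper's (triangle inequality plus a bound of the form $\expect(|N|)\leq\expect(|H|)$), but you work on the primal side of the Kantorovich--Rubinstein duality while the paper works on the dual side. Concretely, the paper fixes a test function $u\in\mathrm{Lip}^{2\pi}_1(\Rr^d,\Rr)$, splits $\bigl|\int u\,d\mu-\int u\,d\nu\bigr|$ by the triangle inequality, and bounds the side term $\bigl|\int u\,d\mu-\int u\,d(\mu*\eta)\bigr|$ directly from the Lipschitz condition and the definition of convolution, obtaining $\int_{Q^d}|y|\,d\eta(y)=\expect(|N|)$; it then uses $|M(x)|\leq|x|$ exactly as you do. Your coupling argument is the mirror image of this computation and is arguably a little cleaner, since it avoids invoking duality (Theorem~\ref{th-wass}, (6)) altogether.

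One small wording issue: when you write ``$|N'|\leq|H|$ componentwise'', this is only literally correct if you take $N'$ to be $N=(M(H_1),\dots,M(H_d))$ itself on a probability space carrying both $X$ and $H$ independently; if $N'$ is an abstract copy with law $\eta$, the correct phrasing is $\expect|N'|=\expect|N|\leq\expect|H|$. Either reading gives the desired bound.
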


\begin{proof}
  Let $u \in \mathrm{Lip}^{2\pi}_1(\Rr^d, \Rr)$. By the triangle
  inequality, we have
  \begin{align*}
    \Bigl| \int_{Q^d} u d\mu- \int_{Q^d} u d \nu\Bigr|  &\leqslant 	\Bigl| \int_{Q^d} u d\mu- \int_{Q^d} u d( \mu * \eta) \Bigr| + 	\Bigl| \int_{Q^d} u d(\mu * \eta)- \int_{Q^d} u d( \nu * \eta) \Bigr| \\ &  + 	\Bigl| \int_{Q^d} u d(\nu * \eta)- \int_{Q^d} u d \nu\Bigr|
  \end{align*}
  and thanks to Theorem~\ref{th-wass} (6), 
  $$
  \Bigl| \int_{Q^d} u d(\mu * \eta)- \int_{Q^d} u d( \nu * \eta) \Bigr| \leqslant 	\wass_1(\mu* \eta, \nu * \eta).
  $$
  Thus, it suffices to show that 
  $$
  \Bigl| \int_{Q^d} u d\mu- \int_{Q^d} u d( \mu * \eta) \Bigr| \leqslant \expect(|H|),
  $$
  since the same proof will also show the inequality for the term with
  $\nu$ and $\nu * \eta$. Now, by definition of the convolution and
  the fact that $\eta$ is a probability measure, we have
  \begin{align*}
    \Bigl| \int_{Q^d} u d (\mu * \eta) - \int_{Q^d} u d \mu\Bigr|
    & = \Bigl| \int_{Q^d} \Bigl( \int_{Q^d} u(x+y) - u(x) d \mu(x) \Bigr) d \eta(y) \Bigr| \\
    & \leqslant   \int_{Q^d} \Bigl( \int_{Q^d} |u(x+y) - u(x)| d \mu(x) \Bigr) d \eta(y).
  \end{align*}
  Using the fact that $u \in \mathrm{Lip}^{2\pi}_1(\Rr^d, \Rr)$, we
  deduce that
  $$
  \Bigl| \int_{Q^d} u d (\mu * \eta) - \int_{Q^d} u d \mu\Bigr| \leqslant \int_{Q^d} |y| d \eta(y) = \expect(|N|).
  $$
  Finally, since for all $x \in \Rr$, $|M(x)| \leqslant |x|$, we have $\expect(|N |) \leqslant \expect(|H|)$, hence the conclusion.
\end{proof}

\begin{lemma}\label{lem: choice of H}
  For all integers $T \geqslant 1$, there exists a random variable $H$
  with values in $\Rr^d$ whose characteristic function is supported on
  the cube $[-T,T]^d$ and such that
  $\expect(|H|) \leqslant \frac{2 \sqrt{3}\sqrt{d}}{T}$.
\end{lemma}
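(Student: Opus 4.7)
The plan is to reduce to dimension one by independence and then write down an explicit density whose characteristic function is supported in $[-T,T]$. Take $H = (H_1, \ldots, H_d)$ with i.i.d.\ real components, so that by independence $\varphi_H(s) = \prod_i \varphi_{H_1}(s_i)$ is supported in $[-T,T]^d$ precisely when $\varphi_{H_1}$ is supported in $[-T,T]$. Moreover
$$\expect(|H|) \leq \sqrt{\expect(|H|^2)} = \sqrt{d\, \expect(H_1^2)},$$
so it suffices to construct a one-dimensional random variable $H_1$ whose characteristic function is supported in $[-T,T]$ and with second moment at most $12/T^2$.

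For the one-dimensional construction, I would take $H_1$ with density
$$f(x) = \frac{3T}{8\pi} \cdot \frac{\sin^4(Tx/4)}{(Tx/4)^4}.$$
To see this works, set $\hat{f_0}(s) = (1 - 2|s|/T)_+\,\mathbf{1}_{[-T/2,T/2]}(s)$, a triangular function supported on $[-T/2,T/2]$, whose inverse Fourier transform is $f_0(x) = 4\sin^2(Tx/4)/(\pi T x^2) \geq 0$. Since $\int f_0 = \hat{f_0}(0) = 1$, the function $f_0$ is itself a probability density. A direct computation shows $f = (6\pi/T)\, f_0^2 \geq 0$ with $\int f = 1$, and the convolution theorem gives $\hat{f} = (3/T)\,\hat{f_0} * \hat{f_0}$, which is supported in $[-T, T]$.

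The final step is to compute the second moment. Substituting $u = Tx/4$ in the integral yields
$$\expect(H_1^2) = \int x^2 f(x)\,dx = \frac{24}{\pi T^2}\int_{-\infty}^{+\infty}\frac{\sin^4(u)}{u^2}\,du,$$
and the classical evaluation $\int_\Rr \sin^4(u)/u^2\,du = \pi/2$ (obtained, e.g., by integration by parts reducing to Dirichlet integrals, or from Parseval applied to $\hat{f_0}$) gives $\expect(H_1^2) = 12/T^2$. Combined with the Cauchy--Schwarz reduction, this yields $\expect(|H|) \leq \sqrt{12 d/T^2} = 2\sqrt{3}\sqrt{d}/T$, as required.

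The main subtlety is the choice of density $f$, not the bookkeeping: the naive candidate, the Fej\'er density $f_0$ whose characteristic function is already triangular on $[-T,T]$ (after rescaling), only decays like $1/x^2$ and therefore has infinite second moment. Replacing $f_0$ by its normalized square brings the tails down to $1/x^4$, which is the minimum regularity needed for a finite second moment, and the classical value $\pi/2$ of the integral conspires to yield exactly the constant $12$ in the bound.
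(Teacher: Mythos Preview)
Your proof is correct and follows essentially the same construction as the paper: both take the normalized square of the Fej\'er kernel so that the characteristic function is the self-convolution of a triangle function (hence compactly supported), and both reduce the $d$-dimensional bound to a one-dimensional second-moment computation via Cauchy--Schwarz and independence. The only cosmetic differences are that you build the scaling by~$T$ into the density from the start (the paper constructs a unit-scale variable~$\xi$ and rescales afterwards) and you evaluate $\expect(H_1^2)$ by the explicit integral $\int \sin^4(u)/u^2\,du=\pi/2$, whereas the paper reads it off as $-w''(0)$; both routes give the same value~$12/T^2$.
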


\begin{proof}
  As a first attempt (that will require some adjustments) let us use
  what is known on the Fourier transform of the triangle function: it
  is a classic fact that if $X$ is a real random variable whose
  density function is
  $$
  f(x) = \frac{2(1 - \cos(x/2))}{\pi x^2}
  $$
  then its characteristic function is given by
  $$
  \varphi_X(s) = (1 - 2 |s|)^{+}
  $$
  where $(-)^{+}$ denotes the positive part. In particular,
  $\varphi_X$ is supported on $\left[ -\frac12, \frac12 \right]$, so
  that suitable renormalizations of $X$ will allow us to construct
  random variables with support in $[-T, T]$ for all $T$. However, the
  issue is that $\expect(|X|) = + \infty$, which would make the
  inequality of Lemma \ref{lem: regularization cost} useless. This is
  why the following adjustment is needed: rather than working with
  $X$, we will work with the random variable $\xi$ whose
  characteristic function is given by
  \begin{equation} \label{eq: def v*v}
    w(s) = 3 (\varphi_X * \varphi_X) (s) = 3 \int_{\Rr} \varphi_X(s-t)\varphi_X(t) dt.
  \end{equation}
  Here, the factor $3$ is just a normalization factor to ensure that
  $w(0) = 1$, which is a necessary condition in order to be a
  characteristic function. Then thanks to the convolution theorem for
  the Fourier transform, $w$ is the characteristic function
  $\varphi_\xi$ of a random variable $\xi$ with density
  $$
  g(x) =6 \pi f(x)^2 = \frac{24 (1 - \cos(x/2))^2}{\pi x^4}\cdot
  $$
  Since $w = 3(\varphi_X * \varphi_X)$, it is supported on $[-1,1]$,
  and this time $\expect(|\xi|) < + \infty$. Even better,
  $\expect(\xi^2)$ is also finite, and can be explicitly computed!
  Indeed, since $w$ is the characteristic function of $\xi$, the
  second moment of $\xi$ is equal to $-w''(0)$, and this can be
  calculated from \eqref{eq: def v*v}, yielding $\expect(\xi^2) =
  12$. To conclude, it suffices to define $H$ as
  $\frac{1}{T}(\xi_1, \dots, \xi_d)$ for independent random variables
  $\xi_i$ having the same distribution as $\xi$. Indeed,
  $\expect(|H|^2) = \frac{12d}{T^2}$ and
  $(\expect|H|)^2 \leqslant \expect(|H|^2)$ so
  $\expect(|H|) \leqslant \frac{2 \sqrt{3}\sqrt{d}}{T}$. Moreover, for all
  $s =(s_1, \dots, s_d) \in \Rr^d$,
  $$
  \varphi_H(s) = \prod_{j = 1}^{d} w(s_j / T)
  $$
  so the choice of a $w$ supported on $[-1, 1]$ implies that
  $\varphi_H$ is supported on $[-T,T]^d$.
\end{proof}

\begin{rem}
  Among all random vectors $H$ such that $\varphi_H$ is supported on
  $[-T,T]^d$, what is the lowest $\expect(|H|)$ one can hope for? In
  this remark, we show that the result of Lemma \ref{lem: choice of H}
  is close to optimal.

  Let us denote by $(e_1, \dots, e_d)$ the canonical basis of $\Rr^d$ and by
  $$
  \begin{array}{ccccc}
    \varphi_H & : & \Rr^d & \to & \Cc \\
              && t = (t_1, \dots, t_d) & \mapsto & \expect(e^{i t \cdot H})
  \end{array}
  $$ 
  the characteristic function of $H$. Then for all $j \in \{1, \dots, d\}$,
  $$
  \frac{\partial \varphi_H}{\partial t_j}(t) = i \expect(H_j e^{i t
    \cdot H}).
  $$
  Therefore, if $\varphi_H$ is supported on $[-T,T]^d$, we have $\varphi_H(Te_j) = 0$, hence
  $$
  1 = \Bigl| \varphi_H(Te_j) - \varphi_H(0)\Bigr| = \Bigl|
  \int_{0}^{T} \frac{\partial \varphi_H}{\partial t_j}(se_j)ds \Bigr|
  \leqslant T \expect(|H_j|)
  $$
  thanks to the triangle inequality. Summing over $j$ and using Cauchy--Schwarz inequality yields
  $$d \leqslant T  \expect(|H_1| + \cdots + |H_d|) \leqslant T \sqrt{d} \expect(|H|)$$ 
  hence 
  $$
  \frac{\sqrt{d}}{T} \leqslant  \expect(|H|).
  $$
  Therefore the construction of $H$ in Lemma \ref{lem: choice of H}
  gives the best possible dependence with respect to $T$ and $d$. The
  only question that remains is whether one can obtain
  $\expect(|H|) = c \frac{\sqrt{d}}{T}$ for some
  $1 \leqslant c < 2\sqrt{3}$.
\end{rem}

\begin{proof}[Proof of Theorem $\ref{th-wass}, (7)$]
  Let $T \geqslant 1$ and let $H$ be a random vector as given by the
  previous lemma. If $\mu$ and $\nu$ are two Borel measures on
  $(Q^d, \rho_d)$, then thanks to Lemma \ref{lem: regularization cost}
  we have
  $$
  \wass_1(\mu, \nu) \leqslant 	\wass_1(\mu* \eta, \nu * \eta) + 2 \expect(|H|) 
  $$
  The choice of $H$ ensures that $\expect(|H|) \leqslant \frac{2 \sqrt{3}\sqrt{d}}{T}$, so it just remains to prove that 
  $$
  \wass_1(\mu* \eta, \nu * \eta) \leqslant \Bigl(\sum_{\substack{m \in \Zz^d \\ 0 < \|m\|_{\infty} \leqslant T}} \frac{|\hat{\mu}(m) - \hat{\nu}(m)|^2}{|m|^2}\Bigr)^{1/2}.
  $$
  This follows from Corollary \ref{lem: without smoothing} and the
  fact that $\hat{\eta}(m) = 0$ for all $m \notin [-T,T]^d$.
\end{proof}

\bibliography{wasserstein}

\end{document}